    \numberwithin{equation}{section}   
    \crefname{proposition}{Proposition}{Propositions}   
    \crefname{const}{Construction}{Constructions}
    \crefname{subsection}{Subsection}{Subsections}   
    \crefname{subthm}{Theorem}{Theorems}   
    \crefname{sublem}{Lemma}{Lemmas}   
    \crefname{subprop}{Proposition}{Propositions}   
    \crefname{subcor}{Corollary}{Corollaries}   
\DeclarePairedDelimiter{\nparens}{\lparen}{\rparen} 
\newcommand{\parens}{\nparens*} 
\newcommand{\bigparens}{\nparens[\big]}
\newcommand{\Biggparens}{\nparens[\Bigg]}
\DeclarePairedDelimiter{\angles}{\langle}{\rangle}
\DeclarePairedDelimiter{\verts}{\lvert}{\rvert}
\DeclarePairedDelimiter{\Verts}{\lVert}{\rVert}
\DeclarePairedDelimiter{\lbrackrparen}{\lbrack}{\rparen}
\newcommand*{\bverts}[1]{\left\lvert #1 \right\rvert} 
\newcommand*{\blparenrbrack}[1]{\left\lparen #1 \right\rbrack} 
\newcommand{\ncard}{\verts} 
\newcommand{\card}{\ncard*} 
\newcommand{\inner}{\angles*}
\newcommand{\norm}{\Verts*}
\newcommand{\ropen}{\lbrackrparen*} 
\newcommand{\blopen}{\blparenrbrack} 
\newcommand{\st}{} 
\newcommand{\nst}[1][]{
    \mathchoice{\;}{\;}{\,}{\,} 
    \mathord{:}    
    \allowbreak 
    \mathchoice{\;}{\;}{\,}{\,} 
    \mathopen{} 
}%
\DeclarePairedDelimiterX{\nsetof}[1]{\lbrace}{\rbrace}{
    \renewcommand{\st}{\nst[\delimsize]} 
    #1
}
\newcommand{\setof}{\nsetof*} 
\newcommand{\bigsetof}{\nsetof[\big]}
\newcommand{\defing}[1]{\textup{\textbf{#1}}} 
\newcommand{\Z}{\mathbb{Z}} 
\newcommand{\Znn}{\Z_{\ge 0}} 
\newcommand{\Zp}{\Z_{\ge 1}} 
\newcommand{\Q}{\mathbb{Q}} 
\newcommand{\R}{\mathbb{R}} 
\newcommand{\Rnn}{\R_{\ge 0}} 
\newcommand{\GL}{\mathrm{GL}} 
\newcommand*{\Zmod}[1]{\Z/#1\Z} 
\newcommand{\Zmodn}{\Zmod{n}} 
\newcommand*{\bnd}[1]{\partial #1}   
\newcommand*{\intr}[1]{#1^{\circ}}   
\newcommand*{\minusset}[1]{\setminus \setof{#1}} 
\DeclareMathOperator{\conv}{conv} 
\newcommand{\deftobe}{\coloneq} 
\newcommand{\betodef}{\eqcolon} 
\newcommand{\divides}{\mathrel{\smash{\big\vert}}} 
\newcommand{\notdivides}{\centernot\divides}
\newcommand{\maps}{\colon} 
\newcommand{\limp}{\Longrightarrow} 
\newcommand{\lif}{\Longleftarrow} 
\newcommand{\liff}{\Longleftrightarrow} 
\newcommand{\resto}[2]{{
   \left.\kern-\nulldelimiterspace 
   #1 
   \right\vert_{#2} 
}}
\DeclareFontFamily{U}{mathb}{\hyphenchar\font45}
\DeclareFontShape{U}{mathb}{m}{n}{
      <5> <6> <7> <8> <9> <10> gen * mathb
      <10.95> mathb10 <12> <14.4> <17.28> <20.74> <24.88> mathb12
      }{}
\DeclareSymbolFont{mathb}{U}{mathb}{m}{n}
\DeclareMathSymbol{\lefttorightarrow}      {3}{mathb}{"FC}
\renewcommand{\emptyset}{\varnothing}
\renewcommand{\epsilon}{\varepsilon}
\renewcommand{\subset}{\subseteq}
\renewcommand{\bar}{\overline}
\renewcommand{\setminus}{\smallsetminus}
\renewcommand{\phi}{\varphi}   
\newcommand*{\ehr}[1]{\operatorname{ehr}_{#1}}   
\newcommand*{\dualp}[1]{#1^{\vee}}   
\newcommand*{\bndp}[1]{\mathfrak{b}_{#1}}   
\newcommand*{\intp}[1]{\mathfrak{i}_{#1}}   
\DeclareMathOperator{\aff}{aff}   
\DeclareMathOperator{\den}{den}   
\newcommand*{\ldist}[1]{\beta_{#1}}   
\newcommand*{\llength}{\bverts}   
\newcommand{\F}{\mathcal{F}}   
\newcommand{\Fi}{\intr{\F}}   
\newcommand{\E}{\mathcal{E}}   
\newcommand{\Ei}{\intr{\E}}   
\newcommand{\B}{\mathcal{B}}   
\DeclareMathOperator{\relvol}{relvol}   
\DeclareMathOperator{\vertx}{vert}   
\newcommand{\RR}{\mathcal{R}}   
\newcommand{\RRbar}{\bar{\RR}}   
\newcommand{\area}[1]{\operatorname{area}(#1)}   
\newcommand*{\prp}[1]{#1^{\perp}}   
\newcommand*{\dett}[1]{d_{#1}}   
\newcommand*{\PIP}[3]{\smash{P^{#1}_{#2, #3}}} 
\newcommand{\SSS}{\mathscr{S}}   
\newcommand{\T}{\mathscr{T}}   
\newcommand{\rinlinematrix}[4]{%
    \begin{bsmallmatrix*}[r]
         #1 & #2 \\
         #3 & #4
    \end{bsmallmatrix*}
}%
\newcommand{\cinlinematrix}[4]{%
    \begin{bsmallmatrix*}[c]
         #1 & #2 \\
         #3 & #4
    \end{bsmallmatrix*}
}%
\newcolumntype{L}{>{$}l<{$}} 
\newcolumntype{C}{>{$}c<{$}} 
\newcolumntype{R}{>{$}r<{$}} 
\theoremstyle{definition}   
\newtheorem{theorem}{Theorem}[section]   
\newtheorem*{theorem*}{Theorem}
\newtheorem{lemma}[theorem]{Lemma}
\newtheorem*{lemma*}{Lemma}
\newtheorem{corollary}[theorem]{Corollary}
\newtheorem{proposition}[theorem]{Proposition}
\newtheorem*{proposition*}{Proposition}
\theoremstyle{definition}
\newtheorem{example}[theorem]{Example}
\newtheorem{definition}[theorem]{Definition}
\newtheorem*{definition*}{Definition}
\newtheorem*{correct}{CORRECTION}
\theoremstyle{remark}
\newtheorem{remark}[theorem]{Remark}
\newtheorem*{claim*}{Claim}
\newenvironment{enumeratetfae}{%
    \begin{enumerate}[label=(\roman*)]
}{%
    \end{enumerate}
}%
\tikzset{%
    every picture/.style={scale=1},
    blackpoint/.style={circle, inner xsep=1.5pt, inner ysep=0pt, fill=black},
    graypoint/.style={circle, inner xsep=1.5pt, inner ysep=0pt, fill=gray},
    axisstyle/.style={-latex, line width = 1pt},
    arrowstyle/.style={-latex, line width = 1pt},
    polygonstyle/.style={fill=black!25, thick, rounded corners=0.001pt},
    segmentstyle/.style={thick, rounded corners=0.001pt},
}
\newcommand*{\drawlattice}[4]{%
    \begin{pgfonlayer}{lattice}
        \begin{scope}
            \foreach \x in {#1,...,#2}{%
            \foreach \y in {#3,...,#4}{%
                \draw (\x,\y) node[blackpoint]{};
            }}%
        \end{scope}
    \end{pgfonlayer}
}%
\newcommand*{\drawlatticegrid}[5]{%
\begin{pgfonlayer}{grid}
    \draw[step=1/#5, gray, very thin] (#1, #3) grid(#2, #4);
\end{pgfonlayer}
    \begin{pgfonlayer}{lattice}
        \begin{scope}
            \foreach \x in {#1,...,#2}{%
            \foreach \y in {#3,...,#4}{%
                \draw (\x,\y) node[blackpoint]{};
            }}%
        \end{scope}
    \end{pgfonlayer}
}%
\newcommand*{\drawtriangle}[3]{%
    \filldraw[polygonstyle] #1 -- #2 -- #3 -- cycle;
}%
\newcommand*{\drawfourgon}[4]{%
    \filldraw[polygonstyle] #1 -- #2 -- #3 -- #4 -- cycle;
}%
\newcommand*{\drawfivegon}[5]{%
    \filldraw[polygonstyle] #1 -- #2 -- #3 -- #4 -- #5 -- cycle;
}%
\newcommand*{\drawsixgon}[6]{%
    \filldraw[polygonstyle] #1 -- #2 -- #3 -- #4 -- #5 -- #6 -- cycle;
}%
\title{%
    Boundaries of pseudointegral polygons%
}%
\author[T.~B.~McAllister]
{%
    Tyrrell B. McAllister%
}%
\address
{%
    Mathematics and Statistics \\
    University of Wyoming \\
    Laramie, WY 82071 \\
    USA%
}%
\email
{%
    tmcallis@uwyo.edu%
}%
\author[J.~S~Williford]
{%
    Jason S. Williford%
}%
\address
{%
    Mathematics and Statistics \\
    University of Wyoming \\
    Laramie, WY 82071 \\
    USA%
}%
\email
{%
    jwillif1@uwyo.edu%
}%
\keywords{%
    Ehrhart function, pseudointegral polytope, period collapse%
}%
\subjclass{%
    Primary %
    52C05, 11H06%
      ;
    Secondary %
    05A15, 52B45%
}%
\date{}
\begin{document}

\begin{abstract}
    We prove that a rational pseudointegral triangle with exactly one
    lattice point in its interior has at most $9$ lattice points on
    its boundary, where a polygon $P$ is called pseudointegral if the
    Ehrhart function of $P$ is a polynomial.  We further show that
    such a triangle never has exactly~$7$ lattice points on its
    boundary.  Our results determine the set of all Ehrhart
    polynomials of rational triangles with one interior lattice point.
    
    In addition, we construct convex pseudointegral polygons with~$i$
    interior lattice points and~$b$ boundary lattice points for all
    positive integral values of $(i,b)$ such that $b \le 5i + 4$.
    This is in contrast to integral polygons, which must satisfy $b
    \le 2i + 7$ by a result of Scott.  Our constructions yield many
    new Ehrhart polynomials of rational polygons in the $i \ge 2$
    case.
\end{abstract}

%

\maketitle

\section{Introduction}

A convex polytope $P \subset \R^{d}$ is \defing{reflexive} if both $P$
and its dual are integral polytopes.  In dimension~$2$, the reflexive
polygons are easy to recognize on sight, because they are precisely
the convex integral polygons in which the origin is the only interior
lattice point.  With this criterion, it is straightforward to show by
case analysis that there are precisely $16$ reflexive polygons, up to
automorphisms of the integer lattice.  See \cref{fig:reflexive
polygons} for representatives of these $16$~reflexive polygons.  One
observes that every reflexive polygon has at most~$9$ lattice points
on its boundary.  It follows that every convex integral polygon with
exactly one lattice point in its interior has at most~$9$ lattice
points on its boundary.

\begin{figure}
    \newcommand{\cax}{1}
    \newcommand{\cay}{1}
    \newcommand{\cbx}{5}
    \newcommand{\cby}{1}
    \newcommand{\ccx}{9}
    \newcommand{\ccy}{1}
    \newcommand{\cdx}{13}
    \newcommand{\cdy}{1}
    \newcommand{\cex}{16}
    \newcommand{\cey}{1}
    \newcommand{\cfx}{1}
    \newcommand{\cfy}{4}
    \newcommand{\cgx}{4}
    \newcommand{\cgy}{4}
    \newcommand{\chx}{7}
    \newcommand{\chy}{4}
    \newcommand{\cix}{10}
    \newcommand{\ciy}{4}
    \newcommand{\cjx}{13}
    \newcommand{\cjy}{4}
    \newcommand{\ckx}{16}
    \newcommand{\cky}{4}
    \newcommand{\clx}{1}
    \newcommand{\cly}{7}
    \newcommand{\cmx}{4}
    \newcommand{\cmy}{7}
    \newcommand{\cnx}{7}
    \newcommand{\cny}{7}
    \newcommand{\cox}{10}
    \newcommand{\coy}{7}
    \newcommand{\cpx}{15}
    \newcommand{\cpy}{7}
    \resizebox{\textwidth}{!}{
        \begin{tikzpicture}
            \drawlattice{0}{17}{0}{8}
            \drawtriangle{(\cax-1,\cay-1)}{(\cax+2,\cay-1)}{(\cax-1,\cay+2)}
            \drawfourgon{(\cbx-1,\cby-1)}{(\cbx+2,\cby-1)}{(\cbx,\cby+1)}{(\cbx-1,\cby)}
            \drawtriangle{(\ccx-1,\ccy-1)}{(\ccx+2,\ccy-1)}{(\ccx+1,\ccy+1)}
            \drawfourgon{(\cdx-1,\cdy-1)}{(\cdx+1,\cdy-1)}{(\cdx+1,\cdy)}{(\cdx,\cdy+1)}
            \drawfourgon{(\cex-1,\cey)}{(\cex,\cey-1)}{(\cex+1,\cey)}{(\cex,\cey+1)}
            \drawsixgon{(\cfx-1,\cfy)}{(\cfx,\cfy-1)}{(\cfx+1,\cfy-1)}{(\cfx+1,\cfy)}{(\cfx,\cfy+1)}{(\cfx-1,\cfy+1)}
            \drawfivegon{(\cgx-1,\cgy)}{(\cgx,\cgy-1)}{(\cgx+1,\cgy-1)}{(\cgx+1,\cgy)}{(\cgx,\cgy+1)}
            \drawfivegon{(\chx-1,\chy-1)}{(\chx,\chy-1)}{(\chx+1,\chy)}{(\chx+1,\chy+1)}{(\chx-1,\chy+1)}
            \drawfourgon{(\cix-1,\ciy-1)}{(\cix+1,\ciy-1)}{(\cix+1,\ciy+1)}{(\cix,\ciy+1)}
            \drawfivegon{(\cjx-1,\cjy-1)}{(\cjx+1,\cjy-1)}{(\cjx+1,\cjy)}{(\cjx,\cjy+1)}{(\cjx-1,\cjy)}
            \drawfourgon{(\ckx-1,\cky-1)}{(\ckx+1,\cky-1)}{(\ckx+1,\cky+1)}{(\ckx-1,\cky+1)}
            \drawtriangle{(\clx-1,\cly-1)}{(\clx+1,\cly)}{(\clx,\cly+1)}
            \drawtriangle{(\cmx-1,\cmy+1)}{(\cmx+1,\cmy+1)}{(\cmx,\cmy-1)}
            \drawfourgon{(\cnx-1,\cny)}{(\cnx,\cny-1)}{(\cnx+1,\cny-1)}{(\cnx,\cny+1)}
            \drawfourgon{(\cox-1,\coy-1)}{(\cox+2,\coy-1)}{(\cox,\coy+1)}{(\cox-1,\coy+1)}
            \drawtriangle{(\cpx-2,\cpy+1)}{(\cpx+2,\cpy+1)}{(\cpx,\cpy-1)}
        \end{tikzpicture}
    }
    \caption{%
        The $16$ reflexive polygons (adapted from \cite{HaaSch2009}).
    }%
    \label{fig:reflexive polygons}
\end{figure}
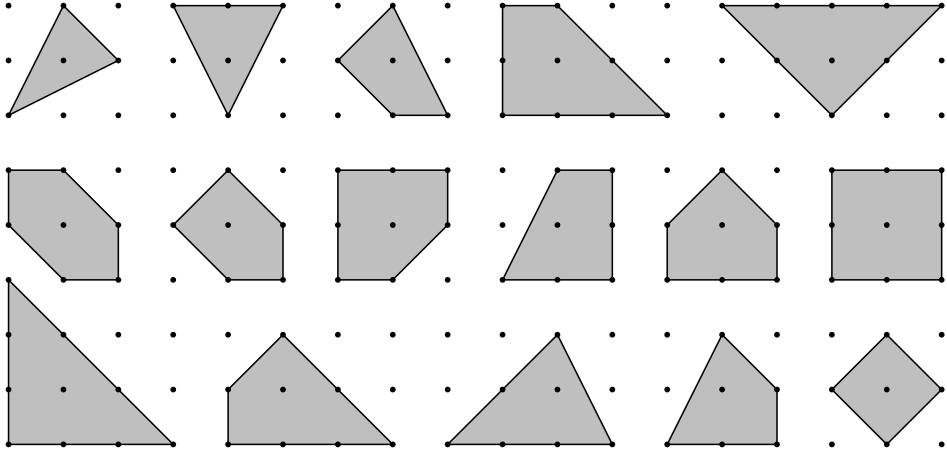

Our first main result (\cref{thm: first main result} below) provides
conditions under which the hypothesis of \emph{integrality} in this
upper bound may be weakened to \emph{pseudointegrality}.  A
\defing{pseudointegral polytope}, or \defing{PIP} for short, is a
polytope $P \subset \R^{d}$ whose \defing{Ehrhart function} $t \mapsto
\card{tP \cap \Z^{d}}$ is a polynomial function of $t \in \Zp$, where
$tP \deftobe \setof{ta \st a \in P}$.  In particular, all integral
polytopes are pseudointegral by a famous result of Ehrhart.  (We
review Ehrhart theory in \cref{sec: preliminaries}.)  We show that the
upper bound of at most~$9$ boundary lattice points holds for all
rational pseudo\-integral triangles that contain exactly one interior
lattice point.  More precisely, we prove the \mbox{following}.

\begin{theorem}\label{thm: first main result}
    Let $T \subset \R^{2}$ be a rational pseudointegral triangle that
    contains exactly one lattice point in its interior, and let $b$ be
    the number of lattice points on the boundary of $T$.  Then ${1 \le
    b \le 9}$, but $b \ne 7$.
\end{theorem}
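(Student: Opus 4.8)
The plan is to reduce the theorem to a statement about the area of $T$, and then to use the rigidity of the pseudointegral hypothesis to control that area.

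\noindent\emph{Step 1: reduction to a statement about the area.}
Since $T$ is a PIP, we may write $\ehr{T}(t) = \area{T}\,t^{2} + c_{1}t + c_{0}$; its constant term satisfies $c_{0} = \ehr{T}(0) = 1$, as holds for the Ehrhart (quasi)polynomial of any rational polytope. Because $\ehr{T}$ is a polynomial, Ehrhart--Macdonald reciprocity forces $\ehr{\intr{T}}(t) = \ehr{T}(-t) = \area{T}\,t^{2} - c_{1}t + 1$ to be a polynomial as well. Evaluating both at $t = 1$ and using $\card{\intr{T} \cap \Z^{2}} = 1$ and $\card{T \cap \Z^{2}} = 1 + b$ gives $\area{T} - c_{1} + 1 = 1$ and $\area{T} + c_{1} + 1 = 1 + b$, so
\[
    c_{1} = \area{T} = \tfrac{b}{2}
    \quad\text{and}\quad
    \ehr{T}(t) = \tfrac{b}{2}(t^{2}+t) + 1 .
\]
In particular $b \ge 1$ (as $\area{T} > 0$), and $b$ both determines and is determined by $\ehr{T}$; the theorem is equivalent to the two assertions $\area{T} \le \tfrac{9}{2}$ and $\area{T} \ne \tfrac{7}{2}$.

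\noindent\emph{Step 2: the bound $\area{T} \le \tfrac{9}{2}$.}
Translate so that the unique interior lattice point is the origin $O$, and write $T = \setof{x \st \inner{n_{k},x} \le d_{k}}_{k=1,2,3}$ with each $n_{k} \in \Z^{2}$ primitive and $d_{k} \in \Qp$ the lattice distance from $O$ to the edge $E_{k}$, so that $\area{T} = \tfrac{1}{2}\sum_{k} d_{k}\ell_{k}$, where $\ell_{k}$ is the lattice length of $E_{k}$. I would first bound the $\ell_{k}$: a lattice line parallel to $E_{k}$ at an integer level strictly between $O$ (level $0$) and $E_{k}$ (level $d_{k}$) meets $T$ in a chord whose relative interior lies in $\intr{T}$, and hence contains no lattice point, so that chord has lattice length $< 1$; similar triangles then bound $\ell_{k}$ in terms of $d_{k}$ and the height of the vertex opposite $E_{k}$, with analogous estimates coming from the far side of $O$ and from the chord of $T$ through $O$. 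These estimates control $\area{T}$ except when $T$ is narrow in some lattice direction, which forces the denominator $q$ of $T$ to be large; and this is exactly where pseudointegrality enters, since a rational triangle that is too thin relative to its one interior lattice point has an Ehrhart function with a nonzero periodic part and so is not a PIP. Once $q$ is bounded, applying the chord estimates to the lattice triangle $qT$ bounds $\area{qT} = q^{2}\area{T}$, leaving finitely many unimodular classes, and a direct check gives $\area{T} \le \tfrac{9}{2}$. In the subcase $q = 1$ this is precisely Scott's inequality $b \le 2 \cdot 1 + 7 = 9$, with equality only for $T$ unimodularly equivalent to $3\Delta \deftobe \conv\setof{(0,0),(3,0),(0,3)}$. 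I expect the quantitative bound on the denominator $q$ to be the main obstacle.

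\noindent\emph{Step 3: the exclusion $\area{T} \ne \tfrac{7}{2}$, i.e.\ $b \ne 7$.}
Suppose $b = 7$, so $\ehr{T}(t) = \tfrac{7}{2}(t^{2}+t) + 1$, and let $q$ be the denominator of $T$. Then the lattice triangle $qT$ has $\ehr{qT}(t) = \ehr{T}(qt) = \tfrac{7}{2}q^{2}t^{2} + \tfrac{7}{2}qt + 1$, hence twice its area equals $7q^{2}$ and it has exactly $7q$ boundary lattice points; moreover $qT$ meets $q\Z^{2}$ in its interior in exactly one point. I would obtain a contradiction by placing an edge of $qT$ on the $x$-axis, from $(0,0)$ to $(\ell,0)$ with $\ell \mid 7q^{2}$ its lattice length and the third vertex at integer height $h = 7q^{2}/\ell$: the remaining two edges then carry $\gcd(a,h)$ and $\gcd(a-\ell,h)$ lattice segments for a suitable integer $a$, and the resulting boundary count $\ell + \gcd(a,h) + \gcd(a-\ell,h)$ cannot equal $7q$. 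For $q = 1$ this is immediate ($\ell \in \setof{1,7}$ forces the boundary count into $\setof{3,9}$); for the remaining bounded range of $q$ from Step 2, one supplements the divisibility bookkeeping with the single-interior-point condition on $q\Z^{2}$. Hence $b \ne 7$, and altogether $1 \le b \le 9$ with $b \ne 7$.

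\noindent Together with Step 1 and the constructions elsewhere in the paper realizing each value $b \in \setof{1,2,3,4,5,6,8,9}$, this also pins down the set of Ehrhart polynomials of rational triangles with one interior lattice point, namely $\setof{\tfrac{b}{2}(t^{2}+t) + 1 \st b \in \setof{1,2,3,4,5,6,8,9}}$.
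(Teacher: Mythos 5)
Your Step 1 is correct and agrees with what the paper extracts from \cref{thm: PIP criteria}: for a triangular PIP $T$ with $\intp{T}=1$, the theorem is equivalent to $\area{T}\le\tfrac{9}{2}$ and $\area{T}\ne\tfrac{7}{2}$. The gap is in Steps 2 and 3, both of which hinge on first bounding the denominator $q$ of $T$ (``Once $q$ is bounded, \dots leaving finitely many unimodular classes''; ``for the remaining bounded range of $q$ from Step 2''). No such bound exists: \cref{thm: Infinitely many triangular PRPs}, realized concretely by the Fibonacci family in \cref{exm: Fibonacci PRPs}, produces rational pseudointegral triangles with exactly one interior lattice point and arbitrarily large denominator. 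So your claimed implication ``too thin relative to its one interior lattice point $\Rightarrow$ nonzero periodic part'' cannot be upgraded to ``pseudointegral with one interior point $\Rightarrow$ bounded denominator'', and the planned reduction to finitely many unimodular classes of lattice triangles $qT$ collapses; Step 3's divisibility bookkeeping on $qT$ fails for the same reason. The chord estimates themselves are also only sketched, and as stated they control $\area{T}$ precisely outside the case that carries all the difficulty.

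The paper avoids the denominator entirely. After translating the interior lattice point to the origin, it shows (\cref{prop: characterization of PIPS with i = 1}) that pseudointegrality together with $\intp{T}=1$ forces every edge to lie at lattice distance exactly $1$ from the origin --- this is where the PIP hypothesis does its work, by comparing the leading coefficient of $\ehr{T}$ with that of the decomposition of $T$ into partially open triangles over its edges. Writing $x,y,z$ for the pairwise determinants of the primitive outer edge normals (positive integers by convexity), one then obtains the closed form $b=(x+y+z)^{2}/(xyz)$ (\cref{prop: b for triangular PRPs}), and the theorem reduces to the purely arithmetic \cref{lem: number-theory lemma for triangular PRPs}, proved by Vieta jumping. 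If you want to rescue your outline, you need a substitute for the denominator bound; the lattice-distance-one statement is the natural candidate, and once you have it you are essentially led back to the paper's computation.
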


It remains open whether every pseudointegral $n$-gon with $n \ge 4$
has at most~$9$ boundary lattice points.  We prove \cref{thm: first
main result} by reducing it to the following purely number-theoretic
statement, which may be of independent interest.

\begin{lemma}\label{lem: number-theory lemma for triangular PRPs}
    Let $x, y, z \in \Zp$ and $b \deftobe \frac{(x+y+z)^{2}}{xyz}$.
    If $b \in \Z$, then $b \le 9$ and $b \ne 7$.
\end{lemma}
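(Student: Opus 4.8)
The equation $(x+y+z)^{2} = bxyz$ is a Markov-type Diophantine equation, and the plan is to attack it by Vieta jumping, descending to a minimal solution. Fix $b \in \Z$ and let $\SSS$ denote the set of all triples $(p,q,r) \in \Zp^{3}$ with $(p+q+r)^{2} = bpqr$. By hypothesis $(x,y,z) \in \SSS$, so $\SSS$ is nonempty; since $(p,q,r) \mapsto p+q+r$ takes positive integer values on $\SSS$, I may choose $(x_0, y_0, z_0) \in \SSS$ minimizing $x_0 + y_0 + z_0$, and after relabeling the coordinates I may assume $x_0 \le y_0 \le z_0$. Because $(x_0,y_0,z_0) \in \SSS$ we have $b = \frac{(x_0+y_0+z_0)^{2}}{x_0 y_0 z_0}$, so it suffices to prove the claimed bounds for this minimal triple.

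The crucial step is the jump. Regarding the defining relation as a monic quadratic in the last coordinate, $z_0$ is a root of $t^{2} - (bx_0y_0 - 2x_0 - 2y_0)\,t + (x_0+y_0)^{2} = 0$, and its conjugate root is $z_0' \deftobe \frac{(x_0+y_0)^{2}}{z_0}$. This $z_0'$ is positive, and it is an integer because $z_0 + z_0' = bx_0y_0 - 2x_0 - 2y_0 \in \Z$ (the only place the hypothesis $b \in \Z$ is used) while $z_0 \in \Z$. Hence $(x_0, y_0, z_0') \in \SSS$, and minimality of the sum forces $z_0' \ge z_0$; together with $z_0 z_0' = (x_0+y_0)^{2}$ this gives $z_0^{2} \le (x_0+y_0)^{2}$, i.e.
\[
    1 \le x_0 \le y_0 \le z_0 \le x_0 + y_0 .
\]

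From here it is a routine optimization. A short computation shows that on the interval $[y_0,\, x_0+y_0]$ the function $z \mapsto \frac{(x_0+y_0+z)^{2}}{z}$ is non-increasing (its derivative has the same sign as $z - x_0 - y_0 \le 0$), so it is maximized at $z = y_0$; this gives $b \le \frac{(x_0+2y_0)^{2}}{x_0 y_0^{2}}$. Likewise, on $[1,\, y_0]$ the function $x \mapsto \frac{(x+2y_0)^{2}}{x}$ is non-increasing (its derivative has the sign of $x - 2y_0 < 0$), so it is maximized at $x = 1$, giving
\[
    b \;\le\; \frac{(1+2y_0)^{2}}{y_0^{2}} \;=\; \Bigparens{2 + \tfrac{1}{y_0}}^{2} \;\le\; 9 ,
\]
since $y_0 \ge 1$. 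Finally, if $b = 7$ then $\bigparens{2 + 1/y_0}^{2} \ge 7$, which forces $y_0 = 1$ (for $y_0 \ge 2$ the left-hand side is at most $(5/2)^{2} < 7$); then $x_0 \le y_0 = 1$ forces $x_0 = 1$, and $y_0 \le z_0 \le x_0 + y_0$ forces $z_0 \in \{1,2\}$. But the triples $(1,1,1)$ and $(1,1,2)$ give $b = 9$ and $b = 8$ respectively, contradicting $b = 7$; hence $b \ne 7$.

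The only subtle point in this argument is the jump itself — one must verify that the conjugate root $z_0'$ is again a positive integer (positivity is automatic, and integrality is exactly where $b \in \Z$ enters) and that minimality then yields the key inequality $z_0 \le x_0 + y_0$. Once that inequality is in hand, the bound $b \le 9$ and the exclusion of $b = 7$ follow from elementary estimates, so I expect no further obstacle.
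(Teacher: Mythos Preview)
Your proof is correct and follows essentially the same approach as the paper: both use Vieta jumping to reduce to a solution with $x_0 \le y_0 \le z_0 \le x_0 + y_0$ (the paper iterates the jump explicitly, you pick a sum-minimizing triple, but the endpoint is the same ``Vieta-reduced'' inequality), and then both finish with elementary estimates. Your monotonicity-in-each-variable bound is a slightly cleaner packaging than the paper's factorization-and-case-split, but the substance is identical.
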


The reduction of \cref{thm: first main result} to \cref{lem:
number-theory lemma for triangular PRPs} appears in \cref{sec:
reduction to number-theoretic lemma}.  The proof of \cref{lem:
number-theory lemma for triangular PRPs}, which completes the proof of
\cref{thm: first main result}, is in \cref{sec: Number-theoretic
lemmas for PRPs}.  A~generalization of \cref{lem: number-theory lemma
for triangular PRPs} is given in the appendix.

Conversely, in our second main result, we construct infinitely many
pseudo\-integral triangles with one interior lattice point and $b$
boundary lattice points for each value of $b$ allowed by \cref{thm:
first main result}.  This result answers a question of Bohnert
\cite[Remark~5.9]{Boh2024preprint}.  Recall that the
\defing{denominator} of a rational polytope $P$ is the minimum
positive integer $k$ such that $kP$ is integral.

\begin{theorem}\label{thm: Infinitely many triangular PRPs}
    Let $b \in \Z$ be such that $1 \le b \le 9$ and $b \ne 7$.  Then,
    for all $k \in \Z$, there exists a rational pseudointegral
    triangle with denominator $> k$ that has exactly one interior
    lattice point and $b$ boundary lattice points.
\end{theorem}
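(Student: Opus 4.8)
The plan is to realize each admissible value of $b$ as the doubled area of the anticanonical polytope of a suitable (possibly fake) weighted projective plane, and to push the denominator past any prescribed $k$ by running Vieta jumping on the Markov-type equation $\parens{x+y+z}^2 = b\,xyz$ governing \cref{lem: number-theory lemma for triangular PRPs}. The first step is a reduction: if $P \subset \R^2$ is any rational pseudointegral polygon with $i$ interior and $b$ boundary lattice points, then $L_P(1) = i+b$, Ehrhart--Macdonald reciprocity gives $L_P(-1) = i$, and the leading coefficient of the polynomial $L_P$ is $\area{P}$; these force $L_P(t) = \area{P}\,t^2 + \tfrac b2 t + 1$ with $\area{P} = i + \tfrac b2 - 1$. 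Hence it suffices to produce, for each admissible $b$ and each $k$, a rational triangle of denominator $> k$ with exactly one interior lattice point, exactly $b$ boundary lattice points, and a polynomial lattice-point counting function; pseudointegrality and the prescribed values of $i,b$ are then automatic.

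The triangles will come from the following construction. Given $x,y,z \in \Zp$ with $g \deftobe \gcd(x,y,z)$, choose primitive vectors $v_1,v_2,v_3 \in \Z^2$, no two parallel, with $\tfrac{x}{g}v_1 + \tfrac{y}{g}v_2 + \tfrac{z}{g}v_3 = 0$ and $\indx{\Z^2}{\Z v_1 + \Z v_2 + \Z v_3} = g$ (one checks such $v_i$ exist; they are the rays of a fan of a fake weighted projective plane with weights $(x/g,y/g,z/g)$). Since the $v_i$ then positively span $\R^2$, the polygon
\[
    P(v_1,v_2,v_3) \deftobe \setof{ m \in \R^2 \st \inner{m,v_i} \ge -1 \text{ for } i = 1,2,3 }
\]
is a rational triangle containing the origin in its interior, and a determinant computation from the syzygy yields $2\area{P(v_1,v_2,v_3)} = \parens{x+y+z}^2/(xyz)$. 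The vertex of $P$ opposite the facet $\inner{m,v_k} = -1$ equals $\det(v_i,v_j)^{-1}$ times an integer vector, so the denominator of $P$ divides $\lcm\parens{\card{\det(v_1,v_2)},\card{\det(v_2,v_3)},\card{\det(v_3,v_1)}}$; choosing the $v_i$ so that each difference $v_i - v_j$ is primitive makes the denominator grow without bound as $\max(x/g,y/g,z/g) \to \infty$.

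Next, the number theory. For $b \in \{5,6,8,9\}$ the seeds $(1,4,5),(1,2,3),(1,1,2),(1,1,1)$ already give coprime solutions of $\parens{x+y+z}^2 = b\,xyz$; for $b \in \{1,2,3,4\}$ scale a coprime solution of $b' = \lambda b \in \{5,\dots,9\}$ by $\lambda$, e.g.\ $(9,9,9)$ for $b=1$, $(4,4,8)$ for $b=2$, $(3,3,3)$ for $b=3$, $(2,2,4)$ for $b=4$, so that $g$ stays constant along the family. From any solution $(x,y,z)$, fixing two of the coordinates makes $\parens{x+y+z}^2 = b\,xyz$ a monic quadratic in the third whose two roots have product the square of the sum of the fixed pair; replacing the third coordinate by the other root and iterating along a non-degenerate branch produces an infinite sequence of solutions with unbounded primitive part $(x/g,y/g,z/g)$. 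With the previous paragraph, this yields, for every admissible $b$, infinitely many triangles $P$ with $2\area{P}=b$ and denominator exceeding $k$; that no other value of $b$ occurs is \cref{lem: number-theory lemma for triangular PRPs}.

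The remaining and only substantial point is that each such $P = P(v_1,v_2,v_3)$ is pseudointegral --- equivalently, $\card{tP \cap \Z^2} = \tfrac b2\,t(t+1)+1$ for all $t \in \Zp$ --- which is the converse of the reduction performed in \cref{sec: reduction to number-theoretic lemma} (granting it, $L_P(-1) = 1$ gives one interior lattice point, and the first paragraph then gives $b$ boundary lattice points). Conceptually, $P$ is the anticanonical polytope of a $\Q$-Fano toric surface $X$, so $\card{tP \cap \Z^2}$ equals $\chi\bigl(X,\mathcal{O}_X(-tK_X)\bigr)$ (higher cohomology vanishing since $-tK_X$ is nef), which orbifold Riemann--Roch writes as $1 + \tfrac12 K_X^2(t^2+t)$ plus a sum of Dedekind-sum-type correction terms, one per singular point, each periodic in $t$; the hypothesis $K_X^2 = \parens{x+y+z}^2/(xyz) \in \Z$ is exactly what makes those corrections cancel. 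In the elementary spirit of this paper, the same cancellation can be checked by triangulating $P$ from the origin into three rational triangles, computing each one's lattice-point quasi-polynomial by a half-open (Fourier--Dedekind) decomposition, and verifying that the non-constant periodic parts of the three constituents cancel precisely when $\parens{x+y+z}^2/(xyz)$ is an integer. I expect this period-collapse verification to be the crux; the rest is bookkeeping with Pick's theorem and $2\times 2$ determinants.
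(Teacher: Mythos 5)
Your overall architecture (Vieta jumping to produce unboundedly many solutions of $(x+y+z)^{2} = bxyz$, a dual-type triangle attached to each solution, and a growth argument for the denominator) parallels the paper's, but there is a genuine gap at the central step, and you acknowledge it yourself: the pseudointegrality of the triangle $P(v_{1},v_{2},v_{3})$ is never established. You assert that the periodic (Dedekind-sum) correction terms in orbifold Riemann--Roch ``cancel precisely when $(x+y+z)^{2}/(xyz)$ is an integer,'' but that is exactly the statement requiring proof, and it is doubtful as stated. Two warning signs come from the paper itself. First, the $8$-gon exhibited in \cref{sec: pseudoreflexive polygons} has integral dual, a unique interior lattice point, and all edges at lattice distance $1$, yet is not a PIP; so integrality of the relevant numerical data does not by itself force period collapse, and something beyond $b \in \Z$ must be invoked. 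Second, the paper explicitly remarks that its construction cannot be applied to the solution $(4,5,81) \in S_{5}$, because that solution fails the divisibility conditions $x \divides y$ and $x \divides z$. The paper's proof of pseudointegrality is therefore not a formal consequence of $b \in \Z$: it selects a special one-parameter family of solutions (generated by always jumping the second-largest coordinate from a Vieta-reduced seed) precisely so that these divisibility conditions hold, and then exhibits explicit integral shearing matrices carrying the three half-open triangles of the star decomposition of $T_{xyz}$ onto a disjoint decomposition of a fixed half-open \emph{integral} triangle $B$ with $\ehr{B}(t) = \tfrac{b}{2}t^{2} + \tfrac{b}{2}t + 1$. Without an analogue of that unimodular-rearrangement argument, or an actual proof of your cancellation claim for the specific fans you choose, your construction does not yield PIPs.

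A secondary issue is that your construction is underdetermined: for fixed weights $(x,y,z)$ there may be several lattice-inequivalent choices of $(v_{1},v_{2},v_{3})$ (several fake weighted projective planes with the same weights), with different local correction terms, so ``one checks such $v_{i}$ exist'' does not suffice --- the fan must be specified and the pseudointegrality verified for it. The parts of your argument that do work (the reduction of the theorem to producing such triangles, the Vieta-jumping production of infinitely many solutions with controlled gcd, and the observation that the denominators are then unbounded, which the paper instead obtains from the Lagarias--Ziegler finiteness theorem together with the invariant $V$) are sound, but they are, as you say, the bookkeeping; the crux is missing.
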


We construct these PIPs in \cref{sec: Infinite families of
pseudo-reflexive triangles} using suitable solutions to the equation $b
= (x+y+z)^{2}/(xyz)$ in \cref{lem: number-theory lemma for triangular
PRPs}.  For example, we find the following infinite family of PIPs,
each with one interior lattice point and $9$ boundary lattice points.

\begin{example}\label{exm: Fibonacci PRPs}
    Let $F_{1}, F_{2}, \dotsc$ be the Fibonacci sequence, with $F_{1}
    = F_{2} = 1$. Then
    \begin{equation*}
        T_{j}
            \deftobe%
            \conv{%
                \setof{%
                    \begin{bmatrix*}[c]
                        -3 F_{2j-1}/F_{2j+1}     \\
                         3 F_{2j-1}/F_{2j+1} - 1
                    \end{bmatrix*},
                    \begin{bmatrix*}[r]
                         0 \\
                        -1
                    \end{bmatrix*},
                    \begin{bmatrix*}[c]
                         3 F_{2j+1}/F_{2j-1} \\
                        -1
                    \end{bmatrix*}
                }%
            }%
    \end{equation*}
    is a PIP with exactly one interior lattice point and $9$ boundary
    lattice points for all $j \in \Zp$.
\end{example}

The authors did not anticipate that the Fibonacci sequence would arise
in such infinite sequences of rational PIPs.  However, with the
benefit of hindsight, this fact seems to be in accordance with the
appearance of the golden ratio in the irrational PIPs constructed by
Cristofaro-Gardiner, Li, and Stanley in
\cite[Example~1.2]{CriLiSta2019}.

It follows from \cref{thm: first main result} that pseudointegral
triangles with one interior lattice point cannot have a greater number
of boundary lattice points than is attained by some integral polygon
with one interior lattice point.  In contrast, our third main result
(\cref{thm: third main result} below) is a construction of
pseudointegral polygons with $i \ge 2$ interior lattice points that,
asymptotically in~$i$, have $5/2$~times as many boundary lattice
points as is attained by any integral polygon with $i$ interior
lattice points.

In order to state our third main result concisely, we introduce some
notation.  A~\defing{polygonal} PIP is a PIP that is a polygon.  Given
a convex polygon $P \subset \R^{2}$, let ${\intp{P} \deftobe
\ncard{\intr{P} \cap \Z^{2}}}$ and ${\bndp{P} \deftobe \ncard{\bnd{P}
\cap \Z^{2}}}$, where~$\intr{P}$, respectively~$\bnd{P}$, denotes the
topological interior, respectively boundary, of~$P$.

When $P$ is a polygonal PIP, the values of $\intp{P}$ and $\bndp{P}$
determine the coefficients of the Ehrhart polynomial $\ehr{P}(t)
\deftobe \card{tP \cap \Z^{2}}$ of~$P$ according to Pick's formula:
$\ehr{P}(t) = (\intp{P} + \tfrac{1}{2} \bndp{P} - 1) t^{2} +
\tfrac{1}{2} \bndp{P} t + 1$ \cite[Theorem~3.1]{McAWoo2005}.  For
integral polygons~$P$, the fact that $\bndp{P} \le 9$ when $\intp{P} =
1$ is a special case of a more general bound due to Paul~Scott, which
yields a characterization of all Ehrhart polynomials of integral
polygons.

\begin{theorem}[\cite{Sco1976}]\label{thm: Scott's inequality}
    For all $i, b \in \Zp$, %
    there exists an integral polygon~$P$ such that $\intp{P} = i$ and
    $\bndp{P} = b$ if and only if either $i = 1$ and $3 \le b \le 9$,
    or $i \ge 2$ and $3 \le b \le 2i + 6$.
\end{theorem}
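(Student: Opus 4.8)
The plan is to prove the two directions separately; the backward (realizability) direction is elementary, while the forward direction is Scott's inequality. For the forward direction, let $P \subset \R^2$ be an integral polygon with $\intp{P} = i \ge 1$ and $\bndp{P} = b$. A two-dimensional convex polygon has at least three vertices, so $b \ge 3$ is automatic, and the content is the upper bound: $b \le 9$ if $i = 1$, and $b \le 2i + 6$ if $i \ge 2$. The case $i = 1$ follows from the classification recalled in the introduction: up to affine unimodular equivalence, the convex integral polygons with a single interior lattice point are the $16$ reflexive polygons of \cref{fig:reflexive polygons}, each of which has at most $9$ boundary lattice points.

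So assume $i \ge 2$. I would argue by cases on the lattice width $w$ of $P$, first applying an affine unimodular transformation so that $P \subset \setof{(x,y) \st 0 \le y \le w}$ with the functional $(x,y)\mapsto y$ realizing the width. If $w = 1$ then $P$ lies between two consecutive lattice lines, so $\intp{P} = 0$, contradicting $i \ge 2$. If $w = 2$, every interior lattice point of $P$ lies on the line $y = 1$; writing $\ell_j$ for the length of the horizontal slice $P \cap \setof{y = j}$, convexity gives $\tfrac12(\ell_0 + \ell_2) \le \ell_1$ (the midpoint of a point of the bottom slice and a point of the top slice lies in the middle slice), every lattice point on $y = 0$ or on $y = 2$ lies on $\bnd{P}$, at most two lattice points on $y = 1$ lie on $\bnd{P}$, and one checks that $\ell_1 \le i + 1$; hence $b \le (\ell_0 + 1) + (\ell_2 + 1) + 2 \le 2\ell_1 + 4 \le 2i + 6$. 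The case $w \ge 3$ is the crux of the theorem, since this slice count is then too lossy. Here I would pass to the \emph{interior polygon} $P'$, obtained by shifting each facet inequality of $P$ inward by $1$; it satisfies $P' \cap \Z^2 = \intr{P} \cap \Z^2$, so $i = \card{P' \cap \Z^2}$, and when $P'$ is two-dimensional this equals $\intp{P'} + \bndp{P'}$. The task is then to bound $\bndp{P}$ in terms of the combinatorics of $P'$ — essentially, to control how fast the number of boundary lattice points can grow as $P'$ is dilated back out to $P$ — and to run an induction on $i$; the degenerate possibilities for $P'$ (empty set, point, segment) pin $i$ onto a short list and are handled directly. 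This analysis further shows that $b = 2i + 7$ is attained only by the triangle $\conv\setof{(0,0),(3,0),(0,3)}$, for which $i = 1$ and $b = 9$, whence $b \le 2i + 6$ as soon as $i \ge 2$.

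For the backward direction I would exhibit explicit integral polygons realizing each admissible pair $(i, b)$, verifying the invariants by Pick's formula. For every $i \ge 1$, the triangle $\conv\setof{(0,0),(1,0),(2i, 2i+1)}$, the parallelogram $\conv\setof{(0,0),(1,0),(2, i+1),(1, i+1)}$, and the pentagon $\conv\setof{(0,0),(1,0),(2,1),(1, i+1),(0,1)}$ each have primitive edges and exactly $i$ interior lattice points, so they realize $(i, 3)$, $(i, 4)$, and $(i, 5)$ respectively. For larger $b$, start from the rectangle $[0, i+1] \times [0, 2]$, which realizes $(i, 2i + 6)$, and repeatedly excise an ``empty'' corner triangle, i.e.\ a lattice triangle of area $\tfrac12$: such an excision leaves the number of interior lattice points unchanged and decreases the number of boundary lattice points by exactly $1$, and cutting the four corners and then trimming inward along the two long horizontal edges realizes every admissible pair $(i, b)$ with $6 \le b \le 2i + 6$. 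Finally $(1, 9)$ is realized by $\conv\setof{(0,0),(3,0),(0,3)}$. The main obstacle throughout is the $w \ge 3$ case of the forward direction: one genuinely needs the interior-polygon technique (or an equivalent hands-on argument, as in Scott's original paper) both to control the growth of the boundary and to single out $\conv\setof{(0,0),(3,0),(0,3)}$ as the unique extremal polygon.
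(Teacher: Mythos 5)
The paper does not prove this statement at all: it is quoted with attribution to Scott's 1976 paper \cite{Sco1976} (with \cite{HaaSch2009} as a modern reference for the same circle of ideas), so there is no internal proof to compare against, and you are in effect attempting to reprove a cited classical theorem. Judged on its own terms, your write-up is a plan rather than a proof, and the unexecuted part is exactly the hard part. The cases you do carry out are essentially fine: $b \ge 3$ is immediate; the $i = 1$ bound follows from the $16$-polygon classification (which the paper itself only asserts); the lattice-width-$1$ and width-$2$ cases are correct (your inequalities $\tfrac12(\ell_0+\ell_2)\le\ell_1$ and $\ell_1 \le i+1$ both check out, the latter because the relative interior of the middle chord lies in $\intr{P}$ and contains at least $\lceil \ell_1\rceil - 1$ lattice points). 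But for lattice width $\ge 3$ you only announce the strategy --- pass to the interior polygon $P'$, ``bound $\bndp{P}$ in terms of the combinatorics of $P'$,'' and ``run an induction on $i$'' --- without stating or proving the key quantitative lemma (in the onion-skin approach of \cite{HaaSch2009}, the inequality $\bndp{P} \le \bndp{P'} + 12$ together with the separate treatment of the degenerate $P'$; in Scott's original approach, a direct case analysis on the widths in two directions). Since every step you have actually verified avoids this case, the inequality $b \le 2i+6$ for $i \ge 2$ is not established, nor is the claimed uniqueness of $\conv\setof{(0,0),(3,0),(0,3)}$ as the $b = 2i+7$ extremal.

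The realizability direction is in better shape: your triangle, parallelogram, and pentagon for $b = 3,4,5$ are correct (primitive edges, and Pick's formula gives $\intp{P}=i$ in each case), and the rectangle $[0,i+1]\times[0,2]$ realizes $(i, 2i+6)$. However, ``repeatedly excise an empty corner triangle \dots and then trim inward along the two long horizontal edges'' is not yet an argument: cutting a unimodular corner triangle does drop $\bndp{P}$ by exactly $1$ while fixing $\intp{P}$, but after the four corners are gone you have only reached $b = 2i+2$, and the subsequent ``trimming'' steps along the horizontal edges need to be specified and checked for convexity and for preservation of the $i$ interior points all the way down to $b = 6$. This is routine but currently missing; a cleaner route is to write down one explicit two-parameter family (e.g.\ suitable trapezoids between $y=0$ and $y=2$) covering all $6 \le b \le 2i+6$ and verify it once with Pick's formula.
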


Until recently, it was not known whether any polygonal PIP $P$ with
$\intp{P} \ge 2$ satisfied ${\bndp{P} > 2 \intp{P} + 6}$.  However,
exciting work by M.~Bohnert \cite{Boh2024preprint} has now ``broken
the Scott barrier'' by constructing half-integral polygonal PIPs $P$
with $\intp{P} \ge 2$ such that ${\bndp{P} = 2\intp{P} + 7}$.
(A~polytope $P \subset \R^{d}$ is \defing{half integral} if $2P$ is
integral.)  Moreover, Bohnert proved that his constructions attain the
maximum possible number of boundary points for half-integral polygonal
PIPs, in the sense that $\bndp{P} \le 2\intp{P} + 7$ for all
half-integral polygonal PIPs $P$.  Thus, Bohnert characterized
precisely the Ehrhart polynomials of half-integral polygonal PIPs,
just as Scott had done for integral polygons.

\begin{theorem}[{\cite[Theorem 6.4]{Boh2024preprint}}]
    For all $i, b \in \Zp$, there exists a half-integral polygonal PIP
    $P$ such that $\intp{P} = i$ and $\bndp{P} = b$ if and only if ${2
    \le b \le 2i + 7}$.
\end{theorem}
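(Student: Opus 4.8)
The plan is to first translate pseudointegrality into two concrete numerical conditions on the polygon, then extract the inequality $2 \le b \le 2i + 7$ from those conditions, and finally exhibit matching constructions. Throughout let $P$ be a half-integral polygon, put $i \deftobe \intp{P}$ and $b \deftobe \bndp{P}$, and let $Q \deftobe 2P$, which is an integral polygon.

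\textbf{Step 1 (reduction).} Since $P$ is half-integral, $\ehr{P}(t) = \card{tP \cap \Z^{2}}$ agrees with a quasipolynomial of period dividing $2$; write $f_{0}$ for the constituent on even arguments and $f_{1}$ for the constituent on odd arguments, each a quadratic with leading coefficient $\area{P}$. For $t = 2s$ we have $f_{0}(2s) = \card{sQ \cap \Z^{2}} = \ehr{Q}(s)$, so Pick's formula for the integral polygon $Q$, together with $\area{Q} = 4\area{P}$, gives $f_{0}(t) = \area{P}\,t^{2} + \tfrac{1}{4}\bndp{Q}\,t + 1$. On the other hand $f_{1}(1) = \card{P \cap \Z^{2}} = i + b$, while Ehrhart--Macdonald reciprocity, restricted to the odd residue class and evaluated at $-1$, gives $f_{1}(-1) = \card{\intr{P} \cap \Z^{2}} = i$; since $f_{1}$ is a quadratic with leading coefficient $\area{P}$, these two values force $f_{1}(t) = \area{P}\,t^{2} + \tfrac{b}{2}\,t + \bigparens{i + \tfrac{b}{2} - \area{P}}$. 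Therefore $P$ is a PIP, i.e.\ $f_{0} = f_{1}$, if and only if $\bndp{Q} = 2\bndp{P}$ and the identity $\area{P} = i + \tfrac{b}{2} - 1$ (Pick's formula) holds for $P$; and in that case $\intp{Q} = 4i + b - 3$.

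\textbf{Step 2 (lower bound).} If $P$ is a half-integral PIP then by Step 1 the integral polygon $Q$ satisfies $\bndp{Q} = 2b$. Every polygon has at least three vertices, and the vertices of the integral polygon $Q$ are lattice points lying on $\bnd{Q}$, so $2b = \bndp{Q} \ge 3$ and hence $b \ge 2$.

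\textbf{Step 3 (upper bound --- the main obstacle).} I expect this to be the bulk of the work. Applying Scott's inequality (\cref{thm: Scott's inequality}) directly to $Q$ is useless: with $\intp{Q} = 4i+b-3$ and $\bndp{Q} = 2b$ it only yields $2b \le 2(4i+b-3)+6$, which is vacuous. The extra structure to exploit is that exactly half of the boundary lattice points of $Q$ --- namely the $b$ points $2v$ with $v \in \bnd{P} \cap \Z^{2}$ --- lie in the sublattice $\Lambda \deftobe 2\Z^{2}$, while exactly $i$ of the interior lattice points of $Q$ lie in $\Lambda$; moreover, along any edge of $Q$ consecutive lattice points differ by a primitive vector, hence lie in alternating cosets of $\Lambda$ in $\Z^{2}$, so at most $\ceil{k/2}$ of the $k$ lattice points on an edge are $\Lambda$-points. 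The strategy is to re-run a proof of Scott's inequality while bookkeeping $\Lambda$-points --- for instance via the lattice width of $P$ relative to its interior hull $\conv(\intr{P} \cap \Z^{2})$, or by successively chopping corners off $Q$ and tracking how $\bndp{P}$ and $\intp{P}$ change --- so that the resulting estimate is sensitive to $\intp{P} = i$ rather than to $\intp{Q}$. The single extra unit over Scott's bound $2i+6$ should be attributable to one exceptional shape, a half-integral analogue of the large triangle realizing $(i,b) = (1,9)$; isolating and disposing of that case is where I anticipate the real difficulty.

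\textbf{Step 4 (constructions).} When $i = 1$ and $3 \le b \le 9$, or $i \ge 2$ and $3 \le b \le 2i+6$, an integral polygon with the prescribed pair $(\intp{},\bndp{})$ exists by Scott's theorem (\cref{thm: Scott's inequality}) and is automatically a PIP. It remains to construct half-integral PIPs with $b = 2$ for every $i \ge 1$ and with $b = 2i+7$ for every $i \ge 2$; for these I would use the criterion of Step 1 as a checklist, namely to exhibit a half-integral polygon whose area obeys Pick's formula and whose boundary meets $\Z^{2}$ in exactly $b$ points (equivalently, whose $\tfrac{1}{2}\Z^{2}$-boundary points split evenly between $\Z^{2}$ and its complement). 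For $b = 2$, a suitably proportioned half-integral triangle or quadrilateral carrying two collinear lattice points on a single edge and no other boundary lattice point works, with $i$ tuned by scaling the area. For $b = 2i+7$, I would start from a Scott-extremal integral polygon --- or from a triangle arising from a solution of $b = (x+y+z)^{2}/(xyz)$ as in \cref{lem: number-theory lemma for triangular PRPs} --- and shave a thin half-integral corner so as to add exactly one boundary lattice point while preserving Pick's formula, verifying $\bndp{2P} = 2\bndp{P}$ by inspection. Assembling Steps 2--4 then yields the theorem.
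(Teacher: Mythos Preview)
The paper does not prove this theorem; it is quoted verbatim from Bohnert's preprint \cite[Theorem~6.4]{Boh2024preprint} and used only as context for the paper's own results. There is therefore no proof in the paper to compare your proposal against.

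On the merits of your proposal itself: Steps~1 and~2 are correct and complete, and your reformulation of half-integral pseudointegrality via the conditions $\bndp{2P} = 2\bndp{P}$ and Pick's formula is exactly right (this is essentially the $d=2$ specialization of \cref{thm: PIP criteria}). However, Steps~3 and~4 are not proofs but programs, and you say so yourself (``I expect this to be the bulk of the work'', ``where I anticipate the real difficulty'', ``I would use\dots''). In particular, the observation that lattice points on an edge of $Q$ alternate between $2\Z^{2}$ and its complement is correct and relevant, but you have not shown how to turn a sublattice-aware version of Scott's argument into the bound $b \le 2i+7$; that step is where the actual content of Bohnert's theorem lies, and nothing here supplies it. Likewise, for Step~4 you have not exhibited a single explicit half-integral PIP with $b = 2i+7$, only described properties such a polygon would need to have. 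So while your framework is sound, the proposal as written has genuine gaps at the two substantive points and would need Bohnert's arguments (or equivalents) to close them.
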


Our constructions below substantially extend the range of values of
$(\intp{P},\bndp{P})$ that are known to be realized by polygonal PIPs.
Thus we correspondingly extend the set of polynomials that are known
to be the Ehrhart polynomials of rational polygons.

\begin{theorem}\label{thm: third main result}
    For all $i, b \in \Zp$ such that $b \le 5i + 4$, there exists a
    rational polygonal PIP~$P$ such that $\intp{P} = i$ and $\bndp{P}
    = b$.
\end{theorem}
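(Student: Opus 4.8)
The plan is to realize every admissible pair $(i,b)$ by an explicit rational polygonal PIP, building the whole region $\{(i,b) : i \ge 1,\ b \le 5i+4\}$ out of a few ingredients.

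The principal ingredient is a family of triangles generalizing \cref{exm: Fibonacci PRPs}. I would take the base edge to be the horizontal lattice segment from $(0,-1)$ to $(n,-1)$ and place the third vertex at a rational point $v=(\xi,\eta)$ with $\eta>0$, so that $T=\conv\{(0,-1),(n,-1),v\}$ has its ``interior rows'' at $y=0,1,\dots,\lceil\eta\rceil-1$. Two requirements must be met simultaneously. First, an arithmetic requirement: the relevant invariants of $T$ (in the paper's setup, built from the denominators $\den(\cdot)$ of its vertices) must satisfy the Diophantine identity proved in the appendix as a generalization of \cref{lem: number-theory lemma for triangular PRPs}; this is exactly the period-collapse condition forcing $\ehr{T}$ to be a polynomial, and I would quote it directly. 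Second, a combinatorial requirement: computing $\intp{T}$ and $\bndp{T}$ row by row — the base contributes $n+1$ boundary points; each interior row $y=k$ contributes the lattice points of its cross-section, with the two endpoints (lying on the slanted edges) boundary and the rest interior; and the top row contributes nothing since $\eta\notin\Z$ — must yield the target pair. Matching the extremal configuration against \cref{exm: Fibonacci PRPs} in the case $i=1$ pins down the identity $\bndp{T}=5\,\intp{T}+4$ along the top of the range, and sliding $v$ toward the centre of the base (so the cross-sections widen and absorb base length into interior points) and then shrinking $n$ walks $\bndp{T}$ downward, at fixed $\intp{T}$, through a window of values.

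To cover pairs not reached by this family I would use three operations that visibly preserve pseudointegrality. Dilation $P\mapsto tP$ sends $(i,b)$ to $\bigl(t^2(i+\tfrac b2-1)-t\tfrac b2+1,\ tb\bigr)$, since $\ehr{tP}(s)=\ehr{P}(ts)$ remains a polynomial; this reaches pairs with small ratio $b/i$. Gluing two PIPs along a common edge that is a lattice segment, with convex union, again gives a PIP, because $\ehr{P_1\cup P_2}=\ehr{P_1}+\ehr{P_2}-\ehr{E}$ and a lattice segment has linear Ehrhart function; this adds boundary points in controlled increments. Lastly, the integral polygons of \cref{thm: Scott's inequality} are PIPs and already realize every pair with $3\le b\le 2i+6$, in particular all of the line $i=1$, $3\le b\le 9$; the remaining cases $b\in\{1,2\}$ I would handle by ad hoc rational PIPs (for instance a triangle with a single lattice vertex whose incident edges have denominators chosen to avoid every other lattice point while trapping exactly $i$ interior lattice points, checking polynomiality of its Ehrhart function directly and using dilation to adjust $i$). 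It then remains to verify that the regions realized by the triangle family, its dilates and glued unions, the ad hoc PIPs, and Scott's polygons together exhaust $\{(i,b):i\ge1,\ b\le 5i+4\}$; the delicate bookkeeping point is ensuring the lower edge of the triangle family's window overlaps Scott's bound $b=2i+6$, so that no gap survives between them.

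The step I expect to be the main obstacle is the first one: for every $i\ge 2$, exhibiting an actual triangle (or, if triangles do not suffice, a slightly more complex polygon) whose arithmetic invariants satisfy the appendix's Diophantine relation \emph{and} whose row count gives precisely the prescribed pair $(\intp{T},\bndp{T})$. Controlling the arithmetic that produces period collapse and the geometry that fixes $i$ and $b$ at the same time is the crux, and the linchpin is making the extremal identity $\bndp{T}=5\,\intp{T}+4$ persist for all $i$ rather than only $i=1$ — presumably via a family indexed by a Fibonacci- or Markov-type tower of solutions to the Diophantine equation, in the spirit of \cref{exm: Fibonacci PRPs}.
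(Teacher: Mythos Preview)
Your proposal has a genuine gap: the ``arithmetic requirement'' you intend to quote does not exist in the form you need. The formula $\bndp{T}=(x+y+z)^{2}/(xyz)$ in \cref{prop: b for triangular PRPs} is derived only for triangular PIPs with $\intp{T}=1$; there is no analogue for $\intp{T}\ge 2$, and the paper says explicitly that the appendix generalization to $n\ge 4$ variables ``does not have the direct relevance to $n$-gonal PRPs that it had in the $n=3$ case.'' So neither the triangle Diophantine identity nor the appendix supplies a period-collapse criterion for polygons with $i\ge 2$. Relatedly, the Fibonacci triangles of \cref{exm: Fibonacci PRPs} all have $\intp{T}=1$ and $\bndp{T}=9$; they do not give a tower with growing $i$, and ``sliding $v$ toward the centre of the base'' or ``shrinking $n$'' are not operations that preserve pseudointegrality --- generic rational perturbations of a PIP destroy the polynomiality of the Ehrhart function. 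As written, the plan therefore has no mechanism for producing a single PIP with $i\ge 2$ and $b>2i+7$, which is precisely the new content of the theorem.

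The paper's route is quite different and much more concrete. For each $d\in\{3,4,10\}$ it writes down an explicit polygon $\PIP{d}{i}{b}$ with five or six vertices (coordinates given in closed form in $i$ and $b$), the $d=10$ family covering $2\le b\le 5i+4$. Pseudointegrality is proved not by a Diophantine criterion but by a cut-and-rearrange argument: the polygon is dissected into a few partially-open triangles, each is moved by a lattice shear or translation, and the images reassemble into a union of \emph{lattice} polygons, whose Ehrhart function is manifestly a polynomial. The $b=1$ case is handled separately by the triangles of \cref{exm: PIPs with 1 or 2 boundary points}. If you want to salvage your outline, the missing idea is exactly this: abandon the search for an arithmetic period-collapse condition and instead exhibit, for the extremal line $b=5i+4$, a polygon that visibly rearranges into lattice pieces under $\GL_{2}(\Z)$; then deform it within the same framework to reach smaller $b$.
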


This theorem is proved in \cref{sec: New Ehrhart polynomials}.  The
vertices of the polygons that we require to prove \cref{thm: third
main result} have all of their vertices in $(\frac{1}{2}\Z^{2}) \cup
(\frac{1}{5}\Z^{2})$ and hence have denominator at most $10$.  These
results suggest the possibility that every \mbox{denominator-$d$}
polygonal PIP $P$ with $\intp{P} \ge 1$ must satisfy $\bndp{P} \le
\ell_{d}(\intp{P})$ for some linear polynomial $\ell_{d}(t)$ depending
only on $d$.  By the work of Scott and Bohnert, this claim is now
known to be true in the $d = 1, 2$ cases with $\ell_{1}(t) = 2t + 6$
and $\ell_{2}(t) = 2t + 7$.  The constructions in \cref{sec: New
Ehrhart polynomials} show that, if such polynomials $\ell_{d}(t)$
exist in the cases $d = 3, 4, 10$, then the leading coefficient of
$\ell_{3}(t)$ is $\ge 3$, of $\ell_{4}(t)$ is $\ge 4$, and of
$\ell_{10}(t)$ is $\ge 5$.

The remaining sections are organized as follows.  In \cref{sec:
preliminaries}, we review standard notation and results from Ehrhart
theory.  In \cref{sec: pseudoreflexive polygons}, we prove that all
polygonal PIPs with exactly one interior lattice point satisfy certain
standard properties of reflexive polygons.  In \cref{sec: reduction to
number-theoretic lemma}, we show that \cref{thm: first main result}
reduces to \cref{lem: number-theory lemma for triangular PRPs}, which
in turn we prove in \cref{sec: Number-theoretic lemmas for PRPs}.  In
\cref{sec: Infinite families of pseudo-reflexive triangles}, we
construct infinitely many pseudointegral PIPs with every attainable
number of boundary lattice points.  In \cref{sec: New Ehrhart
polynomials}, we give the constructions that prove \cref{thm: third
main result}.  Finally, in the appendix, we prove a generalization of
\cref{lem: number-theory lemma for triangular PRPs}.

\section{Preliminaries}
\label{sec: preliminaries}

In this section, we review standard terminology and results that we
will use in the following sections.  See especially \cite{BecRob2015}
for an excellent introduction to Ehrhart theory

Fix $d \in \Zp$ and let $\inner{\cdot, \cdot}$ be the standard inner
product on $\R^{d}$.  Given a subset $X \subset \R^{d}$, the
\defing{affine span} $\aff(X)$ of $X$ is the minimum affine subspace
of~$\R^{d}$ containing~$X$.  The set $X$ is \defing{reticular} if
${\aff(X) \cap \Z^{d} \ne \emptyset}$.  We write $\intr{X}$ and
$\bnd{X}$ for the \defing{relative interior} and \defing{relative
boundary}, respectively, of $X$, \emph{i.e.}, the interior and
boundary, respectively, of $X$ relative to $\aff(X)$.  Let $\intp{X}
\deftobe \card{\smash{\intr{X} \cap \Z^{d}}}$ and $\bndp{X} \deftobe
\card{\smash{\bnd{X} \cap \Z^{d}}}$.  If $X$ is measurable and
$\aff(X)$ is a translation of a rational linear subspace of $\R^{d}$,
then the \defing{relative volume} $\relvol(X)$ is the volume of $X -
a$ relative to the lattice $(\aff(X) - a) \cap \Z^{d}$, where $a$ is
any point in $\aff(X)$.  In particular, if $E \subset \R^{d}$ is a
line segment (open, half-open, or closed) that is parallel to a
one-dimensional rational linear subspace of $\R^{d}$, then we write
$\llength{E} \deftobe \relvol(E)$ and call $\llength{E}$ the
\defing{lattice length} of $E$.

A \defing{polytope} is the convex hull of a finite subset of $\R^{d}$.
Let $P \subset \R^{d}$ be a polytope.  The \defing{dimension} of $P$
is $\dim(P) \deftobe \dim(\aff(P))$.  A \defing{face} $F$ of $P$ is a
subset of the form $P \cap H$, where $H = \setof{a \in \R^{d} \st
\inner{u, a} = \beta}$ for some $u \in \R^{d}$ and $\beta \in \R$ such
that $\inner{u, a} \le \beta$ for all $a \in P$.  The \defing{facets}
of $P$ are the $(\dim(P) - 1)$-dimensional faces, and the
\defing{vertices} of $P$ are the $0$-dimensional faces.  Write
$\vertx(P)$ for the set of vertices of $P$.  The polytope $P$ is
\defing{rational}, respectively \defing{integral}, if $\vertx(P)
\subset \Q^{d}$, respectively $\vertx(P) \subset \Z^{d}$.  If $P$ is
rational, the \defing{denominator} of $P$ is the minimum positive
integer $k$ such that $kP$ is integral.  Write $\den(P)$ for the
denominator of $P$.

For every bounded subset $X$ of $\R^{d}$, the \defing{Ehrhart
function} of $X$ is the function $\ehr{X} \maps \Zp \to \Znn$ defined
by $\ehr{X}(t) \deftobe \ncard{tX \cap \Z^{d}}$, where $tX \deftobe
\setof{ta \st a \in X}$ for $t \in \Zp$.

\begin{theorem}[Ehrhart \cite{Ehr1962a}]\label{thm: Ehrhart's theorem}
    Let $P \subset \R^{d}$ be a nonempty rational polytope.  Then
    $\ehr{P}$ is a quasipolynomial of degree $\dim(P)$.  In
    particular, there exist unique periodic functions $c_{0}, c_{1},
    \dotsc, c_{\dim(P)} \maps \Z \to \Q$ such that
    \begin{equation}\label{eq: quasipolynomial of a polytope}
        \ehr{P}(t) 
            =%
            c_{0}(t) + c_{1}(t) t + \dotsb + c_{\dim(P)}(t)
            t^{\dim(P)}
    \end{equation}
    with $c_{\dim(P)}(t) = \relvol(P)$ if $tP$ is reticular, and
    $c_{\dim(P)}(t) = 0$ otherwise, for all $t \in \Zp$.
\end{theorem}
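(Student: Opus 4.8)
The plan is to establish quasi-polynomiality by the generating-function method and then to pin down the degree and the leading coefficient separately; the first move is a reduction to the case of a simplex. I would triangulate $P$ using only its own vertices, decompose $P$ into the relative interiors of the cells of the triangulation, and expand each relative-open cell by inclusion--exclusion over its faces; this writes $\ehr{P}$ as a $\Z$-linear combination of the functions $\ehr{\Delta}$ for closed rational simplices $\Delta$ of dimension $\le \dim P$. Since quasipolynomials are closed under addition and subtraction, and since cells of dimension $< \dim P$ cannot affect the degree-$(\dim P)$ coefficient, it then suffices to prove the theorem---with the precise claims about degree and leading coefficient---for a rational simplex.

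So let $\Delta = \conv\{v_{0}, \dotsc, v_{m}\} \subset \R^{d}$ be a rational $m$-simplex. I would pass to the cone over $\Delta$: choose $\delta_{i} \in \Zp$ with $\delta_{i} v_{i} \in \Z^{d}$, set $\tilde w_{i} \deftobe (\delta_{i} v_{i}, \delta_{i}) \in \Z^{d+1}$, and let $C$ be the (pointed, simplicial) cone they generate. Reading off the last (``height'') coordinate, the lattice points of $C$ at height $t$ are $(t\Delta \cap \Z^{d}) \times \{t\}$ for $t \ge 1$ and just the origin for $t = 0$, so $1 + \sum_{t \ge 1} \ehr{\Delta}(t)\, x^{t} = \sum_{p \in C \cap \Z^{d+1}} x^{p_{d+1}}$. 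Because the $\tilde w_{i}$ are lattice points, every lattice point of $C$ is uniquely $q + \sum_{i} n_{i} \tilde w_{i}$ with $n_{i} \in \Znn$ and $q$ in the half-open parallelepiped $\Pi \deftobe \{\sum_{i} \lambda_{i} \tilde w_{i} : 0 \le \lambda_{i} < 1\}$, which contains only finitely many lattice points; summing the resulting geometric series gives
\[
  1 + \sum_{t \ge 1} \ehr{\Delta}(t)\, x^{t}
    \;=\; \frac{\sum_{q \in \Pi \cap \Z^{d+1}} x^{q_{d+1}}}{\prod_{i=0}^{m} \bigl(1 - x^{\delta_{i}}\bigr)}.
\]
Since $0 \le q_{d+1} < \sum_{i} \delta_{i}$ for $q \in \Pi$, the numerator has degree strictly below that of the denominator, whose zeros are all roots of unity, with a zero of order exactly $m + 1$ at $x = 1$ (the numerator does not vanish there, as $0 \in \Pi$). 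A partial-fraction expansion then expresses the coefficient of $x^{t}$ as $\sum_{\zeta} \zeta^{-t} p_{\zeta}(t)$, summed over roots of unity $\zeta$, with each $p_{\zeta}$ a polynomial of degree $\le m$ and $\deg p_{1} = m$; hence $\ehr{\Delta}$ agrees for all $t \ge 1$ with a quasipolynomial of degree exactly $m$, and consequently $\ehr{P}$ agrees with a quasipolynomial of degree $\le \dim P$.

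It remains to identify $c_{\dim P}$, which I would do simplex by simplex. If $t\Delta$ is not reticular, then $t\Delta \cap \Z^{d} \subseteq \aff(t\Delta) \cap \Z^{d} = \emptyset$, so $\ehr{\Delta}(t) = 0$; as this holds on a union of residue classes, the associated quasipolynomial vanishes identically on each, hence so does its top coefficient. If $t\Delta$ is reticular, I would write $\aff(\Delta) = a + W$ with $a \in \Delta \cap \Q^{d}$ and $W$ the ($m$-dimensional, rational) direction space, put $\Lambda \deftobe W \cap \Z^{d}$, and note that $t\Delta \cap \Z^{d}$ is, inside the plane $\aff(t\Delta)$, the intersection of a coset of $\Lambda$ with a translate of the dilate $t(\Delta - a)$; the standard count of lattice points in a dilated Jordan-measurable region then gives $\ehr{\Delta}(t)/t^{m} \to \operatorname{vol}_{W}(\Delta - a)/\operatorname{covol}(\Lambda) = \relvol(\Delta)$ along each reticular residue class, so the top coefficient of $\ehr{\Delta}$ equals $\relvol(\Delta)$ there. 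Summing over the $m$-dimensional simplices of the triangulation---whose relative volumes add to $\relvol(P)$, and for which $t\Delta$ is reticular exactly when $tP$ is---yields the stated formula for $c_{\dim P}$, the lower-dimensional cells contributing nothing in top degree; since $\relvol(P) > 0$ and $tP$ is reticular at $t = \den(P)$, the degree of $\ehr{P}$ is in fact exactly $\dim P$. I expect this last step to be the main obstacle: as $t$ runs over a fixed residue class the affine lattice $\aff(t\Delta) \cap \Z^{d}$ shifts with $t$, and one must verify that neither this shift nor the translation of the region perturbs the leading term of the lattice-point count.
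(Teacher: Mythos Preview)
The paper does not prove this statement at all: Ehrhart's theorem is quoted in the preliminaries section as a classical result (with citation to \cite{Ehr1962a}, and with \cite{BecRob2015} given as a general reference for Ehrhart theory), and is used as a black box throughout. So there is no ``paper's own proof'' to compare against.

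That said, your outline is correct and is essentially the standard generating-function proof one finds in Beck--Robins: triangulate, reduce to simplices by inclusion--exclusion, cone over each simplex, tile the cone by translates of the half-open fundamental parallelepiped, read off the Ehrhart series as a rational function with denominator $\prod_i (1 - x^{\delta_i})$, and extract quasipolynomiality from the partial-fraction expansion. Your identification of the leading coefficient via the asymptotic $\ehr{\Delta}(t)/t^m \to \relvol(\Delta)$ on reticular residue classes, and the observation that $\ehr{\Delta}$ vanishes identically on non-reticular residue classes, is also the right idea; note that reticularity of $t\Delta$ indeed depends only on $t$ modulo a fixed period, since $\aff(\Delta) = a + W$ with $a$ rational and $W$ a rational subspace, so the condition $ta \in \Z^d + W$ is periodic in $t$.

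Your flagged concern at the end is genuine but easily handled. Fix a residue class on which $t\Delta$ is reticular, pick the smallest such $t_0$ and a lattice point $p_0 \in \aff(t_0\Delta) \cap \Z^d$. For $t$ in this class write $t = t_0 + sN$ (with $N$ the period); then $\aff(t\Delta) \cap \Z^d = p_0 + s\,N a' + \Lambda$ for some fixed $a' \in W$ (namely $a' = Na$ reduced into $W$ using an integer vector), so after the translation $x \mapsto x - p_0$ the problem becomes counting points of the fixed lattice $\Lambda$ in a translate of $t(\Delta - a)$. The translate shifts by a bounded vector as $t$ varies within the class (bounded because $sNa' - (t - t_0)a$ stays bounded modulo $\Lambda$), and a bounded translation does not affect the leading asymptotic of the lattice-point count in a dilated convex body. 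Alternatively, one can avoid asymptotics entirely by computing the residue of the Ehrhart series at $x = 1$ directly: the numerator at $x=1$ is $|\Pi \cap \Z^{d+1}| = \prod_i \delta_i \cdot \relvol(\Delta)$ (the index of the sublattice generated by the $\tilde w_i$), which immediately gives the leading term.
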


The right-hand side of \cref{eq: quasipolynomial of a polytope}
provides a canonical extension of the domain of $\ehr{P}$ to the
entirety of $\Z$.  Moreover the values of $\ehr{P}(t)$ for
negative~$t$ have a combinatorial interpretation due to
\emph{Ehrhart--Macdonald reciprocity}.

\begin{theorem}[Ehrhart--Macdonald reciprocity]
    Let $P \subset \R^{d}$ be a rational polytope.  Then the Ehrhart
    function of $\intr{P}$ is
    \begin{equation*}
        \ehr{\intr{P}}(t) 
            =
            (-1)^{\dim(P)}\ehr{P}(-t).
    \end{equation*}
\end{theorem}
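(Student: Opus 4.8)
The plan is to derive the reciprocity law from \emph{Stanley reciprocity} for the integer‑point transform of a pointed rational cone, after homogenizing $P$. First I would reduce to the case that $P$ is full‑dimensional: if $\aff(P)$ contains no lattice point then, under the extension convention of \cref{thm: Ehrhart's theorem}, both sides of the claimed identity are identically zero; otherwise, translating $P$ by a lattice vector we may assume $0 \in \aff(P)$, and we replace $\R^{d}$ by the rational linear subspace $\aff(P)$ with the lattice $\aff(P) \cap \Z^{d}$. So from now on assume $P \subset \R^{d}$ is $d$‑dimensional.

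Next I would homogenize: put $\operatorname{cone}(P) \deftobe \setof{(t x, t) \st x \in P,\ t \in \Rnn} \subset \R^{d+1}$, a $(d+1)$‑dimensional pointed rational cone, and write $\sigma_{\mathcal{C}}(z) \deftobe \sum_{m \in \mathcal{C} \cap \Z^{d+1}} z^{m}$ for the integer‑point transform of a cone $\mathcal{C}$, in multi‑index notation $z^{m} = z_{1}^{m_{1}} \dotsm z_{d+1}^{m_{d+1}}$. Slicing $\operatorname{cone}(P)$ at integer height $t$ recovers $tP$, and slicing its interior at height $t$ recovers $t \intr{P}$, so that as rational functions
\[
    \sigma_{\operatorname{cone}(P)}(1, \dotsc, 1, z) = \sum_{t \ge 0} \ehr{P}(t)\, z^{t}
    \qquad\text{and}\qquad
    \sigma_{\intr{\operatorname{cone}(P)}}(1, \dotsc, 1, z) = \sum_{t \ge 1} \ehr{\intr{P}}(t)\, z^{t}.
\]
Stanley's reciprocity theorem asserts that $\sigma_{\mathcal{C}}(1/z_{1}, \dotsc, 1/z_{n}) = (-1)^{\dim \mathcal{C}}\, \sigma_{\intr{\mathcal{C}}}(z_{1}, \dotsc, z_{n})$ as rational functions, for every pointed rational cone $\mathcal{C} \subset \R^{n}$. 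Specializing this to $\mathcal{C} = \operatorname{cone}(P)$ with $z_{1} = \dotsb = z_{d} = 1$ and $z_{d+1} = z$, and combining it with the elementary fact that the rational generating function $F(z) = \sum_{t \ge 0} g(t)\, z^{t}$ of any quasipolynomial $g$ satisfies $F(1/z) = -\sum_{t \ge 1} g(-t)\, z^{t}$ (which applies to $g = \ehr{P}$ by \cref{thm: Ehrhart's theorem}), I would get $\ehr{\intr{P}}(t) = (-1)^{d}\, \ehr{P}(-t)$ upon equating coefficients of $z^{t}$ for $t \ge 1$; since both sides are quasipolynomials, the identity extends to all $t \in \Z$.

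The hard part is Stanley reciprocity itself. I would prove it by fixing a triangulation of $\mathcal{C}$ into simplicial subcones and promoting it to a \emph{half‑open} triangulation — assigning every interior wall to exactly one of the two simplicial cones adjacent to it — so that $\mathcal{C}$ and $\intr{\mathcal{C}}$ each become a \emph{disjoint} union of half‑open simplicial cones and the integer‑point transforms add with no inclusion--exclusion. For a half‑open simplicial cone with primitive ray generators $v_{1}, \dotsc, v_{k}$ one has $\sigma(z) = \sigma_{\Pi}(z) \big/ \prod_{i=1}^{k}\bigl(1 - z^{v_{i}}\bigr)$, where $\Pi$ is the associated half‑open fundamental parallelepiped, and the reciprocity then reduces to the bijection $m \mapsto v_{1} + \dotsb + v_{k} - m$ between the lattice points of the closed parallelepiped and those of its relative interior, which is immediate. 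The genuinely delicate step is making the wall assignments mutually consistent across the whole triangulation; a complete account is in \cite[Chapter~4]{BecRob2015}.
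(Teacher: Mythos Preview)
The paper does not prove Ehrhart--Macdonald reciprocity; it is stated in \cref{sec: preliminaries} as a standard background result, with \cite{BecRob2015} cited as a general reference for Ehrhart theory. So there is no in-paper proof to compare against. Your outline via homogenization and Stanley reciprocity is essentially the argument in \cite[Chapter~4]{BecRob2015}, so you are reproducing precisely the proof the paper is implicitly pointing to.

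That said, your reduction to the full-dimensional case has a gap. You assert that if $\aff(P)$ contains no lattice point then both sides of the identity vanish identically. This is false: take $P = \{\tfrac{1}{2}\} \subset \R$, so $\dim(P) = 0$ and $\aff(P) = \{\tfrac{1}{2}\}$ contains no lattice point, yet $\ehr{P}(2) = 1$. The condition that $\aff(P)$ misses $\Z^{d}$ only says that $P$ itself is not reticular; it does not prevent $tP$ from being reticular for other $t$. The easiest fix is simply to drop the reduction: $\operatorname{cone}(P) \subset \R^{d+1}$ is a pointed rational cone of dimension $\dim(P)+1$ whether or not $P$ is full-dimensional in $\R^{d}$, the slice of its \emph{relative} interior at height $t$ is $t\,\intr{P}$, and Stanley reciprocity (stated for relative interiors) already produces the correct sign $(-1)^{\dim(P)+1}$. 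The remainder of your argument then goes through unchanged.
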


Ehrhart proved that the Ehrhart functions of integral polytopes have a
particularly nice form:

\begin{theorem}\label{thm: Ehrhart's theorem for integral polytopes}
    If $P \subset \R^{d}$ is an integral polytope, then $\ehr{P}$ is a
    polynomial.
\end{theorem}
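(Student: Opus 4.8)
The plan is to pass from lattice-point counting to a generating-function identity. Write $d = \dim(P)$. The goal is to show that the \emph{Ehrhart series} $\sum_{t \ge 0} \ehr{P}(t)\, z^{t}$ is a rational function of the form $h(z)/(1-z)^{d+1}$ with $h \in \Z[z]$ of degree at most $d$; extracting the coefficient of $z^{t}$ then exhibits $\ehr{P}(t)$ as a fixed polynomial in $t$, valid for every $t \in \Zp$.

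The first step is a reduction to simplices. Because $P$ is integral, any triangulation of $P$ whose vertices are vertices of $P$ writes $P$ as a union of $d$-dimensional lattice simplices $\Delta_{1}, \dotsc, \Delta_{m}$ meeting along common faces. Taking the triangulation to be regular, one passes to the associated \emph{half-open decomposition}: a disjoint decomposition $P = \bigsqcup_{j} \widetilde{\Delta}_{j}$ in which each $\widetilde{\Delta}_{j}$ is obtained from $\Delta_{j}$ by deleting some subset of its facets (see, e.g., \cite{BecRob2015}). Dilation respects this decomposition, so $\ehr{P} = \sum_{j} \ehr{\widetilde{\Delta}_{j}}$ as functions on $\Zp$.

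The second step treats a single half-open lattice simplex. Given $\Delta$ with vertices $v_{0}, \dotsc, v_{d} \in \Z^{d}$, lift to $w_{i} \deftobe (v_{i}, 1) \in \Z^{d+1}$ and consider the simplicial cone $C = \setof{\sum_{i} \lambda_{i} w_{i} \st \lambda_{i} \ge 0}$, graded by the last coordinate. For each $t$, the lattice points of $t\Delta$ correspond to the degree-$t$ lattice points of $C$; deleting from $C$ the facets matching the deleted facets of $\Delta$ gives a half-open cone in which every lattice point is uniquely $p + \sum_{i} n_{i} w_{i}$ with $n_{i} \in \Znn$ and $p$ ranging over the (finitely many) lattice points of a bounded half-open fundamental parallelepiped $\Pi$, each with last coordinate in $\setof{0, 1, \dotsc, d}$. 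Hence
\[
\sum_{t \ge 0} \ehr{\widetilde{\Delta}}(t)\, z^{t}
\;=\;
\frac{\sum_{p \in \Pi \cap \Z^{d+1}} z^{p_{d+1}}}{(1-z)^{d+1}},
\]
with numerator of degree at most $d$. Summing over $j$ yields the desired form $h(z)/(1-z)^{d+1}$ with $\deg h \le d$.

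The final step is coefficient extraction. From $(1-z)^{-(d+1)} = \sum_{t \ge 0} \binom{t+d}{d}\, z^{t}$ one obtains $\ehr{P}(t) = \sum_{j} h_{j} \binom{t - j + d}{d}$, and since $\deg h \le d$ the vanishing $\binom{t-j+d}{d} = 0$ for $t < j \le t + d$ makes this a genuine polynomial identity for all $t \in \Zp$. (Alternatively, one can extract only that $\ehr{P}$ agrees with a fixed polynomial for $t \gg 0$ and then invoke \cref{thm: Ehrhart's theorem}: a quasipolynomial that agrees with a single polynomial at infinitely many integers in each residue class must equal it.) I expect the principal obstacle to lie in the first two steps: arranging the triangulation and the half-opening so that each lower-dimensional face shared among several simplices is counted exactly once, and checking that every resulting piece really is a half-open lattice simplex with the finite fundamental parallelepiped used above. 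Once this combinatorial setup is secured, the rest is formal power-series manipulation.
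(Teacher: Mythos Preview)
The paper does not prove \cref{thm: Ehrhart's theorem for integral polytopes}; it is quoted as a classical result of Ehrhart (with the reader directed to \cite{BecRob2015} for background) and used without argument. So there is no ``paper's own proof'' to compare against.

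Your sketch is the standard modern proof, essentially the one in \cite{BecRob2015}: triangulate, pass to a half-open decomposition so that pieces are disjoint, cone over each half-open lattice simplex, and read off the Ehrhart series from the fundamental parallelepiped. The argument is correct as outlined. A couple of small comments: your phrase ``taking the triangulation to be regular'' is not needed for the half-open decomposition---any triangulation admits such a partition (e.g.\ via a generic linear functional or the shelling-based argument), so you need not invoke regularity. Also, the worry you flag about lower-dimensional faces being counted once is precisely what the half-open decomposition handles, and in your cone picture this is automatic because the parallelepiped $\Pi$ is already taken half-open to match; so the ``principal obstacle'' you anticipate is already dissolved by the setup you describe. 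The coefficient-extraction step is fine as written: since $\deg h \le d$, the terms with $j>t$ contribute $\binom{t-j+d}{d}=0$, and the displayed sum is a genuine polynomial identity for all $t \ge 0$.
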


This property of integral polytopes motivates the following
definition.

\begin{definition}
    A \defing{pseudointegral polytope} (or \defing{PIP}) is a polytope
    $P \subset \R^{d}$ such that $\ehr{P}$ is a polynomial.
\end{definition}

A PIP $P$ is \defing{polygonal}, respectively \defing{triangular}, if
$P$ is a polygon, respectively triangle, in $\R^{2}$.  In the case of
a rational polygonal PIP $P$, the Ehrhart polynomial of~$P$ is
determined by the pair $(\intp{P}, \bndp{P})$ and vice versa.  This
follows from the following characterization of polygonal PIPs.

\begin{theorem}[\protect{\cite[Theorem~3.1]{McAWoo2005}}]
\label{thm: PIP criteria}%
    Let $P \subset \R^{2}$ be a rational polygon.  Then the following 
    are equivalent.
    \begin{enumeratetfae}
        \item
        $P$ is a PIP.
        
        \item
        $
            \ehr{P}(t) 
                =%
                \area{P} t^{2}
                +
                \tfrac{1}{2} \bndp{P} t 
                +
                1.
        $
        
        \item
        $\bndp{tP} = t \bndp{P}$ and $tP$ satisfies Pick's formula
        $\area{tP} = \intp{tP} + \tfrac{1}{2} \bndp{tP} - 1$ for all
        $t \in \Zp$.
    \end{enumeratetfae}
\end{theorem}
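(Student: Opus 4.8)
The plan is to deduce all three equivalences from two standard facts about Ehrhart theory in dimension two. First, since the polygon $P$ is two-dimensional, $\aff(P) = \R^{2}$, so every dilate $tP$ is reticular; hence \cref{thm: Ehrhart's theorem} tells us that $\ehr{P}$ agrees with a quasipolynomial of degree $2$ whose leading coefficient is the \emph{constant} $\relvol(P) = \area{P}$. Second, because $\dim(P) = 2$, Ehrhart--Macdonald reciprocity reads $\ehr{\intr{P}}(t) = \ehr{P}(-t)$; combining this with the evident decompositions $\ehr{P}(t) = \intp{tP} + \bndp{tP}$ and $\ehr{\intr{P}}(t) = \intp{tP}$ yields the relation $\bndp{tP} = \ehr{P}(t) - \ehr{P}(-t)$ for every $t \in \Zp$, where $\ehr{P}(-t)$ denotes the canonical polynomial extension. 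With these in hand, the implication (ii) $\Rightarrow$ (i) is immediate, since the right-hand side of the formula in (ii) is visibly a polynomial in $t$.

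Next I would dispatch the equivalence of (ii) and (iii) by direct computation. Assuming (ii), the extension $\ehr{P}(-t) = \area{P}\,t^{2} - \tfrac{1}{2}\bndp{P}\,t + 1$ equals $\intp{tP}$ by reciprocity, so the relation above gives $\bndp{tP} = t\,\bndp{P}$, and substituting these two expressions into the right-hand side of Pick's formula for $tP$ returns $\area{P}\,t^{2} = \area{tP}$; thus (iii) holds. Conversely, assuming (iii), I would substitute $\intp{tP} = \area{tP} - \tfrac{1}{2}\bndp{tP} + 1$ into $\ehr{P}(t) = \intp{tP} + \bndp{tP}$ and use $\area{tP} = t^{2}\area{P}$ together with $\bndp{tP} = t\,\bndp{P}$ to recover exactly the formula in (ii). Both directions are routine bookkeeping once the notation is set up.

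The real work is in (i) $\Rightarrow$ (ii). Writing $\ehr{P}(t) = \alpha t^{2} + \beta t + \gamma$ (a genuine polynomial, by hypothesis), the first fact forces $\alpha = \area{P}$, and applying the relation above to $\ehr{P}(-t) = \alpha t^{2} - \beta t + \gamma$ gives $\bndp{tP} = 2\beta t$ for all $t \in \Zp$; evaluating at $t = 1$ yields $\beta = \tfrac{1}{2}\bndp{P}$. The one remaining point — and the step I expect to require the most care — is showing $\gamma = 1$: the constant term of the Ehrhart quasipolynomial of a \emph{rational} polytope is in general not $1$, so this cannot be read off directly. Here I would pass to the integral dilate $kP$, where $k \deftobe \den(P)$, exploit $\ehr{P}(ks) = \ehr{kP}(s)$, and invoke Pick's theorem for the integral polygon $kP$ to see that $\ehr{kP}(s)$ has constant term $1$; comparing constant terms of $\ehr{P}(ks) = \alpha k^{2}s^{2} + \beta k s + \gamma$ and $\ehr{kP}(s)$ then gives $\gamma = 1$, completing the proof of (ii).

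The only genuine obstacle, then, is not conceptual but a matter of handling the periodic coefficients correctly: one must use that ``$\ehr{P}$ is a polynomial'' forces the quasipolynomial to have trivial period, so that all of its coefficient functions are constant; one must keep the reciprocity-based extension to negative $t$ straight throughout; and one must remember that the normalization ``constant term $= 1$'' is a property of integral polygons via Pick's theorem, not of rational ones, which is precisely why the dilation argument is needed in the last step.
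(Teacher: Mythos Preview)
Your argument is correct. The paper does not supply its own proof of this theorem; it is quoted from \cite{McAWoo2005} and used as a black box, so there is nothing to compare your approach against in the present paper.

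A few comments on points you flagged as delicate. Your caution about the constant term is warranted: for a general rational polytope the constant coefficient function $c_{0}(t)$ of the Ehrhart quasipolynomial need \emph{not} be identically~$1$ (e.g.\ $P = [0,\tfrac{1}{2}] \subset \R$ has $c_{0}(t) = 1$ for even $t$ and $c_{0}(t) = \tfrac{1}{2}$ for odd $t$), so the dilation trick via $kP$ is genuinely needed. Your use of reciprocity to extract $\bndp{tP} = \ehr{P}(t) - \ehr{P}(-t)$ also relies on the fact that when $\ehr{P}$ agrees with a polynomial on $\Zp$, the canonical quasipolynomial extension to negative integers coincides with the polynomial extension; this is true because a quasipolynomial vanishing on all of $\Zp$ must vanish on each residue class and hence be identically zero, so the quasipolynomial \emph{is} the polynomial. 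You gesture at this in the final paragraph, and it is worth making explicit, but there is no gap.
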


Every integral polytope is a PIP by \cref{thm: Ehrhart's theorem for
integral polytopes}.  However, infinitely many non-integral PIPs exist.
Indeed, even irrational PIPs exist \cite{CriLiSta2019}.  Moreover, the
set of Ehrhart polynomials includes polynomials that arise only as the
Ehrhart functions of non-integral PIPs; see \cref{exm: PIPs with 1 or 2
boundary points} below.

\begin{theorem}[\protect{\cite[Theorem~1.2]{McAMor2017}}]
    Given integers $i \ge 1$ and $b \in \setof{1, 2}$, there exists a
    polygonal PIP $P$ with $(\intp{P}, \bndp{P}) = (i, b)$.  However,
    there does not exist a polygonal PIP $P$ with $\bndp{P} = 0$.
\end{theorem}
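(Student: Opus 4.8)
The statement has two parts, which I would treat separately.

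\textbf{Impossibility of $\bndp{P} = 0$.} Here is the reformulation I would lean on. For any rational polygon $P$, since $\intr{(tP)} = t\intr{P}$ for $t>0$ and $\dim P = 2$, Ehrhart--Macdonald reciprocity yields
\begin{equation*}
    \bndp{tP} \;=\; \ehr{P}(t) - \ehr{\intr{P}}(t) \;=\; \ehr{P}(t) - \ehr{P}(-t)
    \qquad \text{for all } t \in \Zp .
\end{equation*}
If $P$ is moreover a polygonal PIP, write $\ehr{P}(t) = at^{2} + bt + c$; then the right side equals $2bt$, so $\bndp{tP} = t\,\bndp{P}$ with $\bndp{P} = 2b$. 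Hence $\bndp{P} = 0$ forces $b = 0$ and $\ehr{P}(t) = at^{2} + c$ with leading coefficient $a = \area{P} > 0$. Now put $d \deftobe \den(P)$, so that $dP$ is a $2$-dimensional integral polygon; its (at least three) vertices are lattice points on $\bnd{(dP)}$, so $\bndp{dP} \ge 3$. But criterion~(iii) of \cref{thm: PIP criteria} gives $\bndp{dP} = d\,\bndp{P} = 0$, a contradiction. (If one does not wish to take for granted that a polygonal PIP is rational, I would invoke \cite{McAMor2017} for that sub-case.)

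\textbf{Existence, and the main obstacle.} By criteria~(ii) and~(iii) of \cref{thm: PIP criteria}, a rational polygonal PIP $P$ with $(\intp{P},\bndp{P}) = (i,b)$ is the same as a rational polygon $P$ with $\bndp{P} = b$ such that $\bndp{tP} = tb$ and $tP$ satisfies Pick's formula for every $t \in \Zp$; equivalently, a rational polygon with Ehrhart polynomial $(i + \tfrac{b}{2} - 1)t^{2} + \tfrac{b}{2}t + 1$. I would exhibit explicit such polygons --- triangles suffice --- for each $i \ge 1$ and $b \in \{1,2\}$. The design principles are in tension: to hold $\bndp{P}$ down to $1$ or $2$, the triangle should have at most $b$ lattice vertices and no lattice points in the relative interior of any edge (so every non-primitive edge has lattice length $< 1$); but to carry $i \ge 1$ interior lattice points and area $i + \tfrac{b}{2} - 1$ it must be ``fat'', hence long and steeply slanted, with one vertex of large denominator $q_{i}$. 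Such a geometry is easy to arrange (for $b = 2$, e.g.\ $\conv\{(0,0),(1,0),(p_{i}/q_{i},\,2i)\}$ with $p_{i},q_{i}$ coprime and chosen suitably coprime to $2i$, so that the two slanted edges carry no interior lattice points); the substance is in choosing the remaining parameters so that $P$ is actually a PIP.

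Given such a triangle $T_{i}$, criterion~(iii) of \cref{thm: PIP criteria} leaves two things to check for each $t$. The identity $\bndp{tT_{i}} = tb$ is the easy one: $\bnd{(tT_{i})}$ is a union of three segments of rational direction, on each of which the lattice-point count is an elementary floor function of $t$, and one checks that the three contributions sum identically to $tb$. The hard part will be showing that $tT_{i}$ satisfies Pick's formula for \emph{every} $t$ --- equivalently, by the reformulation above, that $\ehr{T_{i}}$ is a polynomial with no quasi-period, i.e.\ that the Ehrhart quasipolynomial of the denominator-$q_{i}$ triangle $T_{i}$ undergoes \emph{complete} period collapse. Writing $\ehr{T_{i}}(t)$ as a sum over the horizontal rows of $tT_{i}$, each row $y = k$ contributing $t + 1 + \floor{\alpha k} - \ceil{\beta k}$ lattice points for rationals $\alpha,\beta$ determined by the apex, this reduces to a Dedekind-sum-type floor-summation identity that must hold identically in $t$; making that identity true is exactly what pins down the admissible values of $q_{i}$ as a function of $i$. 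The naive guesses fail --- for instance $\conv\{(0,0),(1,0),(\tfrac{1}{2},2i)\}$ has $\bndp{} = 2$ but contains no interior lattice point whatsoever, so its Ehrhart function cannot be $it^{2}+t+1$; and other flat triangles have $\bndp{tT} = 2t$ yet retain a period-$2$ Ehrhart quasipolynomial. Verifying the floor-summation identity for the chosen family, and then reading off $\intp{T_{i}} = i$, is the crux; the rest is bookkeeping.
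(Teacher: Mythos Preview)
Your impossibility argument for $\bndp{P} = 0$ is correct and essentially standard; it is exactly what criterion~(iii) of \cref{thm: PIP criteria} packages. Note that the paper does not prove this theorem at all: it is quoted from \cite{McAMor2017}, and the explicit triangles are simply recorded in \cref{exm: PIPs with 1 or 2 boundary points}.

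The existence half of your proposal, however, is not a proof but a proof \emph{plan}, and the step you yourself identify as ``the crux'' is left undone. For $b=2$ you give only a template $\conv\{(0,0),(1,0),(p_{i}/q_{i},2i)\}$ with $p_{i},q_{i}$ unspecified; you never exhibit values that work, never verify the floor-sum identity, and never show that admissible parameters exist at all. For $b=1$ you give no candidate whatsoever. Writing ``making that identity true is exactly what pins down the admissible values of $q_{i}$'' is a description of the remaining problem, not its solution --- and your own examples of failed naive guesses show the step is not automatic. By contrast, the paper (via \cite{McAMor2017}) writes down specific triangles ---
\[
\conv\Bigl\{(i,0),\,\bigl(-1,\pm\tfrac{i}{i+1}\bigr)\Bigr\}
\quad\text{for }b=2,\qquad
\conv\Bigl\{(i,0),\,\bigl(\tfrac{-2i}{2i+1},\tfrac{2i-1}{2i+1}\bigr),\,\bigl(\tfrac{-1}{2i+1},\tfrac{-(2i-1)}{2i+1}\bigr)\Bigr\}
\quad\text{for }b=1
\]
--- and the cited reference verifies each is a PIP directly. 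To turn your outline into a proof you would need either to produce such explicit families and check the PIP condition, or to prove an existence theorem for the parameters; neither is present.
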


The polygons constructed in \cite{McAMor2017} to prove this theorem
appear in slightly modified form in \cref{exm: PIPs with 1 or 2
boundary points} below.

\begin{example}\label{exm: PIPs with 1 or 2 boundary points}\mbox{}
    \begin{itemize}
        \item  
        For $i \in \Zp$, let
        $%
            P
                \deftobe%
                \conv
                \bigsetof{
                    (i, 0),
                    \bigparens{\frac{-2i}{2i+1}, \frac{2i-1}{2i+1}},
                    \bigparens{\frac{-1}{2i+1}, -\frac{2i-1}{2i+1}}
                }
        $. %
        Then $P$ is a PIP with ${\intp{P} = i}$ and ${\bndp{P} = 1}$.
        See \cref{subfig: PIP with b=1} for the $i = 2$ case.

        \item  
        For $i \in \Zp$, let 
        $
            P
                \deftobe%
                \conv
                \bigsetof{
                    (i, 0),
                    \bigparens{-1, \pm \frac{i}{i+1}}
                }
        $. %
        Then $P$ is a PIP with $\intp{P} = i$ and $\bndp{P} = 2$.  See
        \cref{subfig: PIP with b=2} for the $i=2$ case.
    \end{itemize}
    The triangles in these families all have Ehrhart polynomials that
    are not the Ehrhart polynomial of any integral polygon, since no
    integral polygon has fewer than $3$ lattice points on its
    boundary.
    
    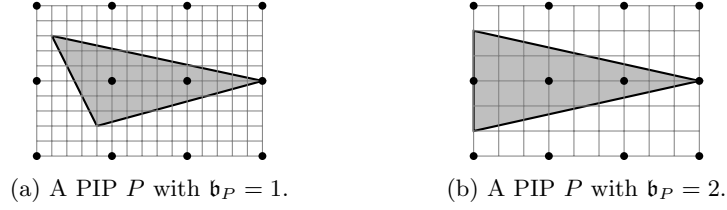
\begin{figure}
        \begin{subfigure}{0.45\textwidth}
            \centering
            \begin{tikzpicture} 
                \drawlatticegrid{-1}{2}{-1}{1}{5}
                \drawtriangle{(2,0)}{(-4/5,3/5)}{(-1/5,-3/5)}
            \end{tikzpicture}
            \caption{A PIP $P$ with $\bndp{P} = 1$.}
            \label{subfig: PIP with b=1}
        \end{subfigure}
        \begin{subfigure}{0.45\textwidth}
            \centering
            \begin{tikzpicture} 
                \drawlatticegrid{-1}{2}{-1}{1}{3}
                \drawtriangle{(2,0)}{(-1,2/3)}{(-1,-2/3)}
            \end{tikzpicture}
            \caption{A PIP $P$ with $\bndp{P} = 2$.}
            \label{subfig: PIP with b=2}
        \end{subfigure}%
        \caption{PIPs with $1$ or $2$ boundary lattice points.}
        \label{fig: PIPs with b=1 or b=2}
    \end{figure}
\end{example}

\begin{definition}
    A nonzero lattice vector $u = (\alpha_{1}, \dotsc, \alpha_{d}) \in
    \Z^{d}$ is \defing{primitive} if ${\gcd\setof{\alpha_{1}, \dotsc,
    \alpha_{d}} = 1}$, \emph{i.e.}, if there is no lattice point in
    the open segment $(0, u)$.
\end{definition}

\begin{definition}
    Let $X \subset \R^{d}$ be such that $\aff(X)$ is a hyperplane of
    the form $\aff(X) = \setof{a \in \R^{d} \st \inner{u, a} =
    \ldist{X}}$ for some nonzero primitive lattice vector $u \in
    \Z^{d}$ and $\ldist{X} \in \Rnn$.  Then $\ldist{X}$ is the
    \defing{lattice distance} from $X$ to the origin.  The lattice
    distance from $X$ to any point $p \in \R^{d}$ is defined to be the
    lattice distance from $X - p$ to the origin.
\end{definition}

Note that, for $X \subset \R^{d}$ such that $\aff(X)$ is a rational
hyperplane, the set $X$ is reticular if and only if $\ldist{X} \in
\Z$.

\begin{definition}
    The \defing{dual} (also called the \defing{polar}) of~$P$ is
    \begin{equation*}
        \dualp{P}
            \deftobe %
            \setof{%
                u \in \R^{d} %
                \st %
                \text{%
                    $\inner{u, a} \le 1$ for all $a \in P$%
                }%
            }.%
    \end{equation*}
\end{definition}

The dual $\dualp{P}$ is a polytope if and only if $\dim(P) = d$ and $0
\in \intr{P}$, in which case $\dualp{(\dualp{P})} = P$.  If moreover
$P$ is rational and $u_{F}$ is the primitive outer normal vector at
each facet $F$ of $P$, so that
\begin{equation*}
    P
        =
        \setof{%
            a \in \R^{d}
            \st
            \text{%
                $\inner{u_{F}, a} \le \ldist{F}$ for all facets $F$ of $P$
            }%
        },%
\end{equation*}
then
\begin{equation*}
    \dualp{P} 
        =%
        \conv\setof{
            \tfrac{1}{\ldist{F}} u_{F}
            \st 
            \text{$F$ is a facet of $P$}
        }.%
\end{equation*}
Thus, $\dualp{P}$ is integral if and only if $\ldist{F}^{-1} \in \Z$
for each facet $F$ of $P$.

\begin{definition}
    A \defing{reflexive} polytope is an integral polytope $P \subset
    \R^{d}$ such that $\dualp{P}$ is also an integral polytope.
\end{definition}
 
In particular, a full-dimensional integral polytope $P$ with $0 \in
\intr{P}$ is reflexive if and only if every facet of $P$ is at lattice
distance $1$ from the origin.  It is well-known that the reflexive
polygons in $\R^{2}$ are precisely the integral polygons in which the
only interior lattice point is the origin.

\section{Pseudoreflexive polygons}
\label{sec: pseudoreflexive polygons}

By analogy with reflexive polytopes, we call a PIP $P$ pseudoreflexive
if the dual of $P$ is an integral polytope.

\begin{definition}
    A \defing{pseudoreflexive} polytope, or a \defing{PRP} for short,
    is a PIP ${P \subset \R^{d}}$ such that $\dualp{P}$ is an integral
    polytope.
\end{definition}

In this section, we show that, similarly to the case of reflexive
polygons, the pseudoreflexive polygons are precisely the
pseudointegral polygons in which the only interior lattice point is
the origin.  This result recently appeared independently in
\cite[Corollary~5.6]{Boh2024preprint}.  We include a proof here for
completeness.

Let $P \subset \R^{d}$ be a polytope.  It will be convenient to
partition the boundary of $P$ into ``partially open facets''.  In
order to make this notion precise, let~$\F$ be the set of facets of
$P$, let $\E$ be the set of nonempty faces of $P$, and write $\Fi
\deftobe \setof{\intr{F} \st F \in \F}$, respectively $\Ei \deftobe
\setof{\intr{E} \st E \in \E}$, for the set of open facets,
respectively open faces, of~$P$.  (Note that a vertex of $P$ is its
own relative interior.)  A \defing{boundary-partition map} of~$P$ is a
map ${\pi \maps \Ei \to \Fi}$ such that ${\pi(\intr{E}) = \intr{F}}$
implies that $E$ is a face of $F$, for all $E \in \E$ and $F \in \F$.
(In particular, $\pi(F) = F$ for all $F \in \F$.)
Such a map $\pi$ induces a \defing{boundary partition} $\B_{\pi}
\deftobe \bigsetof{\bigcup_{\intr{E} \in \pi^{-1}(\intr{F})} \intr{E}
\st F \in \F}$ of $P$.  Thus, $\B_{\pi}$ is a set partition of
$\bnd{P}$ in which the closure of each part is a facet of $P$.  We
call the elements of $\B_{\pi}$ \defing{partially open facets} of $P$.

For example, if $P$ is an $n$-gon in $\R^{2}$ with vertices $v_{1},
\dotsc, v_{n}$ indexed counter\-clockwise with indices modulo $n$,
then the set $\setof{\ropen{v_{i}, v_{i+1}} \st i \in \Zmod{n}}$ of
half-open edges of $P$ is a boundary partition of $P$.  As another
example, if $P$ is a $d$-polytope, then a shelling order
$\parens{F_{1}, \dotsc, F_{n}}$ of the facets of $P$ yields the
boundary-partition map~$\pi$ according to which $\pi(\intr{E}) =
\intr{F_{i}}$ when $i = \min \setof{j \in [n] \st E \subset F_{j}}$.
(However, we do not require that our boundary partitions arise from
shellings in this way.)

It is a straightforward corollary of \cref{thm: Ehrhart's theorem} and
Ehrhart reciprocity that, if $P$ is a rational polytope with a
boundary partition $\B$, then the Ehrhart function $\ehr{F}$ of each
partially open facet $F \in \B$ is a degree\nobreakdash-$(\dim(P) -
1)$ quasipolynomial in which the leading coefficient $c_{\dim(F)}(t)$
equals $\relvol(F)$ when $tF$ is reticular and equals~$0$ otherwise.

\begin{proposition}
\label{prop: ever facet of a PIP is reticular}
    Let $P \subset \R^{d}$ be a $d$-dimensional rational PIP (or, more
    generally, a rational polytope such that $\bndp{tP} = t \bndp{P}$
    for all $t \in \Zp$).  Then every facet of $P$ is reticular,
    \emph{i.e.,} the lattice distance of every facet from the origin
    is an integer.
\end{proposition}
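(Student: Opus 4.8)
The plan is to analyze the Ehrhart quasipolynomial of a single partially open facet and show that if its lattice distance from the origin were nonintegral, then the boundary point count $\bndp{tP}$ could not grow linearly in $t$. Fix a boundary partition $\B$ of $P$, so that $\bnd{P}$ is the disjoint union of the partially open facets $F \in \B$, and hence $\bndp{tP} = \sum_{F \in \B} \ehr{tF}(1) = \sum_{F \in \B} \ehr{F}(t)$ for all $t \in \Zp$. By the corollary to Ehrhart's theorem noted just before the proposition, each $\ehr{F}$ is a quasipolynomial of degree $\dim(P)-1 = d-1$ whose leading coefficient at $t$ equals $\relvol(F)$ if $tF$ is reticular and $0$ otherwise. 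Since $\relvol(F) > 0$, the degree of $\ehr{F}(t)$ is exactly $d-1$ precisely for those $t$ with $tF$ reticular, and is strictly smaller for the other $t$.

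Now fix a facet $F$ of $P$ and let $\beta \deftobe \ldist{F}$ be its lattice distance from the origin, so $\aff(F) = \setof{a \st \inner{u,a} = \beta}$ for the primitive outer normal $u$ of $F$; note $\beta > 0$ since $0 \in \intr{P}$. The key observation is that $tF$ is reticular if and only if $t\beta \in \Z$: indeed $\aff(tF) = \setof{a \st \inner{u,a} = t\beta}$, and since $u$ is primitive this hyperplane meets $\Z^d$ exactly when $t\beta$ is an integer. Write $\beta = p/q$ in lowest terms. Then $tF$ is reticular iff $q \mid t$. So among the residues $t \equiv 0, 1, \dotsc, q-1 \pmod q$, the leading ($(d-1)$st) coefficient of $\ehr{F}(t)$ is $\relvol(F) > 0$ on the residue class $t \equiv 0$ and is $0$ on every other residue class. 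Summing over $F \in \B$: the coefficient of $t^{d-1}$ in $\bndp{tP}$, as a function of $t$, on a given residue class mod $\lcm$ of the denominators, equals $\sum \relvol(F)$ over those $F$ whose denominator divides $t$. If some facet $F_0$ has nonintegral $\beta$ (i.e.\ its denominator $q_0 \ge 2$), then this sum is strictly larger on the residue class $t \equiv 0 \pmod{q_0}$ than on some residue class where $q_0 \nmid t$, because $\relvol(F_0) > 0$ is included in the former but not the latter. Hence the $t^{d-1}$-coefficient of $\bndp{tP}$ is not constant in $t$, so $t \mapsto \bndp{tP}$ cannot be a polynomial; in particular it cannot equal $t \bndp{P}$, a contradiction with the hypothesis. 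Therefore every facet has integral lattice distance, i.e.\ is reticular.

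The only real subtlety — the step I'd expect to need the most care — is making the ``$t^{d-1}$-coefficient is nonconstant'' argument airtight when $d-1 = 1$ (the case of actual interest here, $d = 2$), since then the relevant coefficient is the linear coefficient $\tfrac12 \bndp{P}$ of the putative polynomial $t\bndp{P}$, and one must be sure the lower-order ($t^{d-2}$, i.e.\ constant) terms can't conspire to rescue linearity. This is handled cleanly by the residue-class bookkeeping above: a quasipolynomial that agrees with a genuine polynomial on all of $\Zp$ must have all its periodic coefficient-functions constant, so it suffices to exhibit \emph{two} values of $t$ in different residue classes mod $q_0$ at which the degree-$(d-1)$ part of $\bndp{tP}$ differs, which the displayed computation does. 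One should also note at the outset that the parenthetical hypothesis ``$\bndp{tP} = t\bndp{P}$ for all $t$'' is exactly what a PIP satisfies by \cref{thm: PIP criteria}(iii), so the proposition as stated for PIPs follows from the more general statement.
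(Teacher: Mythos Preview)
Your approach is essentially the paper's: partition the boundary, use that each $\ehr{F}$ has leading coefficient $\relvol(F)$ or $0$ according as $tF$ is reticular, and derive a contradiction with polynomiality of $\ehr{\bnd{P}}$ if some facet is non-reticular. The paper phrases the endgame slightly differently---it shows $\sum_{F \in \RRbar} \ehr{F} = 0$ and then invokes nonnegativity of leading coefficients---but your direct comparison of the $(d-1)$st coefficient on two residue classes is equivalent and arguably cleaner.

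There is, however, a genuine gap in your last sentence. \Cref{thm: PIP criteria} is stated only for rational \emph{polygons}, so your reduction of the PIP hypothesis to the linear-growth hypothesis $\bndp{tP} = t\,\bndp{P}$ is valid only when $d = 2$. For $d \ge 3$ a PIP certainly does \emph{not} satisfy $\bndp{tP} = t\,\bndp{P}$ (the boundary count has degree $d-1$, not $1$), so as written you have not handled the PIP case of the proposition in higher dimensions. The fix is easy and is exactly what the paper does: all your argument actually needs is that $\ehr{\bnd{P}}$ is a polynomial, and for a PIP in any dimension this follows from Ehrhart--Macdonald reciprocity via $\ehr{\bnd{P}}(t) = \ehr{P}(t) - \ehr{\intr{P}}(t) = \ehr{P}(t) - (-1)^{d}\ehr{P}(-t)$.

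Two small slips that do not affect the argument: you write ``$\beta > 0$ since $0 \in \intr{P}$'', but the proposition does not assume $0 \in \intr{P}$ (and you do not need $\beta > 0$; if $\ldist{F} = 0$ the facet is already reticular). Also, the linear coefficient of $t\,\bndp{P}$ is $\bndp{P}$, not $\tfrac{1}{2}\bndp{P}$.
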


\begin{proof}
    Fix a boundary partition $\B$ of $P$.  Let $\RR \deftobe \setof{F
    \in \B \st \text{$F$ is reticular}}$ and $\RRbar \deftobe \B
    \setminus \RR$.  We must show that $\RRbar = \emptyset$.  Now, on
    the one hand, $\ehr{\bnd{P}} = \smash{\sum_{F \in \B} \ehr{F}}$
    because the partially open facets in $\B$ partition $\bnd{P}$.  On
    the other hand, for each $F \in \RRbar$, we have that $tF \cap
    \Z^{d} = \emptyset$ whenever $\den(\ldist{F}) \notdivides t \in
    \Zp$, so $\ehr{\bnd P}(t) = \smash{\sum_{F \in \RR} \ehr{F}(t)}$
    for each of the infinitely many values of $t \in \Zp$ such that
    $\den(\ldist{F}) \notdivides t$ for all $F \in \RRbar$.  In
    particular, $\sum_{F \in \B} \ehr{F}(t) = \ehr{\bnd{P}}(t) =
    \sum_{F \in \RR} \ehr{F}(t)$ for infinitely many values of $t \in
    \Zp$.  However, since $P$ is a PIP, we get from Ehrhart--Macdonald
    reciprocity that $\ehr{\bnd{P}} = \ehr{P} - \ehr{\intr{P}}$ is a
    polynomial.  Thus, $\sum_{F \in \B} \ehr{F} = \sum_{F \in \RR}
    \ehr{F}$ (now as an equation of polynomials), and so $\sum_{F \in
    \RRbar} \ehr{F} = 0$.  Since the quasipolynomials $\ehr{F}$ all
    have nonnegative leading coefficient functions, it follows that
    $\RRbar = \emptyset$.
\end{proof}

\begin{proposition}[Proved independently in~\protect{\cite[Corollary~5.6]{Boh2024preprint}}]
\label{prop: characterization of PIPS with i = 1} %
    Let $P \subset \R^{2}$ be a $2$-dimensional PIP with $0 \in
    \intr{P}$.
    Then the following conditions are equivalent.
    \begin{enumeratetfae}
        \item  
        $\intr{P} \cap \Z^{2} = \setof{0}$.
        \label{cond: i=1}%
    
        \item  
        Every edge of $P$ is at lattice distance $1$ from the origin.
        \label{cond: edges are lattice distance 1}%
    
        \item  
        $\dualp{P}$ is integral (meaning that $P$ is a PRP).
        \label{cond: PRP}%
    \end{enumeratetfae}
\end{proposition}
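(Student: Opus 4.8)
The plan is to prove the cycle of implications (i) $\Rightarrow$ (ii) $\Rightarrow$ (iii) $\Rightarrow$ (ii) $\Rightarrow$ (i), so that all three become equivalent. Throughout I would use \cref{prop: ever facet of a PIP is reticular}: since $P$ is a PIP, every edge $F$ of $P$ has integral lattice distance $\ldist{F}$ from the origin, and since $0 \in \intr{P}$ we in fact have $\ldist{F} \in \Zp$ for every edge $F$. The substance of the argument is concentrated in the implication (i) $\Rightarrow$ (ii); the other implications are short.

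For (i) $\Rightarrow$ (ii), I would argue by a relative-volume identity. Assuming $\intr{P} \cap \Z^{2} = \setof{0}$, \cref{thm: PIP criteria} gives $\ehr{P}(t) = \area{P}\,t^{2} + \tfrac{1}{2}\bndp{P}\,t + 1$, so Ehrhart--Macdonald reciprocity gives $\ehr{\intr{P}}(t) = \area{P}\,t^{2} - \tfrac{1}{2}\bndp{P}\,t + 1$; evaluating at $t = 1$ and using $\intp{P} = 1$ forces $\area{P} = \tfrac{1}{2}\bndp{P}$. Next, triangulating $P$ from the interior point $0$ writes $P$ as the union of the triangles $\conv(\setof{0} \cup F)$ over the edges $F$ of $P$, with pairwise disjoint interiors, so $\area{P} = \sum_{F} \tfrac{1}{2}\llength{F}\,\ldist{F}$. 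Finally, partitioning $\bnd{P}$ into its half-open edges $\ropen{v_{i}, v_{i+1}}$ and using that $\ehr{\bnd{P}} = \ehr{P} - \ehr{\intr{P}} = \bndp{P}\, t$ is a polynomial, I would compare the coefficient of $t$ on the two sides of $\ehr{\bnd{P}}(t) = \sum_{i} \ehr{\ropen{v_{i},v_{i+1}}}(t)$: each half-open edge $\ropen{v_{i},v_{i+1}}$ lies on an edge $F_{i}$ at integral lattice distance, so $t\ropen{v_{i},v_{i+1}}$ is reticular for all $t$, and its Ehrhart quasipolynomial has leading coefficient $\llength{F_{i}}$; hence $\bndp{P} = \sum_{F}\llength{F}$. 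Combining the two expressions for $\area P$ with $\area P = \tfrac12\bndp P$ yields $\sum_{F}\llength{F}\bigparens{\ldist{F} - 1} = 0$, and since $\llength{F} > 0$ and $\ldist{F} - 1 \ge 0$ for each edge, every term vanishes, so $\ldist{F} = 1$ for all edges $F$, i.e.\ (ii) holds.

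The remaining implications are routine. For (ii) $\Leftrightarrow$ (iii): if every edge is at lattice distance $1$, the vertices $\ldist{F}^{-1}u_{F} = u_{F}$ of $\dualp{P}$ are primitive lattice vectors, so $\dualp{P}$ is integral; conversely, if $\dualp{P}$ is integral then $\ldist{F}^{-1}u_{F} \in \Z^{2}$ for every edge $F$, and primitivity of $u_{F}$ forces $\ldist{F}^{-1} \in \Z$, which together with $\ldist{F} \in \Zp$ gives $\ldist{F} = 1$. For (ii) $\Rightarrow$ (i): if $v \in \intr{P} \cap \Z^{2}$, then for every edge $F$ we have $\inner{u_{F}, v} \in \Z$ and $\inner{u_{F}, v} < \ldist{F} = 1$, hence $\inner{u_{F}, v} \le 0$; therefore $\inner{u_{F}, nv} \le 0 \le \ldist{F}$ for all $n \in \Znn$, so $\setof{nv \st n \in \Znn} \subseteq P$, and since $P$ is bounded this forces $v = 0$.

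The main obstacle is making the two structural identities in the second paragraph fully rigorous, namely that $\conv(\setof{0} \cup F)$ has relative volume $\tfrac{1}{2}\llength{F}\,\ldist{F}$ and that the half-open edges of the $n$-gon $P$ form a boundary partition whose pieces have the stated leading Ehrhart coefficients. Both are routine given the framework set up in \cref{sec: pseudoreflexive polygons}, but the second genuinely relies on \cref{prop: ever facet of a PIP is reticular} to ensure that every dilate $t\ropen{v_{i},v_{i+1}}$ is reticular; without integrality of $\ldist{F}$ the leading coefficient would drop to $0$ for some $t$ and the coefficient comparison would fail.
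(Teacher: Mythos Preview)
Your proof is correct and follows essentially the same route as the paper's. The core implication (i)~$\Rightarrow$~(ii) is handled identically: both you and the paper combine $\area{P} = \tfrac{1}{2}\bndp{P}$ (from Pick/\cref{thm: PIP criteria}), the triangulated area formula $\area{P} = \sum_F \tfrac{1}{2}\llength{F}\ldist{F}$, and the identity $\bndp{P} = \sum_F \llength{F}$ to force $\sum_F \llength{F}(\ldist{F}-1) = 0$, then invoke \cref{prop: ever facet of a PIP is reticular} to get $\ldist{F} \ge 1$ and hence $\ldist{F} = 1$. One small remark: you spell out (iii)~$\Rightarrow$~(ii) using $\ldist{F} \in \Zp$ from \cref{prop: ever facet of a PIP is reticular}, whereas the paper asserts that (ii)~$\Leftrightarrow$~(iii) is standard for \emph{all} rational polytopes with $0$ in the interior---your version is in fact the more careful one here, since without some integrality of $\ldist{F}$ the implication $\ldist{F}^{-1} \in \Z \Rightarrow \ldist{F} = 1$ would fail.
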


Before we turn to the proof of \cref{prop: characterization of PIPS
with i = 1}, recall that the implications %
\ref{cond: i=1} %
    ~$\lif$~%
\ref{cond: edges are lattice distance 1}%
    ~$\liff$~%
\ref{cond: PRP} %
are standard results that hold for all rational full-dimensional
polytopes $P \subset \R^{d}$ with $0 \in \intr{P}$.  That is, neither
PIP-ness nor $2$-dimensionality is required for these implications.

\begin{sloppypar}
    Thus it remains only to prove that \ref{cond: i=1}~$\limp$~\ref{cond:
    edges are lattice distance 1}.  Note that this implication fails for
    non-PIPs such as the $4$-gon
    \begin{equation*}
        \setof{x \in \R^{2} \st \pm 2x \pm 3y \le 2}
            \quad = \quad
            \vcenter{\hbox{\scalebox{0.75}{\begin{tikzpicture}
                \drawlatticegrid{-1}{1}{-1}{1}{3}
                \drawfourgon{(1,0)}{(0,2/3)}{(-1,0)}{(0,-2/3)}
            \end{tikzpicture}}}}\;.
    \end{equation*}
    Moreover, the \ref{cond: i=1}~$\limp$~\ref{cond: edges are lattice
    distance 1} implication fails even in the integral case when ${\dim(P)
    \ge 3}$, since there exist non-reflexive lattice polytopes~$P$ with
    $\intr{P} \cap \Z^{d} = \setof{0}$ when $d \ge 3$.  (Note also that
    there exist non-PIPs that satisfy all of Conditions~\ref{cond:
    i=1}--\ref{cond: PRP}, such as the $8$-gon
    \begin{equation*}
        \setof{
            x \in \R^{2}
            \st
            \text{
                $\pm x \pm 2y \le 1$ and
                $\pm 2x \pm y \le 1$
            }
        }
            \quad = \quad %
            \vcenter{\hbox{\scalebox{0.75}{\begin{tikzpicture}
                \drawlatticegrid{-1}{1}{-1}{1}{6}
                \filldraw[polygonstyle] 
                    (1/2,0) --
                    (1/3,1/3) --
                    (0,1/2) --
                    (-1/3,1/3) --
                    (-1/2,0) --
                    (-1/3,-1/3) --
                    (0,-1/2) --
                    (1/3,-1/3) --
                    cycle;
            \end{tikzpicture}}}}\;,
    \end{equation*}
    which cannot be a PIP because there are no lattice points on its
    boundary.)
\end{sloppypar}

\begin{proof}[Proof of \cref{prop: characterization of PIPS with i = 1}] %
    It remains to prove that \ref{cond: i=1}~$\limp$~\ref{cond: edges
    are lattice distance 1}.  Let $P$ be a PIP such that $\intr{P}
    \cap \Z^{2} = \setof{0}$, and fix a boundary partition $\B$ of
    $P$.  We must show that $\ldist{E} = 1$ for all $E \in \B$.
    
    On the one hand, $P$, being a PIP, satisfies Pick's formula
    $\area{P} = \intp{P} + \frac{1}{2}\bndp{P} - 1$ by \cref{thm: PIP
    criteria}, so the area of~$P$ is $\frac{1}{2}\bndp{P}$.  Thus, the
    leading coefficient of $\ehr{P}$ is $\frac{1}{2}\bndp{P}$.
    
    On the other hand, for each partially open edge $E \in \B$, let
    $T_{E}$ be the partially open triangle defined by $T_{E} \deftobe
    \bigcup_{x \in E} \blopen{0, x}$.  Since $P \minusset{0}$ is a
    disjoint union of these partially open triangles, we have that
    $\ehr{P} = 1 + \sum_{E \in \B} \ehr{T_{E}}$.  Note that the
    leading coefficient of $\ehr{T_{E}}$ is $\area{T_{E}} =
    \frac{1}{2} \ldist{E} \llength{E}$ for all $E \in \B$, where
    $\llength{E}$ is the lattice length of $E$.  This last equation
    for the area of the triangle $T_{E}$ holds even though
    $\llength{E}$ is not a Euclidean length because we may use a
    lattice automorphism (which has determinant $1$) to map the edge
    $E$ to the line $\setof{(\xi, \ldist{E}) \st \xi \in \R}$, where
    $\llength{E}$ is now the Euclidean length of the image of $E$, and
    $\ldist{E}$ is the Euclidean height of the triangle with the image
    of $E$ as its base and $0$ as its apex.
    
    Hence, by comparing the leading coefficients of $\ehr{P}$ and $1 +
    \sum_{E \in \B} \ehr{T_{E}}$, we get that $\sum_{E \in \B}
    \ldist{E} \llength{E} = \bndp{P} = \sum_{E \in \B} \llength{E}$
    where second equation follows from Ehrhart--Macdonald reciprocity.
    Now, by \cref{prop: ever facet of a PIP is reticular}, $\ldist{E}
    \in \Zp$, and so $\ldist{E} \ge 1$, for all $E \in \B$.  It
    therefore follows from the equation $\sum_{E \in \B} \ldist{E}
    \llength{E} = \sum_{E \in \B} \llength{E}$ that $\ldist{E} = 1$
    for all $E \in \B$.
\end{proof}

\section{Counting boundary points of triangular PIPs}
\label{sec: reduction to number-theoretic lemma}

In this section, we derive a formula for the number $\bndp{P}$ of
lattice points on the boundary of a rational triangular PIP that
contains exactly one lattice point in its interior.  This formula
appears in \cref{prop: b for triangular PRPs} below.  Once this
formula is in hand, it will follow for purely number-theoretic reasons
given in \cref{sec: Number-theoretic lemmas for PRPs} that $\bndp{P}
\le 9$ and $\bndp{P} \ne 7$.

We first fix some additional notation.  Given vectors $u, v \in
\R^{2}$, we write $\prp{v}$ for the image of $v$ under
counterclockwise rotation about the origin by $\pi/2$; $\norm{v}$ for
the Euclidean length of $v$; and $\det(u, v)$ for the determinant of
the $2 \times 2$ matrix with columns $u$ and $v$, in that order.  Recall
that we write $\llength{E}$ for the lattice length of a line segment
$E$ that is parallel to a rational line.

The following proposition is probably not original.  However, we have
not found a reference in the literature.

\begin{proposition}\label{prop: edge vector of a polygon}
    Let $P = \setof{a \in \R^{2} \st \text{$\inner{u_{i}, a} \le
    \ldist{i}$ for all $i \in \Zmodn$}}$ be an $n$-gon, where the
    $u_{i}$ are outer edge normals of $P$ and are indexed
    counterclockwise about~$P$.  For each $i \in \Zmodn$, let~$E_{i}$
    be the edge of $P$ with outer normal $u_{i}$, and let
    $\setof{v_{i,i+1}} \deftobe E_{i} \cap E_{i+1}$.  Finally, for
    each $i, j \in \Zmodn$, let $\dett{ij} \deftobe \det(u_{i},
    u_{j})$.  
    
    Then $\dett{i-1, i},\, \dett{i, i+1} > 0$, and the edge~vector of
    the edge $E_{i}$ is
    \begin{equation}\label{eq: edge vector of a polygon}
        v_{i,i+1} - v_{i-1,i}
            =%
            \frac{
                \ldist{i-1} \, \dett{i, i+1}  
                - \ldist{i} \, \dett{i-1,i+1}
                + \ldist{i+1} \, \dett{i-1, i}
            }{
                \dett{i-1, i} \,
                \dett{i, i+1}
            }\;
            \prp{u_{i}}
    \end{equation}
    for all $i \in \Zmodn$.
\end{proposition}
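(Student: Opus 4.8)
The plan is to solve directly for the two vertices $v_{i-1,i}$ and $v_{i,i+1}$ as the (unique) intersection points of the relevant supporting lines, to write each one in the basis $\setof{\prp{u_{i-1}}, \prp{u_i}}$, respectively $\setof{\prp{u_i}, \prp{u_{i+1}}}$, and then to collapse the difference using the fact that any three vectors in the plane are linearly dependent.

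First I would record closed forms for the vertices. By hypothesis $v_{i-1,i}$ is the unique solution of $\inner{u_{i-1}, a} = \ldist{i-1}$, $\inner{u_i, a} = \ldist{i}$; uniqueness forces $u_{i-1}, u_i$ to be linearly independent, so $\dett{i-1,i} = \det(u_{i-1}, u_i) \ne 0$, and similarly $\dett{i,i+1} \ne 0$. Writing $v_{i-1,i} = \alpha\,\prp{u_{i-1}} + \gamma\,\prp{u_i}$ and using $\inner{w,\prp{w}} = 0$ together with $\inner{u_i, \prp{u_{i-1}}} = \dett{i-1,i}$ and $\inner{u_{i-1}, \prp{u_i}} = -\dett{i-1,i}$, one reads off $\alpha = \ldist{i}/\dett{i-1,i}$ and $\gamma = -\ldist{i-1}/\dett{i-1,i}$, so that
\[
    v_{i-1,i} = \frac{\ldist{i}\,\prp{u_{i-1}} - \ldist{i-1}\,\prp{u_i}}{\dett{i-1,i}},
    \qquad
    v_{i,i+1} = \frac{\ldist{i+1}\,\prp{u_i} - \ldist{i}\,\prp{u_{i+1}}}{\dett{i,i+1}},
\]
the second formula being the same computation with $(i-1,i)$ replaced by $(i,i+1)$. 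For the sign claim $\dett{i-1,i}, \dett{i,i+1} > 0$: since $P$ is convex and the outer normals appear in counterclockwise cyclic order, $u_i$ is obtained from $u_{i-1}$ by a counterclockwise rotation through an angle $\theta \in (0,\pi)$, and $\det(u_{i-1},u_i) = \norm{u_{i-1}}\,\norm{u_i}\sin\theta > 0$.

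Next I would subtract the two formulas and clear the common denominator $\dett{i-1,i}\,\dett{i,i+1}$, so that the numerator of $v_{i,i+1} - v_{i-1,i}$ becomes
\[
    \bigparens{\ldist{i+1}\,\dett{i-1,i} + \ldist{i-1}\,\dett{i,i+1}}\,\prp{u_i}
    \;-\; \ldist{i}\,\bigparens{\dett{i-1,i}\,\prp{u_{i+1}} + \dett{i,i+1}\,\prp{u_{i-1}}}.
\]
The key step is the planar identity $\det(b,c)\,a + \det(c,a)\,b + \det(a,b)\,c = 0$, valid for all $a,b,c \in \R^2$ (it is linear in $a$, so checking $a \in \setof{e_1, e_2}$ suffices). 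Applying it with $a = \prp{u_i}$, $b = \prp{u_{i-1}}$, $c = \prp{u_{i+1}}$ and using that rotation by $\pi/2$ preserves determinants — so $\det(\prp{u_j},\prp{u_k}) = \dett{jk}$ — yields $\dett{i-1,i}\,\prp{u_{i+1}} + \dett{i,i+1}\,\prp{u_{i-1}} = \dett{i-1,i+1}\,\prp{u_i}$. Substituting this collapses the second bracket and leaves the numerator equal to $\bigparens{\ldist{i-1}\,\dett{i,i+1} - \ldist{i}\,\dett{i-1,i+1} + \ldist{i+1}\,\dett{i-1,i}}\,\prp{u_i}$, which is exactly \cref{eq: edge vector of a polygon}. (That the answer must be a scalar multiple of $\prp{u_i}$ is a useful sanity check, since $v_{i-1,i}$ and $v_{i,i+1}$ both lie on the line $\inner{u_i, a} = \ldist{i}$.)

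There is no conceptual obstacle here; the whole argument is an exercise in orientation bookkeeping, and the one genuine risk is a sign slip — in the identities $\inner{u_j,\prp{u_k}} = \pm\dett{jk}$, in the cyclic signs of the three-vector identity, or in which of $\ldist{i-1}$, $\ldist{i+1}$ pairs with which determinant — since any single such slip flips a term of \cref{eq: edge vector of a polygon}. To guard against this I would check the vertex formulas of the first step against the standard triangle $\conv\setof{(0,0),(1,0),(0,1)}$, where $v_{i,i+1} - v_{i-1,i}$ is a genuine edge vector whose value can be read off directly.
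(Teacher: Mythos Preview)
Your argument is correct and is essentially the same as the paper's (sketched) proof: both compute the vertex $v_{i,i+1}$ by solving the $2\times 2$ linear system to obtain $v_{i,i+1} = \frac{1}{\dett{i,i+1}}(\ldist{i+1}\,\prp{u_i} - \ldist{i}\,\prp{u_{i+1}})$, then simplify the difference using the planar linear-dependence relation $\dett{i,i+1}\,u_{i-1} + \dett{i-1,i}\,u_{i+1} = \dett{i-1,i+1}\,u_i$ (which the paper phrases as a second application of Cramer's rule and you phrase as the identity $\det(b,c)a + \det(c,a)b + \det(a,b)c = 0$). Your write-up is simply a fully fleshed-out version of the paper's sketch, with the same vertex formulas and the same key identity.
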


\begin{proof} 
    (\emph{Sketch}) The positivity of $\dett{i, i+1}$ for $i \in
    \Zmodn$ corresponds to the condition that $P$ is convex.  To
    derive \cref{eq: edge vector of a polygon}, use Cramer's rule to
    solve for the vertex~$v_{i,i+1}$ in the linear system
    $\inner{u_{i}, v_{i,i+1}} = \ldist{i}$, $\inner{u_{i+1},
    v_{i,i+1}} = \ldist{i+1}$.  One finds that
    \begin{equation*}
        v_{i, i+1}
            =
            \frac{1}{\dett{i, i+1}} 
            (\ldist{i+1} \prp{u_{i}} - \ldist{i} \prp{u_{i+1}})
    \end{equation*}
    for $i \in \Zmodn$.  Now take the difference $v_{i, i+1} - v_{i-1,
    i}$ and simplify the result by again using Cramer's rule to show
    that $\dett{i, i+1} u_{i-1} + \dett{i-1, i} u_{i+1} = \dett{i-1,
    i+1} u_{i}$.
\end{proof}

\begin{corollary}\label{cor: lattice length of an edge in an n-gon}
    Let $P \subset \R^{2}$ be a rational $n$-gon (or, more generally,
    an $n$-gon in which every edge has a rational outer normal).
    Adopt the notation of \cref{prop: edge vector of a polygon}, with
    the additional condition that $u_{i}$ be the \emph{primitive}
    outer normal of the edge $E_{i}$ for $i \in \Zmodn$.  Then the
    lattice length of the edge $E_{i}$ is
    \begin{equation}\label{eq: lattice length of an edge in an n-gon}
        \llength{E_{i}}
            =%
            \frac{
                \ldist{i-1} \, \dett{i, i+1}  
                - \ldist{i} \, \dett{i-1,i+1}
                + \ldist{i+1} \, \dett{i-1, i}
            }{
                \dett{i-1, i} \,
                \dett{i, i+1}
            }
    \end{equation}
    for $i \in \Zmod{n}$.
\end{corollary}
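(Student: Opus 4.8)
The plan is to read off the lattice length directly from the edge-vector formula of \cref{prop: edge vector of a polygon}. After replacing each outer normal with the \emph{primitive} outer normal of the corresponding edge, as the hypothesis of the corollary allows, \cref{prop: edge vector of a polygon} applies unchanged and shows that the edge vector $v_{i,i+1}-v_{i-1,i}$ of $E_i$ equals $\lambda_i\,\prp{u_i}$, where $\lambda_i$ denotes the scalar on the right-hand side of \eqref{eq: lattice length of an edge in an n-gon}. It then suffices to prove that $\llength{E_i}=|\lambda_i|$ and that $\lambda_i\ge 0$, and the substantive point is the first of these.

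For $\llength{E_i}=|\lambda_i|$, the key observation is that $\prp{u_i}$ is a primitive lattice vector. Indeed, counterclockwise rotation by $\pi/2$ is given by the matrix $\rinlinematrix{0}{-1}{1}{0}\in\SL_2(\Z)$, which restricts to an automorphism of $\Z^2$ and therefore carries the primitive lattice vector $u_i$ to the primitive lattice vector $\prp{u_i}$. Since a segment $E=[p,\,p+cw]$ whose direction $w$ is a primitive lattice vector has $\llength{E}=\relvol(E)=|c|$ — the relative volume being measured with respect to the lattice $\Z w$ in the line $\R w$ parallel to $\aff(E)$ — taking $w=\prp{u_i}$ and $c=\lambda_i$ gives $\llength{E_i}=|\lambda_i|$.

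It remains to fix the sign. The denominator $\dett{i-1,i}\,\dett{i,i+1}$ of $\lambda_i$ is positive by \cref{prop: edge vector of a polygon}, so $\lambda_i\ge 0$ is equivalent to nonnegativity of the numerator, which I would establish by an orientation argument: because the $u_i$ are indexed counterclockwise, a counterclockwise traversal of $\bnd P$ runs along $E_i$ from $v_{i-1,i}$ to $v_{i,i+1}$, and the direction of counterclockwise travel along an edge is its outer normal turned a quarter-turn counterclockwise, so $v_{i,i+1}-v_{i-1,i}$ is a positive multiple of $\prp{u_i}$. (Alternatively: $v_{i,i+1}$ lies strictly in the open halfspace $\{\inner{u_{i-1},a}<\ldist{i-1}\}$ because the lines of $E_{i-1}$ and $E_i$ are non-parallel and so meet only at $v_{i-1,i}\ne v_{i,i+1}$; then $\inner{u_{i-1},\,v_{i,i+1}-v_{i-1,i}}<0$, and since $\inner{u_{i-1},\prp{u_i}}=-\dett{i-1,i}<0$ this forces $\lambda_i>0$.) Hence $\llength{E_i}=\lambda_i$, which is \eqref{eq: lattice length of an edge in an n-gon}. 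I do not anticipate a genuine obstacle: the argument is essentially the primitivity of $\prp{u_i}$ together with sign bookkeeping, and the only subtlety worth flagging is that just rational outer normals, not rational vertices, are what is needed both for $\prp{u_i}$ to be a lattice vector and for $\llength{E_i}$ to be well-defined in the first place.
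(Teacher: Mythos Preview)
Your argument is correct and follows essentially the same route as the paper: both observe that $\prp{u_i}$ is a primitive lattice vector parallel to $E_i$, so the lattice length of $E_i$ is the scalar coefficient in $v_{i,i+1}-v_{i-1,i}=\lambda_i\,\prp{u_i}$. The paper phrases this as $\llength{E_i}=\norm{v_{i,i+1}-v_{i-1,i}}/\norm{\prp{u_i}}$ and then writes down the fraction without further comment, whereas you explicitly justify $\lambda_i\ge 0$ via an orientation argument; your treatment of the sign is more careful than the paper's, which simply elides it.
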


\begin{proof}
    Since $u_{i}$ is the primitive outer normal to $E_{i}$, the vector
    $\prp{u_{i}}$ is a primitive lattice vector that is parallel to
    $E_{i}$.  The lattice length of the edge $E_{i}$ is thus the
    Euclidian length of $E_{i}$ divided by the Euclidean length of
    $\prp{u_{i}}$.  Therefore, by \cref{eq: edge vector of a polygon},
    \begin{equation*}
        \llength{E_{i}}
            =
            \frac{\norm{v_{i,i+1} - v_{i-1,i}}}{\norm{\prp{u_{i}}}}
            =
            \frac{
                \ldist{i-1} \, \dett{i, i+1}  
                - \ldist{i} \, \dett{i-1,i+1}
                + \ldist{i+1} \, \dett{i-1, i}
            }{
                \dett{i-1, i} \,
                \dett{i, i+1}
            }. \qedhere
    \end{equation*}
\end{proof}

Note that the determinants $\dett{ij} = \det(u_{i}, u_{j})$ appearing in
\cref{eq: lattice length of an edge in an n-gon} are all integers.
Moreover, the determinants of the form $\dett{i, i+1}$ are
\emph{positive} integers because $P$ is convex and the edges are
indexed counterclockwise about $P$.

Unfortunately, no such guarantee of positivity is possible for the
determinants of the form $\dett{i-1,i+1}$ in general.  However, in the
case in which $P$ is a rational \emph{triangle}, \cref{eq: lattice
length of an edge in an n-gon} simplifies nicely.  For then, computing
modulo $3$ in the subscripts, we have that ${-\dett{i-1, i+1} =
\dett{i+1, i+2} \in \Zp}$.

This control over the signs of all of the determinants $\dett{ij}$ in
the ${n = 3}$ case is what permits us to apply the number-theoretic
bounds that we find below to pseudointegral triangles.  For $n$-gons
with $n \ge 4$, we lose this control.  Therefore, our methods do not
immediately resolve whether every $n$-gonal PRP $P$ with $n \ge 4$
satisfies $\intp{P} \le 9$.

\begin{corollary}\label{cor: lattice length of an edge in a triangle}
    Let $T$ be a rational triangle (or, more generally, an triangle in
    which every edge has a rational outer normal) and let $E$ be an
    edge of $T$.  Then the lattice length of $E$ is
    \begin{align}\label{eq: lattice length of an edge in a triangle}
        \llength{E}
            &=
            \frac{
                \alpha x + \beta y + \gamma z
            }{xyz} \;
            x,
    \end{align}
    where $T = \setof{a \in \R^{2} \st \inner{u, a} \le \alpha, \,
    \inner{v, a} \le \beta, \,\inner{w, a} \le \gamma}$; $u$ is the
    primitive outer normal of $E$; $v$ and $w$ are the respective
    primitive outer normals of the remaining edges of $T$ in
    counterclockwise order from $E$; and $x \deftobe \det(v, w)$, $y
    \deftobe \det(w, u)$, and $z \deftobe \det(u, v)$.
\end{corollary}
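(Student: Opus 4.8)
The plan is to deduce \cref{eq: lattice length of an edge in a triangle} directly from \cref{cor: lattice length of an edge in an n-gon} by specializing to $n = 3$; essentially all of the work is a dictionary between the two sets of notation, together with some bookkeeping of signs.

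First I would fix the correspondence with the notation of \cref{prop: edge vector of a polygon}. Write $E = E_i$, so $u = u_i$. Since the normals $u_j$ are indexed counterclockwise about $T$ and $v, w$ are the primitive outer normals of the remaining edges in counterclockwise order from $E$, we have $v = u_{i+1}$ and $w = u_{i+2} = u_{i-1}$, with indices modulo $3$; matching up the defining inequalities $\inner{u_i, a} \le \ldist{i}$ with $\inner{u, a} \le \alpha$ and so on, the roles of $\alpha, \beta, \gamma$ are played by $\ldist{i}, \ldist{i+1}, \ldist{i-1}$ respectively. Correspondingly $y = \det(w, u) = \dett{i-1, i}$, $z = \det(u, v) = \dett{i, i+1}$, $x = \det(v, w) = \dett{i+1, i+2}$, and $\dett{i-1, i+1} = \det(u_{i-1}, u_{i+1}) = -\det(u_{i+1}, u_{i-1}) = -x$ (equivalently $-\dett{i-1, i+1} = \dett{i+1, i+2}$ via $i - 1 \equiv i + 2 \pmod{3}$, as noted just before the statement). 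By \cref{prop: edge vector of a polygon}, each consecutive determinant $\dett{j, j+1}$ is a positive integer, since $T$ is convex with counterclockwise-indexed edges; taking $j \in \{i-1, i, i+1\}$ shows $y, z, x > 0$. Hence $x, y, z \in \Zp$, and the denominator $xyz$ in \cref{eq: lattice length of an edge in a triangle} is a nonzero integer.

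Finally I would substitute this dictionary into \cref{eq: lattice length of an edge in an n-gon}: its numerator $\ldist{i-1}\,\dett{i, i+1} - \ldist{i}\,\dett{i-1, i+1} + \ldist{i+1}\,\dett{i-1, i}$ becomes $\gamma z - \alpha(-x) + \beta y = \alpha x + \beta y + \gamma z$, and its denominator $\dett{i-1, i}\,\dett{i, i+1}$ becomes $yz$, so that
\begin{equation*}
    \llength{E}
        =
        \frac{\alpha x + \beta y + \gamma z}{yz}
        =
        \frac{\alpha x + \beta y + \gamma z}{xyz}\, x,
\end{equation*}
which is \cref{eq: lattice length of an edge in a triangle}. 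I do not expect a genuine obstacle here: the mathematical content is entirely contained in \cref{cor: lattice length of an edge in an n-gon}, and the only place an error could creep in is the counterclockwise identification $v = u_{i+1}$, $w = u_{i-1}$ and the attendant sign $\dett{i-1, i+1} = -x$, which must be kept consistent with the orientation conventions — the rotation $v \mapsto \prp{v}$ and the ordering of the columns of $\det$ — built into \cref{prop: edge vector of a polygon}.
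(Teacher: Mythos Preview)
Your proposal is correct and follows exactly the approach the paper intends: the corollary is stated without an explicit proof environment, but the paragraph immediately preceding it notes that in the triangle case one has $-\dett{i-1,i+1} = \dett{i+1,i+2} \in \Zp$ when computing subscripts modulo~$3$, which is precisely the key sign identity you use. Your detailed dictionary between the two notations and the final substitution into \cref{eq: lattice length of an edge in an n-gon} simply make explicit what the paper leaves to the reader.
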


(The construction of $(x,y,z) \in \Zp^{3}$ from the rational triangle
$T$ and a fixed edge $E$ of $T$ very nearly defines an invariant of
rational triangles modulo the automorphism group of the lattice.  We
do not yet have an invariant because of the dependence on the choice
of $E$.  We complete the construction of an invariant $T \mapsto
(x,y,z)$ in \cref{defn: an invariant for rational triangles} below.)

Thus far, the formulas in \cref{prop: edge vector of a polygon} and
its corollaries hold for all rational triangles (at least).  We now
introduce the pseudointegrality condition.  The following proposition
provides a formula for $\bndp{T}$ in the case in which the triangle
$T$ is pseudointegral.

\begin{proposition}\label{prop: i A and b for triangular PIPs} 
    Let $T \subset \R^{2}$ be a rational triangular PIP,
    and adopt the notation of \cref{cor: lattice length of an edge in
    a triangle}.  Then the number of lattice points on the boundary of
    $T$ is
    \begin{equation*}
        \bndp{T}
            =%
            \frac{
                \parens{
                    \alpha x 
                    + \beta y
                    + \gamma z
                }
                \parens{
                    x
                    + y
                    + z
                }
            }{xyz}.
    \end{equation*}
\end{proposition}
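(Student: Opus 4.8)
The plan is to express $\bndp{T}$ as the sum $\llength{E_{1}} + \llength{E_{2}} + \llength{E_{3}}$ of the lattice lengths of the three edges of $T$, and then to apply \cref{cor: lattice length of an edge in a triangle} once to each edge. The three resulting fractions will share the common factor $\frac{\alpha x + \beta y + \gamma z}{xyz}$, whose numerator is invariant under the relevant cyclic permutation, and will carry the three distinct factors $x$, $y$, $z$ in their numerators; so their sum will be $\frac{(\alpha x + \beta y + \gamma z)(x+y+z)}{xyz}$.

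First I would prove that $\bndp{T} = \llength{E_{1}} + \llength{E_{2}} + \llength{E_{3}}$, where $E_{1}, E_{2}, E_{3}$ are the edges of $T$. Partitioning $\bnd{T}$ into three half-open edges $F_{1}, F_{2}, F_{3}$ with $F_{i} \subset E_{i}$ gives $\ehr{\bnd{T}} = \sum_{i=1}^{3} \ehr{F_{i}}$. By \cref{prop: ever facet of a PIP is reticular} each edge $E_{i}$ is reticular, so $tF_{i}$ is reticular for every $t \in \Zp$, and hence, by the observation preceding that proposition, $\ehr{F_{i}}$ is a quasipolynomial of degree $1$ whose leading coefficient is the constant $\relvol(F_{i}) = \llength{E_{i}}$. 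On the other hand, since $T$ is a PIP, \cref{thm: PIP criteria} gives $\bndp{tT} = t\,\bndp{T}$ for all $t \in \Zp$, i.e.\ $\ehr{\bnd{T}}(t) = \bndp{T}\,t$. As the two quasipolynomials $\ehr{\bnd{T}}$ and $\sum_{i} \ehr{F_{i}}$ agree on $\Zp$, their leading coefficients agree, which yields $\bndp{T} = \sum_{i=1}^{3} \llength{E_{i}}$.

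Next I would apply \cref{cor: lattice length of an edge in a triangle} to each edge. Take $E_{1}$ to be the edge $E$ fixed in that corollary, with primitive outer normal $u_{1} \deftobe u$, and let $E_{2}, E_{3}$ be the remaining edges, with primitive outer normals $u_{2} \deftobe v$ and $u_{3} \deftobe w$, so that $u_{1}, u_{2}, u_{3}$ occur counterclockwise. Writing $T = \setof{a \in \R^{2} \st \inner{u_{j}, a} \le \delta_{j} \text{ for } j = 1,2,3}$, we have $(\delta_{1},\delta_{2},\delta_{3}) = (\alpha,\beta,\gamma)$ and $(x,y,z) = (\det(u_{2},u_{3}),\, \det(u_{3},u_{1}),\, \det(u_{1},u_{2}))$, all positive integers by \cref{prop: edge vector of a polygon}. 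The corollary applied to $E_{1}$ gives $\llength{E_{1}} = \frac{\alpha x + \beta y + \gamma z}{xyz}\,x$. Applied to $E_{2}$, the corollary's recipe uses the remaining normals in counterclockwise order from $E_{2}$, namely $u_{3}$ and $u_{1}$; this amounts to replacing the ordered triple $(u_{1},u_{2},u_{3})$ by its cyclic rotation $(u_{2},u_{3},u_{1})$, hence $(x,y,z)$ by $(y,z,x)$ and $(\alpha,\beta,\gamma)$ by $(\beta,\gamma,\alpha)$. Since $\alpha x + \beta y + \gamma z$ is unchanged by this simultaneous rotation, the corollary gives $\llength{E_{2}} = \frac{\alpha x + \beta y + \gamma z}{xyz}\,y$, and one more rotation gives $\llength{E_{3}} = \frac{\alpha x + \beta y + \gamma z}{xyz}\,z$. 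Adding the three lattice lengths and invoking the previous paragraph gives $\bndp{T} = \frac{\alpha x + \beta y + \gamma z}{xyz}\,(x+y+z)$, as claimed.

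The substantive step is the first one: it is the only place the pseudointegrality of $T$ enters (through \cref{thm: PIP criteria} and \cref{prop: ever facet of a PIP is reticular}), and the identity $\bndp{T} = \sum_{i} \llength{E_{i}}$ genuinely requires it, since for a general rational triangle a non-reticular edge $E_{i}$ contributes $0$, rather than $\llength{E_{i}}$, to the leading coefficient of $\ehr{\bnd{T}}$. The only delicate point in the second step is keeping the counterclockwise order of the primitive normals straight, so that all three applications of \cref{cor: lattice length of an edge in a triangle} really produce the same numerator $\alpha x + \beta y + \gamma z$; once that is checked, what remains is the cyclic-symmetry bookkeeping just described.
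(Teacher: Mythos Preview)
Your proof is correct and follows the same two-step approach as the paper: first use pseudointegrality to identify $\bndp{T}$ with the sum of the lattice lengths of the edges, then apply \cref{cor: lattice length of an edge in a triangle} to each edge and sum. The paper's proof is just two sentences asserting exactly this; you have supplied the details the paper leaves implicit, in particular the justification (via \cref{thm: PIP criteria} and \cref{prop: ever facet of a PIP is reticular}) that $\bndp{T} = \sum_{i} \llength{E_{i}}$ and the cyclic-symmetry bookkeeping in the second step.
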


\begin{proof}
    Since $T$ is a PIP, $\bndp{T}$ is the sum of the lattice lengths
    of the edges of $T$.  The result thus follows from \cref{cor:
    lattice length of an edge in a triangle}.
\end{proof}

In the case in which $\intp{T} = 1$ and $T$ is pseudointegral,
the previous proposition yields the following as a corollary.

\begin{proposition}\label{prop: b for triangular PRPs} 
    Let $T$ be a triangular PIP with $\intp{T} = 1$.  Let $x, y, z \in
    \Zp$ be as defined in \cref{cor: lattice length of an edge in a
    triangle}.  Then the number of lattice points on the boundary of
    $T$ is
    \begin{equation}\label{eq: b for triangular PRPs}
        \bndp{T}
            =
            \frac{
                \parens{
                    x
                    + y
                    + z
                }^{2}
            }{xyz}.
    \end{equation}
\end{proposition}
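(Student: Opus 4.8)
The plan is to deduce this immediately from \cref{prop: i A and b for triangular PIPs} once we know that the hypothesis $\intp{T} = 1$ forces $\alpha = \beta = \gamma = 1$ in the inequality description $T = \setof{a \in \R^{2} \st \inner{u,a} \le \alpha,\ \inner{v,a} \le \beta,\ \inner{w,a} \le \gamma}$ of \cref{cor: lattice length of an edge in a triangle}. Indeed, \cref{prop: i A and b for triangular PIPs} already gives $\bndp{T} = (\alpha x + \beta y + \gamma z)(x + y + z)/(xyz)$, so substituting $\alpha = \beta = \gamma = 1$ produces exactly \cref{eq: b for triangular PRPs}.

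First I would reduce to the case in which the unique interior lattice point of $T$ is the origin. Let $p$ be that point and pass from $T$ to the lattice translate $T - p$. Since $p \in \Z^{2}$, this translation preserves the Ehrhart function — so $T - p$ is again a triangular PIP — and preserves $\bndp{T}$; moreover it leaves the primitive outer normals $u, v, w$ of the three edges unchanged, hence leaves $x = \det(v,w)$, $y = \det(w,u)$, and $z = \det(u,v)$ unchanged. So it suffices to prove \cref{eq: b for triangular PRPs} under the additional assumption that $0 \in \intr{T}$ with $\intr{T} \cap \Z^{2} = \setof{0}$.

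Under that assumption, the scalars $\alpha, \beta, \gamma$ are positive (because $\inner{u,0} = 0 < \alpha$, and similarly for $\beta, \gamma$, since $0$ lies in the interior), and since $u, v, w$ are \emph{primitive} outer normals, $\alpha, \beta, \gamma$ are precisely the lattice distances of the three edges from the origin. Now \cref{prop: characterization of PIPS with i = 1} applies: $T$ is a $2$-dimensional PIP with $0$ in its interior and $\setof{0}$ as its only interior lattice point, so its implication \ref{cond: i=1}~$\limp$~\ref{cond: edges are lattice distance 1} forces every edge to be at lattice distance $1$ from the origin, i.e., $\alpha = \beta = \gamma = 1$. Plugging this into the formula of \cref{prop: i A and b for triangular PIPs} gives $\bndp{T} = (x+y+z)^{2}/(xyz)$, as desired.

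I do not anticipate any genuine obstacle: the real content — the formula of \cref{prop: i A and b for triangular PIPs}, and the fact (\cref{prop: characterization of PIPS with i = 1}) that a polygonal PIP with a single interior lattice point has all of its edges at lattice distance $1$ — is already in place, and the only point requiring care is to check that moving the interior lattice point to the origin disturbs neither $\bndp{T}$ nor the integers $x, y, z$.
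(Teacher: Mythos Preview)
Your proposal is correct and follows essentially the same approach as the paper: translate so the interior lattice point is the origin, invoke \cref{prop: characterization of PIPS with i = 1} to get $\alpha = \beta = \gamma = 1$, and substitute into \cref{prop: i A and b for triangular PIPs}. You supply a bit more detail than the paper (e.g., explicitly noting that the lattice translation preserves $x, y, z$ since the primitive outer normals are unchanged), but the argument is the same.
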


\begin{proof}
    Since we may apply a lattice translation to $T$ without changing
    the number of lattice points on the boundary or in the interior of
    $T$, we may suppose without loss of generality that $\intr{T} \cap
    \Z^{2} = \setof{0}$.  Thus, $\alpha = \beta = \gamma = 1$ in
    \cref{prop: characterization of PIPS with i = 1}.  The result now
    follows from \cref{prop: i A and b for triangular PIPs}.
\end{proof}

Again, the values $x, y, z$ in \cref{eq: b for triangular PRPs} are
positive integers because $T$ is rational and convex (see the
paragraph following the proof of \cref{cor: lattice length of an edge
in an n-gon}).  Of~course, the left-hand side $\bndp{T}$ is also an
integer.  In the next section, we prove that, for \emph{any} three
positive integers $x, y, z$, if $(x+y+z)^{2}/(xyz)$ is also an
integer, then that integer is at most $9$ and not equal to $7$.  This
will prove \cref{thm: first main result}.

\section{Number-theoretic lemmas} %
\label{sec: Number-theoretic lemmas for PRPs}

In this section, we prove \cref{lem: number-theory lemma for
triangular PRPs}, a statement of elementary number theory from which
our first main result (\cref{thm: first main result}) follows.  We
restate \cref{lem: number-theory lemma for triangular PRPs} here for
the convenience of the reader.

\begin{lemma}
\label{lem: number-theory lemma for triangular PRPs - restated}%
    Let $x, y, z \in \Zp$ and let $b \deftobe
    \frac{(x+y+z)^{2}}{xyz}$.  If $b \in \Z$, then $b \le 9$ and $b
    \ne 7$.
\end{lemma}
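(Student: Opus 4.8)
The plan is to treat the equation $(x+y+z)^2 = b\,xyz$ as a Diophantine equation and apply Vieta jumping (the descent technique used for the classical Markov equation and for problems like $\frac{a^2+b^2}{ab+1}=k$). First I would dispose of symmetry: since the expression is symmetric in $x,y,z$, we may assume $x \le y \le z$. Among all positive integer triples $(x,y,z)$ with $x \le y \le z$ and $\frac{(x+y+z)^2}{xyz} = b$ for a fixed admissible integer $b$, choose one minimizing $z$ (equivalently minimizing $x+y+z$). Now fix $x$ and $y$ and regard $z$ as a root of the quadratic $f(Z) \deftobe Z^2 + (2(x+y) - bxy)Z + (x+y)^2 = 0$, which has integer coefficients. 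Its other root is $z' = \frac{(x+y)^2}{z}$ by the product of roots, hence $z'$ is a positive rational; and $z' = bxy - 2(x+y) - z$ is an integer. So $(x,y,z')$ is another positive integer solution with the same value of $b$.

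The descent step is to show that minimality of $z$ forces $z \le$ something small. From $z' = (x+y)^2/z \le z$ we get, using $x \le y \le z$, the bound $z^2 \ge (x+y)^2$, which is automatic and unhelpful directly; the useful direction is $z \le z'$ cannot hold unless we bound things, so instead I would argue: by minimality $z \le z'$ is impossible strictly when $z' < z$, so $z' \ge z$, giving $(x+y)^2 \ge z^2$, i.e.\ $z \le x+y \le 2y$. Plug $z \le x+y$ back into $b = \frac{(x+y+z)^2}{xyz} \ge \frac{(x+y+z)^2}{xy(x+y)}$; combined with $z \ge y \ge x \ge 1$ this pins $x,y$ (and then $z$) into a finite list. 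Concretely, $b \ge \frac{(2z)^2}{xyz}= \frac{4z}{xy}\ge \frac{4y}{xy} = \frac 4x$, which is weak, so I would instead use $b\,x = \frac{(x+y+z)^2}{yz} \ge \frac{(y+z)^2}{yz} \ge 4$ together with the finer estimate that when $x \ge 4$ the quantity $\frac{(x+y+z)^2}{xyz}$ with $z\le x+y$ is forced below $1$ or non-integral — this is the routine but slightly fiddly casework. I expect \textbf{this bounding-the-base-case step to be the main obstacle}: extracting from $z \le x+y$ a genuinely finite search rather than an infinite strip requires combining it with an upper bound on $x$ (and on $y$), which comes from observing that $b = \frac{(x+y+z)^2}{xyz}$ is decreasing in $z$ on the relevant range, so the minimal solution has $z$ as small as the constraint $z \ge y$ allows, effectively $z \in \{y, \dots, x+y\}$, leaving only finitely many $(x,y)$ with $x$ small.

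Once the base cases are enumerated, I would simply list the minimal triples: $(1,1,1)$ gives $b=9$; $(1,1,2)$ gives $b=8$; $(1,2,3)$ gives $b=6$; $(1,1,4)$ gives $b=4$; $(2,2,4)$ gives $b=2$ (a rescaling of $(1,1,2)$, to be expected since the equation is homogeneous of degree $0$); $(1,4,5)$ gives $b=5$; $(3,3,3)$ gives $b=3$; $(1,2,6)$ and $(1,3,4)$ give $b=3$; and so on. Checking that every minimal triple yields $b \in \{1,2,3,4,5,6,8,9\}$ and, crucially, that no triple yields $b=7$, completes the proof: any solution descends to a minimal one, and the value of $b$ is invariant under the descent. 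The case $b=7$ is eliminated because the quadratic $Z^2 + (2(x+y) - 7xy)Z + (x+y)^2$ has no positive integer root for any of the finitely many admissible $(x,y)$ with $x \le y$ and $2(x+y) \ge 7xy$ violated appropriately — equivalently, $7 \notin \setof{9,8,6,5,4,3,2,1}$ and the finite check shows no new value $7$ arises. I would present the descent lemma first, then the finite verification as a short table, and finally remark that $b \le 9$ and $b \ne 7$ follows immediately.
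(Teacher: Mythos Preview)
Your overall strategy---Vieta jumping to a minimal solution with $x \le y \le z \le x+y$, followed by a finite check---is exactly the paper's approach. The descent argument is fine: once $(x,y,z)$ minimizes the largest coordinate, the companion root $z' = (x+y)^2/z$ satisfies $z' \ge z$, whence $z \le x+y$.

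However, you have not actually carried out the bounding step you flag as ``the main obstacle,'' and your attempted finite verification contains several arithmetic errors. For the bound, the paper uses the clean factorization
\[
b = \frac{x+y+z}{xy}\cdot\frac{x+y+z}{z}
  \le \frac{2(x+y)}{xy}\cdot\frac{x+y+z}{z}
  = 2\Bigl(\tfrac{1}{x}+\tfrac{1}{y}\Bigr)\Bigl(\tfrac{x}{z}+\tfrac{y}{z}+1\Bigr),
\]
from which $x \ge 2$ gives $b \le 2\cdot 1 \cdot 3 = 6$, and $x=1,\ y \ge 3$ gives $b \le 2\cdot\tfrac{4}{3}\cdot\tfrac{7}{3} = \tfrac{56}{9} < 7$, so $b \le 6$. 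Only $(x,y)\in\{(1,1),(1,2)\}$ remain, and then $z \le x+y$ leaves $z\in\{1,2\}$ or $z\in\{2,3\}$ respectively, yielding $b\in\{9,8\}$ or $b=6$ (the case $z=2$ with $(x,y)=(1,2)$ gives $b=25/4\notin\Z$). This is the missing ingredient in your write-up.

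Separately, your list of ``minimal triples'' needs correction: $(1,1,4)$ gives $b=36/4=9$, not $4$; $(2,2,4)$ gives $b=64/16=4$, not $2$; and $(1,2,6)$ and $(1,3,4)$ give $81/12$ and $64/12$, neither an integer. Also $(1,1,4)$ is not Vieta-reduced since $4 > 1+1$. Once you insert the factorization bound above, the genuine finite check is tiny (four cases) and the errors disappear.
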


(See \cref{thm: number-theory lemma for general n} below for a
generalization of \cref{lem: number-theory lemma for triangular PRPs -
restated} to $n \ge 2$ integers $x_{1}, \dotsc, x_{n}$.  However, we
relegate this result to an appendix because we do not know of an
application to PIPs.)

Before proving \cref{lem: number-theory lemma for triangular PRPs -
restated}, we give an interesting example involving the Fibonacci
sequence.  In the next section, we will show how this example gives
rise to the infinite family of PIPs in \cref{exm: Fibonacci PRPs} from
the introduction.

\begin{example}\label{exm: Fibonacci solution}
    Let $F_{j}$ denote the $j$th Fibonacci number, where $F_{1} =
    F_{2} = 1$.  Then $(x,y,z) \deftobe (1, F_{2j-1}^{2},
    F_{2j+1}^{2})$ satisfies $\frac{(x+y+z)^2}{xyz} = 9$ for all $j
    \in \Zp$.  One may verify this equation directly by using a
    special case of Cassini's identity for the Fibonacci sequence: For
    all $j \in \Zp$,
    \begin{equation*}
        F_{2j-1} F_{2j+1} - F_{2j}^{2} = 1.
    \end{equation*}
    We obtain $1+(F_{2j+1}-F_{2j-1})^{2} = F_{2j-1} F_{2j+1}$, and
    expanding yields $ 1 + F_{2j-1}^{2} + F_{2j+1}^{2} = 3 F_{2j-1}
    F_{2j+1}$, thus $(1 + F_{2j-1}^{2} + F_{2j+1}^{2})^{2} = 9
    F_{2j-1}^{2} F_{2j+1}^{2}$, as desired.
\end{example}

Note that \cref{lem: number-theory lemma for triangular PRPs - restated} is
fundamentally a number-theoretic result.  No such upper bound holds if
$x, y, z$ are merely assumed to be positive integers without the
divisibility condition, since $\lim_{z \to \infty} \frac{(x + y +
z)^{2}}{x y z} = \infty$ when $x$ and $y$ are fixed.
Furthermore, even with this divisibility condition, the positivity of
the integers $x, y, z$ cannot be omitted since, for example, $\frac{(1
- 1 + z)^{2}}{1 (-1) (-z)} = z$ and $\frac{((-1) + (1-z) +
z^{2})^{2}}{(-1)(1-z)z^{2}} = z-1$ for all $z \in \Zp$.

Our proof of \cref{lem: number-theory lemma for triangular PRPs -
restated} uses a remarkable technique in number theory that was
evidently introduced by Hurwitz c.~1907 \cite{Hur1963} and which is
now often called ``\mbox{Vieta jumping}''.  With this technique, we
can reduce a given solution ${(x, y, z) \in \Zp}$ to the equation
$\frac{(x+y+z)^{2}}{xyz} = b$ (where $b \in \Z$ is fixed) to a
``Vieta-reduced'' \mbox{solution}, which in our present situation we
define as follows.

\begin{definition}
    Fix $b \in \Zp$.  A \defing{solution} is an element of the set
    \begin{equation*}
        S_{b} 
            \deftobe %
            \setof{(x, y, z) \in \Zp^{3} \st (x + y + z)^{2} = b x y  z}.
    \end{equation*}
    A solution $(x, y, z) \in S_{b}$ is \defing{Vieta reduced} if $x
    \le y \le z \le x + y$.
\end{definition}

(\Cref{thm:reduced} below lists all Vieta-reduced solutions.)

\begin{lemma}\label{lem: a solution implies a Vieta-reduced solution}
    Fix $b \in \Zp$.  If $S_{b}$ is nonempty, then $S_{b}$ contains a
    Vieta-reduced solution.
\end{lemma}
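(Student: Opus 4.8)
The plan is a Fermat-style infinite descent built around the Vieta-jumping move mentioned just before the statement. First I would pick, among all members of the nonempty set $S_{b}$, a solution $(x,y,z)$ for which the sum $x+y+z$ is as small as possible; such a minimizer exists because $S_{b}$ is a nonempty subset of $\Zp^{3}$, so the corresponding coordinate sums form a nonempty set of positive integers and hence have a least element. Permuting the three coordinates does not affect membership in $S_{b}$, so I may further assume $x \le y \le z$. It then remains only to prove that $z \le x+y$, since this inequality together with $x \le y \le z$ says exactly that $(x,y,z)$ is Vieta reduced.

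To prove $z \le x+y$, I would view the defining equation $(x+y+z)^{2} = bxyz$ as a quadratic in the largest coordinate: holding $x$ and $y$ fixed, $z$ is a root of
\[
    f(t) \deftobe t^{2} + \bigl(2(x+y) - bxy\bigr)\, t + (x+y)^{2}.
\]
Let $z'$ be the second root. By Vieta's formulas, $z + z' = bxy - 2(x+y)$ and $z z' = (x+y)^{2}$. The first identity forces $z' \in \Z$, and since $x,y \ge 1$ gives $(x+y)^{2} > 0$, the second forces $z' > 0$; hence $z' \in \Zp$. Because $f(z') = 0$ rearranges to $(x+y+z')^{2} = bxyz'$, the triple $(x,y,z')$ again lies in $S_{b}$.

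Now suppose toward a contradiction that $z > x+y$. Then, using $z z' = (x+y)^{2}$,
\[
    z' = \frac{(x+y)^{2}}{z} < \frac{(x+y)^{2}}{x+y} = x+y < z,
\]
so $x + y + z' < x + y + z$, contradicting the minimality of the sum $x+y+z$ over $S_{b}$. Therefore $z \le x+y$, and the chosen minimal solution is Vieta reduced, which proves the lemma.

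The descent itself is routine; the one step that needs a moment's care is checking that the jumped coordinate $z'$ is still a positive integer, and that is exactly what the two Vieta relations supply — integrality from the sum of the roots, positivity from their product. I note that one could equally well run the descent by minimizing the product $xyz$ instead of the sum, since on $S_{b}$ we have $(x+y+z)^{2} = bxyz$, so a solution minimizes one of these quantities if and only if it minimizes the other.
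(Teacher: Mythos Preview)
Your proof is correct and takes essentially the same Vieta-jumping descent as the paper's: you package the descent as ``pick a minimizer of $x+y+z$ and derive a contradiction,'' whereas the paper phrases it as an explicit iteration in which the largest coordinate strictly decreases after each jump and reorder. The underlying mechanism---use the two Vieta relations to show the jumped coordinate $z'$ is a positive integer with $z' < x+y < z$ whenever $z > x+y$---is identical in both arguments.
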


\begin{proof}
    Let $(x, y, z) \in S_{b}$.  Without loss of generality, assume
    that $x \le y \le z$.  If $z \le x + y$, then we are done, so
    suppose that $z > x + y$.  Since $(x, y, z)$ is a solution, we
    have that $z$ is a root of the polynomial $f(t) \in \Z[t]$ defined
    by
    \begin{equation*}
        f(t) 
            \deftobe  
            t^{2}
            - \bigparens{bxy - 2(x+y)} t
            + (x + y)^{2}.
    \end{equation*} 
    Letting $z'$ be the other root of $f(t)$, we get from $z + z' =
    bxy - 2(x+y)$ that $z' \in \Z$ and from $z z' = (x + y)^{2} > 0$
    that $z' \ge 1$.  Thus we have a solution $(x, y, z') \in S_{b}$
    with $z' < x + y < z$.  If moreover $y \le z'$, then we are done.
    Otherwise, $z' < y$, so we reorder the solution $(x, y, z')$ in
    increasing order, getting a solution $(x_{1}, y_{1}, z_{1}) \in
    S_{b}$ with $1 \le x_{1} \le y_{1} \le z_{1}$ and $z_{1} = y < z$.
    
    If $z_{1} \le x_{1} + y_{1}$, then the solution $(x_{1}, y_{1},
    z_{1})$ is Vieta-reduced, and so we are done.  Otherwise, we
    iteratively apply the reduction procedure in the previous
    paragraph to our current solution.  At the conclusion of each
    iteration, we have a solution in~$S_{b}$ in which the
    ``$z$-value'' is a positive integer that is strictly less than the
    $z$-value of the solution with which the iteration began.
    Therefore, we eventually arrive at a Vieta-reduced solution
    in~$S_{b}$.
\end{proof}

We are now ready to prove \cref{lem: number-theory lemma for
triangular PRPs - restated}.

\begin{proof}[Proof of \cref{lem: number-theory lemma for triangular PRPs - restated}]
    By \cref{lem: a solution implies a Vieta-reduced solution}, we may
    suppose without loss of generality that our solution $(x, y, z)
    \in S_{b}$ is Vieta reduced, so that $1 \le x \le y \le z \le x +
    y$.  Hence,
    \begin{align*}
        b
            &=
            \frac{(x + y + z)^{2}}{xyz}
            =
            \frac{x + y + z}{xy} \cdot \frac{x + y + z}{z} \\ 
            &\le%
            \frac{2(x + y)}{xy} \cdot \frac{x + y + z}{z}
            =%
            2\parens{
                \frac{1}{y} + \frac{1}{x}
            }
            \parens{\frac{x}{z} + \frac{y}{z} + 1}.
    \end{align*}
    If $x \ge 2$, then $\frac{1}{y} \le \frac{1}{x} \le \frac{1}{2}$
    and $\frac{x}{z}, \frac{y}{z} \le 1$, so $b \le 6$.  Furthermore,
    if $x = 1$ and $y \ge 3$, then $\frac{x}{z} \le \frac{1}{y} \le
    \frac{1}{3}$ and $\frac{y}{z} \le 1$, so $b \le \frac{56}{9} < 7$,
    which, since $b \in \Z$, implies again that $b \le 6$.
    
    It remains to consider the cases in which $(x, y) \in
    \setof{(1,1), (1,2)}$.  On the one hand, if $(x, y) = (1, 1)$,
    then we get from $z \le x + y$ that $z \in \setof{1,2}$.  If $z =
    1$, then $b = 9$, while if $z = 2$, then $b = 8$.  On the other
    hand, if $(x,y) = (1,2)$, then we get from $y \le z \le x + y$
    that $z \in \setof{2,3}$.  Indeed, since $b \in \Z$, we have that
    $z = 3$ and hence $b = 6$.
    
    Thus we have shown that $1 \le b \le 9$ and $b \ne 7$, as claimed.
\end{proof}

Our first main result (\cref{thm: first main result}) now follows
immediately.

\begin{proof}[Proof of \cref{thm: first main result}]
    Let $T$ be a rational triangular PIP. By \cref{prop: b for
    triangular PRPs}, there exist $x, y, z \in \Zp$ such that
    $\bndp{T} = (x + y + z)^{2}/(xyz)$.  Therefore, by \cref{lem:
    number-theory lemma for triangular PRPs - restated}, $1 \le
    \bndp{T} \le 9$ and $\bndp{T} \ne 7$.
\end{proof}

We conclude this section by finding all Vieta-reduced solutions to the
equation $b = (x + y + z)^{2}/(xyz)$.  We will use this result in the
next section to construct infinitely many inequivalent triangular PIPs
$T$ such that $\intp{T} = 1$ and $\bndp{T} = b$.

\begin{theorem}\label{thm:reduced}
    Fix $b \in \Zp$, and let $(x, y, z)$ be a Vieta-reduced solution
    in $S_{b}$.  Then $(b,x,y,z)$ is one of the following thirteen
    $4$-tuples:
    \begin{center}
        \begin{tabular}{CCCCC||CCCCCC||CCCCC}
            b & x &  y &  z &&& b & x & y & z &&& b & x & y & z \\
            \hline
            1 & 5 & 20 & 25 &&& 2 & 3 & 6 & 9 &&& 4 & 2 & 2 & 4 \\
            1 & 6 & 12 & 18 &&& 2 & 4 & 4 & 8 &&& 5 & 1 & 4 & 5 \\
            1 & 8 &  8 & 16 &&& 3 & 2 & 4 & 6 &&& 6 & 1 & 2 & 3 \\
            1 & 9 &  9 &  9 &&& 3 & 3 & 3 & 3 &&& 8 & 1 & 1 & 2 \\
              &   &    &    &&&   &   &   &   &&& 9 & 1 & 1 & 1
        \end{tabular}
    \end{center}
\end{theorem}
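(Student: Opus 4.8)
The plan is to establish two a priori bounds, $5 \le bx \le 9$, and then to run a short explicit check over the finitely many surviving pairs $(b,x)$.

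The lower bound is immediate: since $x \ge 1$ we have $(x+y+z)^2 > (y+z)^2 \ge 4yz$ by the AM--GM inequality, so $(x+y+z)^2 = bxyz > 4yz$, which forces $bx \ge 5$. For the upper bound, I would regard $z$ as a root of the monic quadratic $g(t) \deftobe t^2 - (bxy - 2x - 2y)t + (x+y)^2$ — one checks that $g(z) = (x+y+z)^2 - bxyz = 0$ — whose two roots multiply to $(x+y)^2$. The Vieta-reduced hypothesis $z \le x + y$ forces $z$ to be the \emph{smaller} root of $g$, since the other root equals $(x+y)^2/z \ge x+y \ge z$. A one-line expansion gives the identity $g(y) = (x+2y)^2 - bxy^2$; since $y$ lies at or below the smaller root of the upward parabola $g$, we get $g(y) \ge 0$, that is $(x+2y)^2 \ge bxy^2$, equivalently (using $bx > 4$) $y \le x/\bigl(\sqrt{bx}-2\bigr)$. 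Combined with $y \ge x$ this yields $\sqrt{bx} \le 3$, i.e., $bx \le 9$. (In the boundary case $z = x+y$, the parabola $g$ has a double root and $bxy = 4(x+y)$; the identity for $g(y)$ then reads $g(y) = x^2 \ge 0$, so the conclusion still holds.)

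With $5 \le bx \le 9$ in hand, $(b,x)$ ranges over a finite list: $b \le 9$ because $x \ge 1$, and for each such $b$ we have $x \in \{\lceil 5/b\rceil, \dots, \lfloor 9/b\rfloor\}$. For each pair, nonnegativity of the discriminant $bxy\bigl(bxy - 4(x+y)\bigr)$ of $g$ forces $y \ge 4x/(bx-4)$, which together with $y \le x/\bigl(\sqrt{bx}-2\bigr)$ and $y \ge x$ confines $y$ to a short interval (in fact to at most two integers in each case). For each surviving $(b,x,y)$ I would solve $g(t) = 0$ and retain the triple exactly when the relevant root is an integer lying in $[y, x+y]$. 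This verification is short and entirely mechanical: the pair $b = 7$, $x = 1$ yields an empty interval for $y$, while the remaining pairs produce precisely the thirteen tuples of the table. (One sees the structure behind them: the four ``primitive'' solutions $(b;x,y,z) = (9;1,1,1),\,(8;1,1,2),\,(6;1,2,3),\,(5;1,4,5)$ generate all thirteen through the scalings $(x,y,z)\mapsto(dx,dy,dz)$, $b\mapsto b/d$ with $d \mid b$; and the whole argument is self-contained, re-proving that $b \le 9$ and $b \ne 7$.)

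I expect the only real obstacle to be bookkeeping rather than ideas: the derivation of $bx \le 9$ requires correctly identifying $z$ as the smaller root of $g$ and checking that the boundary case $z = x+y$ does not spoil the estimate, and the final step requires computing the $y$-interval accurately in each of the roughly fifteen pairs $(b,x)$ so that the finite check is genuinely exhaustive.
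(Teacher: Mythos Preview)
Your proof is correct, and it takes a genuinely different route from the paper's.

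The paper's argument leans on the already-proved Lemma~5.1 (that $b \le 9$ and $b \ne 7$), derives only the one-sided bound $x \le 16/b$ from $16y^{2} \ge (x+y+z)^{2} = bxyz \ge bxy^{2}$, and then parametrizes by $w \deftobe z - y \in [0,x]$, solving a quadratic in $y$ for each of $251$ triples $(b,x,w)$. Your approach instead establishes the tight two-sided bound $5 \le bx \le 9$ directly, using the observation that $z$ is the smaller root of $g$ so that $g(y) \ge 0$ yields $(x+2y)^{2} \ge bxy^{2}$. This is sharper and self-contained: it reduces the search to roughly fifteen pairs $(b,x)$, each with at most two candidate values of $y$, and it reproves $b \le 9$ and $b \ne 7$ along the way rather than assuming them. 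Your closing remark about the four primitive solutions and the scaling $(x,y,z) \mapsto (dx,dy,dz)$, $b \mapsto b/d$ is also a structural observation not present in the paper. The trade-off is that the paper's parametrization by $w$ is conceptually simpler (no need to identify which root of $g$ is $z$ or to handle the boundary case), at the cost of a larger but still trivial computer-assisted search.
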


\begin{proof}
    Using the inequalities $x \leq y \leq z \leq x + y \leq 2y$, we
    obtain $16y^2 \geq (x + y + 2y)^2 \geq (x + y + z)^2 = b x y z
    \geq bxy^2$, thus $1 \leq x \leq \frac{16}{b}$.  Let $w \deftobe
    z-y$.  Then $w$ satisfies $0 \leq w \leq x$.  For each choice of
    $w$ and $x$, the equation $(x+y+w+y)^2 = bxy(w+y)$ is quadratic in
    $y$, and so we may solve for $y$.  We then iterate over all $251$
    choices of integers $b,x,w$ with $1 \le b \le 9$, $b \ne 7$, $1
    \leq x \leq \frac{16}{b}$, $0 \leq w \leq x$, and obtain the
    complete list of Vieta-reduced solutions given above.
\end{proof}

Every positive-integer solution to $b = (x+y+z)^2/(xyz)$ can be found
by taking one of the Vieta-reduced solutions in \cref{thm:reduced},
reordering it, and then performing a sequence of ``Vieta jumps'',
reversing the reduction process in the proof of \cref{lem: a solution
implies a Vieta-reduced solution}.  These solutions are therefore the
nodes of a graph in which the edges correspond to Vieta jumps.  It
follows from \cref{thm:reduced} that this graph is a $3$-regular
rooted forest with~$51$ connected components falling into $13$
equivalence class modulo reordering, corresponding to the $51$
distinct reorderings of the $13$ solutions $(x,y,z)$ in
\cref{thm:reduced}.

\section{Infinite families of pseudoreflexive triangles}
\label{sec: Infinite families of pseudo-reflexive triangles}

Recall that we write $S_{b}$ for the set of all solutions $(x,y,z) \in
\Zp^{3}$ to the equation $b = \smash{(x + y + z)^{2}/(xyz)}$.  For
each Vieta-reduced solution in $S_{b}$, we will generate a particular
infinite family of solutions.  We will then use this family to
construct infinitely many triangular PRPs with $b$ lattice points on
their boundaries that are pairwise inequivalent modulo automorphisms
of the lattice, answering a question posed by Bohnert
\cite[Remark~5.9]{Boh2024preprint}.

As indicated at the end of \cref{sec: Number-theoretic lemmas for
PRPs}, each Vieta-reduced solution is the root of a rooted $3$-regular
tree of solutions.  The procedure that we give below for constructing
a PIP from a solution does not work for every solution in this tree.
However, by specializing how the Vieta jumps are performed, one can
generate an infinite family of solutions that are ``well-behaved'' for
our purposes, meaning that our procedure will construct a PIP from
each of the solutions in this family.

\begin{example}\label{exm: Fibonacci solution via Vieta}
    The family of solutions $(1, F_{2j-1}^{2}, F_{2j+1}^{2})$ for the
    case $b = 9$ from \cref{exm: Fibonacci solution} is generated by
    starting with the Vieta-reduced solution $(1, 1, 1)$ for $b = 9$
    and then successively Vieta jumping the second-largest value of
    $(x, y, z)$.
\end{example}

The ``Vieta-jumping recipe'' used in \cref{exm: Fibonacci solution via
Vieta} may be applied to any Vieta-reduced solution, which we do now
with a view towards constructing a corresponding infinite families of
PIPs.

For the remainder of this section, fix a positive integer~$b$ such
that $1 \le b \le 9$ and $b \ne 7$, as well as a Vieta-reduced
solution $s_{0} \in S_{b}$.  Thus,~$s_{0}$ is one of the solutions
listed in \cref{thm:reduced}.  Set $s_{0} \betodef (x, y_{0}, z_{0})$,
and recursively define $s_{j} = (x, y_{j}, z_{j})$ for all~$j \in \Zp$
by setting $y_{j+1} \deftobe z_{j}$
and %
$
    z_{j+1} 
        \deftobe %
        b x y_{j+1} - 2(x + y_{j+1}) - y_{j}
$ %
for all~$j \in \Znn$.

\begin{lemma}\label{lem: Infinitely many solutions}
    The triple $s_{j} = (x, y_{j}, z_{j})$ is a solution in~$S_{b}$
    with $z_{j-1} < z_{j}$ for all $j \in \Zp$.  Moreover, $x \divides
    y_{j}, z_{j}$ for all $j \in \Znn$.
\end{lemma}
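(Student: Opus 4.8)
The plan is to prove all three assertions at once by induction on $j$, strengthening the statement to carry along the order relation $x \le y_{j} \le z_{j}$, since this is what forces the strict monotonicity. The engine is Vieta's formulas applied to the quadratic that completes a given pair to a solution: for fixed $p, q \in \Zp$, a triple $(p, q, r) \in \Zp^{3}$ lies in $S_{b}$ if and only if $r$ is a root of $h_{p,q}(t) \deftobe t^{2} - \bigparens{b p q - 2(p+q)} t + (p+q)^{2}$ (just expand $(p + q + r)^{2} = b p q r$ as a polynomial in $r$), and of course $S_{b}$ is invariant under permuting coordinates.

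The strengthened induction hypothesis reads: $s_{j} \in S_{b}$, $x \le y_{j} \le z_{j}$, and $x \divides y_{j}$ and $x \divides z_{j}$. The base case $j = 0$ holds because $s_{0}$ is Vieta reduced, so $x \le y_{0} \le z_{0}$, and because inspection of \cref{thm:reduced} confirms that $x$ divides the other two entries of every Vieta-reduced solution. For the inductive step, assume the hypothesis for $j$. The reordered triple $(x, z_{j}, y_{j})$ lies in $S_{b}$, so $y_{j}$ is a root of $h_{x, z_{j}}$; let $z_{j+1}$ denote the other root. Vieta's sum relation gives $y_{j} + z_{j+1} = b x z_{j} - 2(x + z_{j})$, hence $z_{j+1} = b x z_{j} - 2(x + z_{j}) - y_{j}$, which is exactly the stated recursion with $y_{j+1} = z_{j}$; Vieta's product relation gives $y_{j} z_{j+1} = (x + z_{j})^{2}$. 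Thus $z_{j+1} = (x + z_{j})^{2}/y_{j}$ is positive, and it is an integer by the recursion, so $z_{j+1} \in \Zp$; being a root of $h_{x, z_{j}}$, the triple $s_{j+1} = (x, z_{j}, z_{j+1})$ lies in $S_{b}$.

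It remains to propagate the extra conditions to index $j+1$. From $x \le y_{j} \le z_{j}$ we get $x \le z_{j} = y_{j+1}$. For monotonicity, $(x + z_{j})^{2} \ge (1 + z_{j})^{2} > z_{j}^{2} \ge y_{j} z_{j}$, using $x \ge 1$ and $z_{j} \ge y_{j}$; dividing by $y_{j}$ gives $z_{j+1} > z_{j}$, which simultaneously yields $z_{j} < z_{j+1}$ and $y_{j+1} = z_{j} \le z_{j+1}$. Finally, $x \divides y_{j+1} = z_{j}$ by hypothesis, and writing $z_{j+1} = b x y_{j+1} - 2x - 2 y_{j+1} - y_{j}$ shows every summand is a multiple of $x$ (using $x \divides y_{j+1}$ and $x \divides y_{j}$), so $x \divides z_{j+1}$. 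This closes the induction and gives all of: $s_{j} \in S_{b}$ for every $j$, $z_{j-1} < z_{j}$ for $j \in \Zp$, and $x \divides y_{j}, z_{j}$ for $j \in \Znn$.

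I expect the only delicate point to be the bookkeeping in the inductive step — confirming that the "other root'' produced by Vieta's formulas from the pair $(x, z_{j})$ is precisely the $z_{j+1}$ of the stated recursion, rather than some other quantity. Once $h_{x,z_{j}}$ is correctly identified, positivity and integrality of $z_{j+1}$ are immediate, the strict inequality $z_{j} < z_{j+1}$ reduces to the elementary estimate $(x + z_{j})^{2} > y_{j} z_{j}$, and the divisibility claim propagates mechanically; its base case is a finite table check against \cref{thm:reduced} rather than a structural argument.
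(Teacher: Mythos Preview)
Your proof is correct and takes essentially the same Vieta-jumping approach as the paper. The only organizational difference is that you carry $x \le y_{j} \le z_{j}$ in the induction and derive $z_{j} < z_{j+1}$ from the elementary estimate $(x+z_{j})^{2} > y_{j}z_{j}$, whereas the paper checks the base case $j=1$ by inspection and then uses the comparison of $y_{j}$ with the geometric mean $x + y_{j+1}$ of the two roots to identify $z_{j+1}$ as the larger root; both routes are equivalent.
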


\begin{remark}
    The divisibility conditions satisfied by the solutions in
    \cref{lem: Infinitely many solutions} are not satisfied by all
    solutions.  An example for which these divisibility conditions
    fail is $(4,5,81) \in S_{5}$.  In particular, the procedure given
    below for constructing PIPs from certain solutions in $S_{5}$
    cannot be applied directly to the solution $(4,5,81)$.
\end{remark}

\begin{proof}[Proof of \cref{lem: Infinitely many solutions}]
    \begin{sloppypar}
        That $s_{j}$ is a solution satisfying $z_{j-1} < z_{j}$ holds in
        the base case $j = 1$ by inspection of the Vieta-reduced solutions
        in \cref{thm:reduced}.  Proceeding by induction, let $j \in \Zp$
        be such that $(x, y_{j}, z_{j})$ is a solution with $z_{j-1} <
        z_{j}$.  Then~${(x, y_{j+1}, y_{j}) = (x, z_{j}, y_{j})}$ is a
        solution with $y_{j} = z_{j-1} < z_{j} < x + z_{j} = x + y_{j+1}$.
        Thus, $y_{j}$ is the smaller of the two roots of the polynomial
        \begin{equation*}
            t^{2}
            - \bigparens{b x y_{j+1} - 2(x + y_{j+1})} t
            + (x + y_{j+1})^{2}.
        \end{equation*} 
        Hence, $z_{j+1}$ is the larger of the roots of this polynomial,
        which means that $(x, y_{j+1}, z_{j+1})$ is a solution with $z_{j}
        < x + y_{j+1} < z_{j+1}$.
    \end{sloppypar}
    
    That $s_{j}$ satisfies the divisibility conditions $x \divides
    y_{j}, z_{j}$ again holds in the base case~${j = 0}$ by inspection
    of the Vieta-reduced solutions.  Let $j \in \Znn$ be such that ${x
    \divides y_{j}, z_{j}}$.  Then $x \divides z_{j} = y_{j+1}$ and $x
    \divides b x z_{j} - 2(x + z_{j}) - y_{j} = b x y_{j+1} - 2(x +
    y_{j+1}) - y_{j} = z_{j+1}$, and the claim is proved.
\end{proof}

Let $\SSS \deftobe \setof{s_{j} \st j \in \Znn}$ be the family of
solutions in $S_{b}$ generated by our chosen Vieta-reduced solution
$s_{0}$ as described above.  Note that $\SSS$ is infinite because the
``$z$-values'' are strictly increasing by \cref{lem: Infinitely many
solutions}.  For each $(x,y,z) \in \SSS$, let $T_{xyz}$ be the
triangle defined by $T_{xyz} \deftobe \setof{a \in \R^{2} \st
\text{$\inner{u_{i}, a} \le 1$ for $i = 1,2,3$}}$, where
\begin{align}\label[eqns]{eqns: outer normals}
    u_{1} 
        & \deftobe %
        \begin{bmatrix}
            y \\
            \tfrac{y + z}{x}
        \end{bmatrix},
    &
    u_{2} 
        & \deftobe %
        \begin{bmatrix}
            -x \\
            -1
        \end{bmatrix},
    &
    u_{3} 
        & \deftobe %
        \begin{bmatrix*}[r]
             0 \\
            -1
        \end{bmatrix*}.
\end{align}

\begin{theorem}\label{thm: Infinitely many PRPs}
    For every solution $(x,y,z) \in \SSS$, the triangle $T_{xyz}$
    defined above is a PIP with $\intp{T} = 1$ and $\bndp{T} = b$.
\end{theorem}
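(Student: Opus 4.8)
The plan is to compute the Ehrhart function of $T \deftobe T_{xyz}$ directly and extract the three assertions from it. First set $\bar{y} \deftobe y/x$ and $\bar{z} \deftobe z/x$; by \cref{lem: Infinitely many solutions} these are positive integers, so $u_1 = \bigparens{y,\ \bar{y} + \bar{z}} \in \Z^2$, and the defining relation $(x+y+z)^2 = bxyz$ is equivalent to $(1 + \bar{y} + \bar{z})^2 = b x \bar{y} \bar{z}$. A direct computation gives
\begin{equation*}
    \det(u_1, u_2) = z, \qquad \det(u_2, u_3) = x, \qquad \det(u_3, u_1) = y,
\end{equation*}
all positive; together with $\inner{u_i, 0} = 0 < 1$ for each $i$, this shows that $u_1, u_2, u_3$ are pairwise non-parallel, are listed in counterclockwise order, and bound a triangle $T$ with $0 \in \intr{T}$. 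Now $u_2 = (-x, -1)$ and $u_3 = (0, -1)$ are visibly primitive, and $u_1$ is primitive as well: a prime $p$ dividing $\gcd\bigparens{y,\ \bar{y} + \bar{z}}$ would, if $p \nmid x$, force $p \mid \bar{y}$ and hence $p \mid \bar{z}$, so that $p^2 \mid \bar{y}\bar{z} \mid b x \bar{y}\bar{z} = (1 + \bar{y} + \bar{z})^2$ and therefore $p \mid 1 + \bar{y} + \bar{z}$, contradicting $p \mid \bar{y} + \bar{z}$; and if instead $p \mid x$, then $p \mid b x \bar{y}\bar{z} = (1 + \bar{y} + \bar{z})^2$, giving the same contradiction. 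Hence $u_1, u_2, u_3$ are the primitive outer edge normals of $T$, each at lattice distance $1$ from the origin.

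By Cramer's rule (as in the proof of \cref{prop: edge vector of a polygon}) the vertices of $T$ are $(0, -1)$, $\parens{\frac{x+y+z}{xy},\, -1}$, and $\parens{-\frac{x+y+z}{xz},\, \frac{x+y}{z}}$; the first two lie on the lattice line $\setof{a_2 = -1}$, and base times height gives $\area{T} = \frac{(x+y+z)^2}{2xyz} = \frac b2$. To compute $\ehr{T}(t) = \ncard{tT \cap \Z^2}$, I would slice $tT = \setof{a \st -a_2 \le t,\ -x a_1 - a_2 \le t,\ y a_1 + (\bar{y} + \bar{z}) a_2 \le t}$ by the horizontal lines $\setof{a_2 = j}$ for $j \in \Z$. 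The lattice points of $tT$ at height $a_2 = j$ are those whose first coordinate lies in the integer interval $\bracks*{\ceil{\tfrac{-t-j}{x}},\ \floor{\tfrac{tx - (y+z)j}{xy}}}$, and summing the sizes of these intervals (which for the non-degenerate slices equal $\floor{\tfrac{t+j}{x}} + \floor{\tfrac{tx - (y+z)j}{xy}} + 1$) over $-t \le j \le \floor{\tfrac{t(x+y)}{z}}$ gives $\ehr{T}(t)$. The goal is to show that this sum equals $\frac b2 t^2 + \frac b2 t + 1$.

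The main obstacle is precisely this evaluation. Each of the floor-sums $\sum_j \floor{\tfrac{t+j}{x}}$ and $\sum_j \floor{\tfrac{tx - (y+z)j}{xy}}$ carries periodic, Dedekind-sum-type corrections that are \emph{not} polynomial in $t$ by themselves; the point is that these corrections, together with the one hidden in the upper summation limit $\floor{\tfrac{t(x+y)}{z}}$ (and the handful of lattice-point-free slices near the apex), must all cancel, and it is exactly the Diophantine identity $(x+y+z)^2 = bxyz$ (equivalently $(1 + \bar{y} + \bar{z})^2 = b x \bar{y} \bar{z}$) that forces this. Making this precise---presumably via a reciprocity law for such sums together with careful bookkeeping near the apex $\parens{-\tfrac{x+y+z}{xz},\, \tfrac{x+y}{z}}$---is the technical heart of the argument. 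An alternative is to verify condition~(iii) of \cref{thm: PIP criteria} directly, i.e., that $\bndp{tT} = t\,\bndp{T}$ and that $tT$ satisfies Pick's formula for every $t$; but this requires a closely related count, in particular of the lattice points on the edge joining $\parens{\tfrac{x+y+z}{xy}, -1}$ and $\parens{-\tfrac{x+y+z}{xz}, \tfrac{x+y}{z}}$, whose endpoints need not themselves be lattice points.

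Once $\ehr{T}(t) = \frac b2 t^2 + \frac b2 t + 1$ is in hand, $T$ is a PIP by definition, and Ehrhart--Macdonald reciprocity gives $\intp{T} = \ehr{\intr{T}}(1) = \ehr{T}(-1) = 1$, whence $\bndp{T} = \ehr{T}(1) - \intp{T} = (b + 1) - 1 = b$. (Alternatively, once $T$ is known to be a PIP, \cref{prop: characterization of PIPS with i = 1} gives $\intp{T} = 1$ from the fact that every edge is at lattice distance $1$, and then \cref{prop: b for triangular PRPs} applied with the triple $(x, y, z)$---which, by the counterclockwise order and the determinant values above, is exactly the triple that \cref{cor: lattice length of an edge in a triangle} associates to the edge with normal $u_1$---gives $\bndp{T} = (x+y+z)^2/(xyz) = b$.)
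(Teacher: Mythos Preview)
Your preliminary work is correct and useful: the determinant computations, the primitivity of $u_1$, the vertex formulas, and the area calculation all check out, and your final paragraph cleanly extracts $\intp{T}=1$ and $\bndp{T}=b$ once the Ehrhart polynomial is known. The problem is that you never actually establish the Ehrhart polynomial. You set up a horizontal-slice count, write $\ehr{T}(t)$ as a sum of floor functions, and then say in so many words that evaluating it ``is the technical heart of the argument'' and that the periodic corrections must cancel ``presumably via a reciprocity law\ldots together with careful bookkeeping.'' That is a description of a proof, not a proof. The Dedekind-sum cancellation you allude to is genuinely delicate here---the three edges have denominators $x$, $y$, $z$ with no coprimality assumed, and the identity $(x+y+z)^2 = bxyz$ has to be fed in at exactly the right moment---and nothing you have written shows how to do it. Your alternative via condition~(iii) of \cref{thm: PIP criteria} has the same gap: you would still need to count lattice points on the oblique edge of $tT$ for all $t$, which is the same computation in thin disguise.

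The paper sidesteps this difficulty completely with a geometric argument. It cuts $T$ into three partially-open triangles $T_1, T_2, T_3$ by coning from the origin to the edges, then applies explicit $\SL_2(\Z)$ shears (one parallel to $v_{23}$, one parallel to $v_{31}$, then a final horizontal shear by $-b$) to $T_1$ and $T_2$ so that the images, together with $T_3$, tile the partially-open \emph{integral} triangle $B$ with vertices $(0,0)$, $(0,-1)$, $(b,-1)$. The divisibility conditions $x \mid y$ and $x \mid z$ from \cref{lem: Infinitely many solutions} are exactly what make the shear matrices integral. Since lattice automorphisms preserve Ehrhart functions and $B$ is built from lattice data, $\ehr{T} = \ehr{B} = \tfrac{b}{2}t^2 + \tfrac{b}{2}t + 1$ follows with no floor sums at all. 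If you want to complete your argument along its own lines you would have to carry out the reciprocity computation in full; the shear decomposition is both shorter and more transparent about \emph{why} the cancellation occurs.
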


\begin{example}
    The triangle $T_{j}$ in \cref{exm: Fibonacci PRPs} is the PIP
    $T_{xyz}$ in the case where $(x, y, z) \deftobe (1, F_{2j-1}^{2},
    F_{2j+1}^{2})$ is the solution discussed in \cref{exm: Fibonacci
    solution,exm: Fibonacci solution via Vieta}.
\end{example}

While reading the proof of \cref{thm: Infinitely many PRPs}, recall
that the matrix
\begin{equation*}
    \begin{bmatrix}
        1 - rs & r^{2}  \\
        -s^{2} & 1 + rs
    \end{bmatrix}
\end{equation*}
defines a linear transformation that shears the plane parallel to the
vector $(r,s)$ for all nonzero $(r, s) \in \R^{2}$.  Moreover, if the
entries of the matrix are all integers, then this linear
transformation restricts to an automorphism of $\Z^{2}$ and therefore
preserves Ehrhart quasipolynomials.

\begin{proof}[Proof of \cref{thm: Infinitely many PRPs}]
    We will show that the Ehrhart quasipolynomial of $T \deftobe
    T_{xyz}$ equals the Ehrhart polynomial of the partially-open
    integral triangle
    \begin{equation*}
        B 
            \deftobe %
            \bigparens{
                \conv\setof{(0,0), (0, -1), (b, -1)}
                \setminus
                \conv\setof{(0,0), (b, -1)}
            }
            \cup
            \setof{(0,0)}.
    \end{equation*}
    This partially open triangle has Ehrhart polynomial $\ehr{B}(t) =
    \parens{\tfrac{b}{2} t^{2} + \tfrac{b+2}{2}t + 1} - \parens{t + 1}
    + 1 = \tfrac{b}{2} t^{2} + \tfrac{b}{2}t + 1$.  Once we have shown
    that $\ehr{T} = \ehr{B}$, it will then follow that $T$ is a PIP,
    and moreover that $\intp{T} = 1$ and $\bndp{T} = b$ by \cref{thm:
    PIP criteria}.

    To this end, let $E_{i}$ be the edge of $T$ that is perpendicular
    to $u_{i}$, and let $v_{i,i+1}$ be the vertex at the intersection
    of $E_{i}$ and $E_{i+1}$, for all $i \in \Zmod{3}$.  One finds
    that
    \begin{align}\label[eqns]{eqns: vertices of infinitely many triangular PRPs}
        v_{12} 
            & = %
            \begin{bmatrix*}[c]
                -\tfrac{1}{xz}\parens{x + y + z} \\
                \tfrac{1}{z}(x + y)
            \end{bmatrix*},
        &
        v_{23} 
            & = %
            \begin{bmatrix*}[r]
                0 \\
                -1
            \end{bmatrix*},
        &
        v_{31} 
            & = %
            \begin{bmatrix*}[c]
                \tfrac{1}{xy}\parens{x + y + z} \\
                -1
            \end{bmatrix*}.
    \end{align}
    
    The triangle $T$ decomposes as a disjoint union of partially-open
    triangles $T_{i}$ over the edges $E_{i}$.  More precisely, let
    \begin{align*}
        T_{1} 
            & \deftobe %
            \conv \setof{0, v_{12}, v_{23}} 
            \setminus
            \conv \setof{0, v_{12}}, \\
        T_{2} 
            & \deftobe %
            \conv \setof{0, v_{23}, v_{31}}
            \setminus
            \conv \setof{0, v_{23}}, \\
        T_{3} 
            & \deftobe %
            \bigparens{%
                \conv \setof{0, v_{31}, v_{12}}
                \setminus
                \conv \setof{0, v_{31}}
            }%
            \cup
            \setof{0}.
    \end{align*}
    Thus, $T = T_{1} \sqcup T_{2} \sqcup T_{3}$ as a disjoint union.
    We will prove that $\ehr{T} = \ehr{B}$ by showing that $B$ is a
    disjoint union of partially-open triangles that are images of
    $T_{1}, T_{2}, T_{3}$, respectively, under automorphisms of the
    integer lattice.
    
    To assist in following the argument, we will take as a running
    example the Vieta-reduced solution $(3,6,9) \in S_{2}$.  In this
    case,
    \begin{equation*}
        T_{1}
        \sqcup
        T_{2}
        \sqcup
        T_{3}
            \quad = \quad
            \vcenter{\hbox{\scalebox{1}{\begin{tikzpicture}
                \drawlatticegrid{-1}{2}{-1}{1}{3}
                
                \fill[fill=black!25]
                    (0, 0) -- (1, -1) -- (-2/3, 1) -- cycle;
                \begin{pgfonlayer}{polygon1}
                    \draw [thick] (0, 0) -- (1, -1);
                    \draw [thick] (1, -1) -- (-2/3, 1);
                \end{pgfonlayer}

                \fill[fill=black!25]
                    (0, 0) -- (-2/3, 1) -- (0, -1) -- cycle;
                \begin{pgfonlayer}{polygon1}
                    \draw [thick] (0,0) -- (-2/3, 1);
                    \draw [thick] (-2/3, 1) -- (0, -1);
                \end{pgfonlayer}

                \fill[fill=black!25]
                    (0, 0) -- (0, -1) -- (1, -1) -- cycle;
                \begin{pgfonlayer}{polygon1}
                    \draw [thick] (0, 0) -- (0, -1);
                    \draw [thick] (0, -1) -- (1, -1);
                \end{pgfonlayer}
            \end{tikzpicture}}}}\;.
    \end{equation*}

    Let
    \begin{align*}
        U_{23}
            & \deftobe %
            \begin{bmatrix*}[c]
                1 & 0 \\
                x & 1
            \end{bmatrix*},
        &
        U_{31}
            \deftobe
            \begin{bmatrix*}[c]
                 1 + \tfrac{x + y + z}{x} & \tfrac{(x+y+z)^{2}}{x^{2}y} \\
                -y & 1 - \tfrac{x + y + z}{x}
            \end{bmatrix*}.
    \end{align*}
    Then, for $i \in \setof{2, 3}$, the matrix $U_{i, i+1}$
    corresponds to a linear transformation that shears the plane
    parallel to the vector $v_{i, i+1}$ and maps the vertex $v_{12}$
    into the horizontal line through $(0,-1)$, which contains the edge
    $E_{3}$.  Moreover, $U_{i, i+1}$ corresponds to an automorphism of
    the integer lattice because $(x,y,z)$ satisfies the divisibility
    conditions in \cref{lem: Infinitely many solutions}.  Let $T_{1}'
    \deftobe U_{31}T_{1}$ and $T_{2}' \deftobe U_{23}T_{2}$.  Define
    $T' \deftobe T_{1}' \cup T_{2}' \cup T_{3}$.  Then $T' = T_{1}'
    \sqcup T_{2}' \sqcup T_{3}$ as a disjoint union.  It follows that
    $\ehr{T} = \ehr{T'}$.
    
    In our running example, applying $U_{31}$ followed by $U_{23}$
    yields
    \begin{equation*}
        T_{1}'
        \sqcup
        T_{2}
        \sqcup
        T_{3}
            \quad = \quad
            \vcenter{\hbox{\scalebox{1}{\begin{tikzpicture}
                \drawlatticegrid{-1}{2}{-1}{1}{3}
                
                \fill[fill=black!25]
                    (0, 0) -- (1, -1) -- (4/3, -1) -- cycle;
                \begin{pgfonlayer}{polygon1}
                    \draw [thick] (0, 0) -- (1, -1);
                    \draw [thick] (1, -1) -- (4/3, -1);
                    \draw [thick, dashed] (0, 0) -- (4/3, -1);
                \end{pgfonlayer}

                \fill[fill=black!25]
                    (0, 0) -- (-2/3, 1) -- (0, -1) -- cycle;
                \begin{pgfonlayer}{polygon1}
                    \draw [thick] (0,0) -- (-2/3, 1);
                    \draw [thick] (-2/3, 1) -- (0, -1);
                \end{pgfonlayer}

                \fill[fill=black!25]
                    (0, 0) -- (0, -1) -- (1, -1) -- cycle;
                \begin{pgfonlayer}{polygon1}
                    \draw [thick] (0, 0) -- (0, -1);
                    \draw [thick] (0, -1) -- (1, -1);
                \end{pgfonlayer}
            \end{tikzpicture}}}}\;.
    \end{equation*}
    followed by
    \begin{equation*}
        T_{1}'
        \sqcup
        T_{2}'
        \sqcup
        T_{3}
            \quad = \quad
            \vcenter{\hbox{\scalebox{1}{\begin{tikzpicture}
                \drawlatticegrid{-1}{2}{-1}{1}{3}
                
                \fill[fill=black!25]
                    (0, 0) -- (1, -1) -- (4/3, -1) -- cycle;
                \begin{pgfonlayer}{polygon1}
                    \draw [thick] (0, 0) -- (1, -1);
                    \draw [thick] (1, -1) -- (4/3, -1);
                    \draw [thick, dashed] (0, 0) -- (4/3, -1);
                \end{pgfonlayer}

                \fill[fill=black!25]
                    (0, 0) -- (-2/3, -1) -- (0, -1) -- cycle;
                \begin{pgfonlayer}{polygon1}
                    \draw [thick] (0,0) -- (-2/3, -1);
                    \draw [thick] (-2/3, -1) -- (0, -1);
                \end{pgfonlayer}

                \fill[fill=black!25]
                    (0, 0) -- (0, -1) -- (1, -1) -- cycle;
                \begin{pgfonlayer}{polygon1}
                    \draw [thick] (0, 0) -- (0, -1);
                    \draw [thick] (0, -1) -- (1, -1);
                \end{pgfonlayer}
            \end{tikzpicture}}}}\;.
    \end{equation*}

    Finally, let $T_{2}'' \deftobe \rinlinematrix{1}{-b}{0}{1}
    T_{2}'$.  Then $B = T_{1}' \sqcup T_{2}'' \sqcup T_{3}$ as a
    disjoint union.  In our running example, we have
    \begin{equation*}
        T_{1}'
        \sqcup
        T_{2}''
        \sqcup
        T_{3}
            \quad = \quad
            \vcenter{\hbox{\scalebox{1}{\begin{tikzpicture}
                \drawlatticegrid{-1}{2}{-1}{1}{3}
                
                \fill[fill=black!25]
                    (0, 0) -- (1, -1) -- (4/3, -1) -- cycle;
                \begin{pgfonlayer}{polygon1}
                    \draw [thick] (0, 0) -- (1, -1);
                    \draw [thick] (1, -1) -- (4/3, -1);
                    \draw [thick, dashed] (0, 0) -- (4/3, -1);
                \end{pgfonlayer}

                \fill[fill=black!25]
                    (0, 0) -- (4/3, -1) -- (2, -1) -- cycle;
                \begin{pgfonlayer}{polygon1}
                    \draw [thick] (0,0) -- (4/3, -1);
                    \draw [thick] (4/3, -1) -- (2, -1);
                    \draw [thick, dashed] (0, 0) -- (2, -1);
                \end{pgfonlayer}

                \fill[fill=black!25]
                    (0, 0) -- (0, -1) -- (1, -1) -- cycle;
                \begin{pgfonlayer}{polygon1}
                    \draw [thick] (0, 0) -- (0, -1);
                    \draw [thick] (0, -1) -- (1, -1);
                \end{pgfonlayer}
            \end{tikzpicture}}}}\;.
    \end{equation*}
    Since $b \in \Z$, the matrix $\rinlinematrix{1}{-b}{0}{1}$ is
    again an automorphism of the integer lattice.  Therefore, $\ehr{T}
    = \ehr{B}$, as required.
\end{proof}

The last remaining piece in the proof of our second main result
(\cref{thm: Infinitely many triangular PRPs}) is to show that the PIPs
constructed in \cref{thm: Infinitely many PRPs} have arbitrarily large
denominators.  (The fractions in \cref{eqns: vertices of infinitely
many triangular PRPs} are often not reduced.)

To this end, we first ``regularize'' the map $(T, E) \mapsto (x, y,
z)$ in the statement of \cref{cor: lattice length of an edge in a
triangle} so that we have a well-defined invariant for equivalence
classes of rational triangles modulo automorphisms of the lattice.

\begin{definition}\label{defn: an invariant for rational triangles}
    Let $\T$ be the set of rational triangles in $\R^{2}$.  Define a
    map ${V \maps \T \to \Zp^{3}}$ as follows.  Given $T \in \T$, let
    $u, v, w$ be the respective primitive outer normals of $T$ (in
    arbitrary order).  Let $x_{0} \deftobe \det(v, w)$, $y_{0}
    \deftobe \det(w, u)$, and $z_{0} \deftobe \det(u, v)$.  Finally,
    let $(x, y, z)$ be the result of rewriting $(x_{0}, y_{0}, z_{0})$
    in increasing order, and define $V(T) \deftobe (x, y, z)$.
\end{definition}

Observe that each solution $(x,y,z) \in \SSS$ is in increasing order
by \cref{lem: Infinitely many solutions}.  Furthermore, $V(T_{xyz}) =
(x,y,z)$ by \cref{eqns: outer normals}.  The following proposition
proves that $V$ is an invariant of rational triangles under the action
of $\GL_{2}(\Z)$.

\begin{proposition}
    We have that $V(T) = V(UT)$ for all rational triangles $T \in \T$
    and lattice automorphisms $U \in \GL_{2}(\Z)$.
\end{proposition}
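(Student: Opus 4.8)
The plan is to track the data defining $V(T)$ through the substitution $T \mapsto UT$. Since $U \in \GL_{2}(\Z)$, so is $(U^{\top})^{-1}$; in particular both $U$ and $(U^{\top})^{-1}$ carry $\Z^{2}$ bijectively onto itself and hence send primitive lattice vectors to primitive lattice vectors. First I would record the effect of $U$ on the supporting inequalities of a triangle. Writing $T = \setof{a \in \R^{2} \st \inner{n_{k}, a} \le c_{k} \text{ for } k = 1, 2, 3}$ with these three inequalities irredundant, the identity $\inner{n_{k}, U^{-1}b} = \inner{(U^{\top})^{-1} n_{k}, b}$ gives $UT = \setof{b \in \R^{2} \st \inner{(U^{\top})^{-1} n_{k}, b} \le c_{k} \text{ for } k = 1, 2, 3}$, again an irredundant description because $U$ is a linear isomorphism. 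Hence $UT \in \T$, and if $u_{1}, u_{2}, u_{3}$ are the primitive outer normals of the three edges of $T$, then $(U^{\top})^{-1} u_{1}, (U^{\top})^{-1} u_{2}, (U^{\top})^{-1} u_{3}$ are the primitive outer normals of the three edges of $UT$.

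Next I would pin down what $V$ actually records. Listing $u_{1}, u_{2}, u_{3}$ counterclockwise about $T$, the integers $\det(u_{1}, u_{2})$, $\det(u_{2}, u_{3})$, $\det(u_{3}, u_{1})$ are positive by \cref{prop: edge vector of a polygon}, and they are exactly the quantities $\det(v, w)$, $\det(w, u)$, $\det(u, v)$ of \cref{defn: an invariant for rational triangles} for the ordering $(u, v, w) \deftobe (u_{1}, u_{2}, u_{3})$. The three counterclockwise listings of the normals differ by a cyclic rotation, so they all produce the same sorted triple. Therefore $V(T)$ is the nondecreasing rearrangement of the multiset $\setof{\bverts{\det(u_{i}, u_{j})} \st 1 \le i < j \le 3}$, a description depending only on the \emph{unordered} set of primitive outer normals of $T$.

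The conclusion is then immediate: for all $i \ne j$,
\begin{equation*}
    \bverts{\det\!\bigparens{(U^{\top})^{-1} u_{i},\, (U^{\top})^{-1} u_{j}}}
        = \bverts{\det\!\bigparens{(U^{\top})^{-1}}} \; \bverts{\det(u_{i}, u_{j})}
        = \bverts{\det(u_{i}, u_{j})},
\end{equation*}
since $\bverts{\det\!\bigparens{(U^{\top})^{-1}}} = \bverts{\det U}^{-1} = 1$. Thus the primitive outer normals of $UT$ have the same multiset of pairwise absolute determinants as those of $T$, and so $V(UT) = V(T)$.

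The one point requiring care is orientation: when $\det U = -1$ the map $U$ is orientation-reversing, so a counterclockwise listing of the edges of $T$ pushes forward to a \emph{clockwise} listing of the edges of $UT$, negating each $\det(u_{i}, u_{j})$. Working with absolute values---equivalently, observing that $V$ sees only the unoriented intersection numbers of the edge normals---makes this sign issue disappear, and it is also what makes the well-definedness of $V$ transparent.
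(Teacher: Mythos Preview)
Your argument is correct and follows essentially the same route as the paper: show that the primitive outer normals of $UT$ are the $(U^{\top})^{-1}$-images of those of $T$, and then use multiplicativity of the determinant together with $\lvert\det U\rvert = 1$. You are in fact more careful than the paper about the orientation-reversing case $\det U = -1$, handling it explicitly via absolute values where the paper simply asserts $\det(U^{-t}) = 1$.
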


\begin{proof}
    Let $u, v, w$ be the respective primitive outer normals of $T$ (in
    arbitrary order).  Thus,
    \begin{equation*}
        T
            =%
            \setof{%
                a \in \R^{2} 
                \st
                \inner{u, a} \le \alpha, \,
                \inner{v, a} \le \beta, \,
                \inner{w, a} \le \gamma
            }%
    \end{equation*}
    for some $\alpha, \beta, \gamma \in \Q$.  Let $x_{0} \deftobe
    \det(v, w)$, $y_{0} \deftobe \det(w, u)$, and $z_{0} \deftobe
    \det(u, v)$.  Thus, $V(T)$ is, up to reordering, $(x_{0}, y_{0},
    z_{0})$.  On the other hand,
    \begin{align*}
        UT
            &=%
            \setof{%
                a \in \R^{2} 
                \st
                \inner{u, U^{-1}a} \le \alpha, \,
                \inner{v, U^{-1}a} \le \beta, \,
                \inner{w, U^{-1}a} \le \gamma
            } \\%
            &=%
            \setof{%
                a \in \R^{2} 
                \st
                \inner{U^{-t}u, a} \le \alpha, \,
                \inner{U^{-t}v, a} \le \beta, \,
                \inner{U^{-t}w, a} \le \gamma
            },%
    \end{align*}
    where $U^{-t}$ denotes the transpose of the inverse of $U$.  Since
    $\det(U^{-t}) = 1$, we have that $x_{0} = \det(U^{-t}v, U^{-t}w)$,
    and similar equations hold for $y_{0}$ and $z_{0}$.  Therefore,
    $V(UT)$ is also $(x_{0}, y_{0}, z_{0})$, up to reordering.
\end{proof}

We are now ready to prove our second main result (\cref{thm:
Infinitely many triangular PRPs}), which we restate here for the
convenience of the reader.

\begin{theorem}\label{thm: Infinitely many triangular PRPs - restated}
    Let $b \in \Z$ be such that $1 \le b \le 9$ and $b \ne 7$.  Then,
    for all $k \in \Z$, there exists a rational pseudointegral
    triangle with denominator $> k$ that has exactly one interior
    lattice point and $b$ boundary lattice points.
\end{theorem}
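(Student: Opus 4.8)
The plan is to extract the required triangles from the infinite family already produced by \cref{thm: Infinitely many PRPs}, and to use the invariant $V$ of \cref{defn: an invariant for rational triangles} to force the denominators in that family to grow without bound. Fix $b \in \Z$ with $1 \le b \le 9$ and $b \ne 7$. By \cref{thm:reduced} there is a Vieta-reduced solution $s_{0} \in S_{b}$, and the recursion introduced just before \cref{lem: Infinitely many solutions} produces from it the family $\SSS = \setof{s_{j} \st j \in \Znn}$, with $s_{j} = (x, y_{j}, z_{j})$. By \cref{lem: Infinitely many solutions} the values $z_{j}$ are strictly increasing in $j$, so the triples $s_{j}$ are pairwise distinct; and by \cref{thm: Infinitely many PRPs} each triangle $T_{j} \deftobe T_{x y_{j} z_{j}}$ is a PIP with $\intp{T_{j}} = 1$ and $\bndp{T_{j}} = b$. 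In particular, \cref{thm: PIP criteria} gives $\area{T_{j}} = 1 + \tfrac12 \bndp{T_{j}} - 1 = \tfrac{b}{2}$ for every $j$.

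The first point to record is that $V$ distinguishes the triangles $T_{j}$. Indeed $V(T_{j}) = s_{j}$ by \cref{eqns: outer normals}, and the $s_{j}$ are pairwise distinct. Moreover $V$ is invariant not only under $\GL_{2}(\Z)$, by the proposition preceding this theorem, but also under translations and under scaling by a positive real, since the primitive outer normals of a convex triangle are unchanged when the triangle is translated or dilated by a positive factor. These transformations generate a group, and $V$ is constant on each of its orbits; hence $T_{j}$ and $T_{j'}$ lie in different orbits whenever $j \ne j'$.

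Next I would show that the denominators $\den(T_{j})$ are unbounded. Suppose instead that $\den(T_{j}) \le k$ for all $j$ and some fixed $k$. Then each $\den(T_{j})\,T_{j}$ is an integral triangle of area $\den(T_{j})^{2}\area{T_{j}} = \den(T_{j})^{2}\,b/2 \le k^{2}b/2$. But integral triangles of area at most a fixed bound fall into finitely many classes under $\GL_{2}(\Z)$ and translations: picking an edge and applying a suitable element of $\GL_{2}(\Z)$ together with a translation, one may assume that edge runs from $(0,0)$ to $(\ell,0)$ with $\ell \in \Zp$; after a further shear fixing the $x$-axis, the third vertex is $(c,h)$ with $h \in \Zp$ and $0 \le c < h$; and the area equals $\tfrac12 \ell h$, so $\ell h$ is bounded, leaving only finitely many possibilities for $(\ell, h, c)$. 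Therefore some $\den(T_{j})\,T_{j}$ and $\den(T_{j'})\,T_{j'}$ with $j \ne j'$ are related by an element of $\GL_{2}(\Z)$ and a translation; scaling that relation by $\den(T_{j})^{-1}$ exhibits $T_{j}$ as the image of $T_{j'}$ under a positive dilation followed by an element of $\GL_{2}(\Z)$ and a translation, which contradicts the conclusion of the previous paragraph.

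It follows that for every $k \in \Z$ there is some $j$ with $\den(T_{j}) > k$, and this $T_{j}$ is a rational pseudointegral triangle with exactly one interior lattice point and $b$ boundary lattice points, as required. The substantive step is the unboundedness of the denominators; the remainder is assembled from results already established, the only external ingredient being the elementary finiteness of integral triangles of bounded area modulo affine unimodular transformations (a special case of the classical finiteness of lattice polytopes of bounded volume in fixed dimension).
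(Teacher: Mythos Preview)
Your proof is correct, and it follows the paper's overall strategy---build the infinite family $\SSS$, use the invariant $V$ to show the $T_j$ are pairwise inequivalent, and conclude that the denominators must be unbounded---but your finiteness argument is genuinely different from the paper's.

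The paper closes the proof by invoking the Lagarias--Ziegler theorem \cite{LagZie1991}: there are only finitely many rational polygons of denominator $\le k$ with exactly one interior lattice point, up to lattice equivalence. You instead observe that $V$ is invariant not only under $\GL_2(\Z)$ (which the paper proves) but also under translations and positive dilations, since none of these change the primitive outer normals. This lets you pass to the integral triangles $\den(T_j)\,T_j$, whose areas are uniformly bounded by $k^2 b/2$ via Pick's formula, and then appeal to the elementary fact that integral triangles of bounded area fall into finitely many affine unimodular classes (which you sketch directly). Two of these must then be equivalent, and undoing the scaling gives $V(T_j) = V(T_{j'})$, the desired contradiction. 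The gain is that your argument is self-contained: it avoids the external Lagarias--Ziegler citation at the cost of one extra observation about $V$ and a short normal-form computation. The paper's route is shorter to state but relies on a deeper black box.
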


\begin{proof}
    Let $k \in \Zp$.  Since $\SSS$ is infinite and $V$ is an invariant
    of rational triangles, the set $\setof{T_{xyz} \st (x,y,z) \in
    \SSS}$ is an infinite set of pairwise-nonequivalent PIPs with
    exactly one interior lattice point and~$b$ boundary lattice
    points.  (Here, two triangles $T_{1}, T_{2} \subset \R^{2}$ are
    called equivalent if $UT_{1} = T_{2}$ for some $U \in
    \GL_{2}(\Z)$.)  Now, by a result of
    Lagarias~\&~Ziegler~\cite[Theorem~2]{LagZie1991}, there exist only
    finitely many rational polygons (up to lattice equivalence) of
    denominator $\le k$ that contain exactly one interior lattice
    point.  The result follows.
\end{proof}

\section{New Ehrhart polynomials}
\label{sec: New Ehrhart polynomials}

In our final section, we construct non-integral polygonal PIPs with a
number of boundary lattice points exceeding the number attainable by
integral polygons with the same number $i$ of interior lattice points
by a factor of $5/2$ (asymptotically in~$i$).  We repeat \cref{thm:
third main result} here for the convenience of the reader.

\begin{theorem}\label{thm: third main result - restated}
    For all $i, b \in \Zp$ such that $b \le 5i + 4$, there exists a
    rational polygonal PIP~$P$ such that $\intp{P} = i$ and $\bndp{P}
    = b$.
\end{theorem}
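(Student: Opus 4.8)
The plan is to reduce \cref{thm: third main result - restated} to a narrow ``new'' range of invariants, realize that range with an explicit family of rational polygons whose vertices lie in $(\tfrac{1}{2}\Z^{2}) \cup (\tfrac{1}{5}\Z^{2})$, and certify pseudointegrality by the shearing technique from the proof of \cref{thm: Infinitely many PRPs}. For the reduction: if $i = 1$ then $5i + 4 = 9$, and every pair $(1, b)$ with $1 \le b \le 9$ is already realized, since \cref{thm: Scott's inequality} covers $3 \le b \le 9$ and \cite{McAMor2017} (see \cref{exm: PIPs with 1 or 2 boundary points}) covers $b \in \setof{1, 2}$. If $i \ge 2$, then Bohnert's theorem \cite[Theorem~6.4]{Boh2024preprint} realizes every $(i, b)$ with $2 \le b \le 2i + 7$ by a half-integral polygonal PIP, and \cite{McAMor2017} realizes $b = 1$. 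Since $2i + 7 < 5i + 4$ for $i \ge 2$, it remains only to construct, for every integer $i \ge 2$ and every integer $b$ with $2i + 8 \le b \le 5i + 4$, a rational polygonal PIP $P$ with $\intp{P} = i$ and $\bndp{P} = b$.

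For this I would exhibit an explicit two-parameter family of convex rational polygons $P_{i,b}$, one for each such pair, all having their vertices in $(\tfrac{1}{2}\Z^{2}) \cup (\tfrac{1}{5}\Z^{2})$ --- hence denominator dividing $10$ --- presented by explicit vertex coordinates or, equivalently, by their primitive outer edge normals together with integer lattice distances, in the spirit of \cref{eqns: outer normals}. These are ``thin'' polygons: a long edge lying on an integral line carries most of the $b$ boundary lattice points, while the two parameters govern, roughly, the length of that edge and the height of the polygon. The prototype is the thin denominator-$2$ triangle $\conv\setof{(-\tfrac{3}{2}, \tfrac{1}{2}),\, (0, -1),\, (6, -1)}$ from \cref{exm: Fibonacci PRPs}, which realizes $(1, 9) = (1, 5 \cdot 1 + 4)$; the family above is obtained by systematically ``thickening'' such polygons, with $\tfrac{1}{5}$-integral vertices responsible for pushing the boundary count all the way up to $5i + 4$.

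To prove that $P := P_{i,b}$ is a PIP with $\intp{P} = i$ and $\bndp{P} = b$, I would apply \cref{thm: PIP criteria}: it is enough to verify the Ehrhart identity $\ehr{P}(t) = \bigparens{i + \tfrac{1}{2}b - 1}t^{2} + \tfrac{1}{2}b\, t + 1$. I would establish this just as in the proof of \cref{thm: Infinitely many PRPs}: decompose $P$ into partially-open triangles, one over each edge, with common apex a fixed interior lattice point; shear each of these triangles into a standard position by an integer matrix, which is a lattice automorphism --- hence preserves Ehrhart quasipolynomials --- precisely because the vertex coordinates of $P$ satisfy the requisite divisibility relations; and check that the sheared triangles reassemble into a single integral, partially-open polygon $B_{i,b}$ whose Ehrhart polynomial is visibly $\bigparens{i + \tfrac{1}{2}b - 1}t^{2} + \tfrac{1}{2}b\, t + 1$.

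The hard part is the design of the family: one must choose the edge-normal (equivalently, vertex) data so that (a) the triangle over each edge can be sheared into the common integral model by an \emph{integer} matrix --- it is this constraint that dictates which denominators occur, here $2$ and $5$ --- and (b) the sheared triangles tile an integral partially-open polygon with exactly the target Ehrhart polynomial, simultaneously for every $(i, b)$ in the range $2i + 8 \le b \le 5i + 4$; arranging the parametrization so that $b$ sweeps out this entire range without gaps is the delicate point. Parts (a) and (b) are the same kind of bookkeeping as in the triangular case of \cref{thm: Infinitely many PRPs}, only with more edges and an extra parameter. As a fallback for the ``no gaps'' issue, one could instead build a single extremal PIP with $(\intp{P}, \bndp{P}) = (i, 5i + 4)$ for each $i$ and then descend to every smaller $b$ by cutting off unimodular lattice corners: if $P$ is a polygonal PIP and $P'$ is obtained by removing a unimodular lattice triangle $R$ that meets $P'$ along a shared edge and leaves $P'$ convex, then (since such an $R$ has lattice length $1$ on all of its edges) $\ehr{P'}(t) = \ehr{P}(t) - \ehr{R}(t) + (t + 1)$ is again a polynomial, so $P'$ is a PIP with $\intp{P'} = \intp{P}$ and $\bndp{P'} = \bndp{P} - 1$; iterating lowers $b$ one step at a time. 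This fallback, however, requires the extremal polygons to have lattice-point vertices at the corners being cut, which must itself be engineered.
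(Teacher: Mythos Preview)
Your overall strategy---build an explicit family of denominator-$10$ polygons and certify pseudointegrality by slicing into partially-open pieces, shearing each by a lattice automorphism, and reassembling into an integral region---is exactly the paper's approach. But what you have written is a plan, not a proof: the entire content of the theorem is the explicit family together with the verification that the shears are integral and the pieces reassemble correctly, and you have supplied neither. You say ``I would exhibit an explicit two-parameter family'' and correctly identify that ``the hard part is the design of the family''; that design is precisely what is missing. Without concrete vertex coordinates and concrete shearing matrices there is nothing to check, and your fallback (cutting unimodular corners from an extremal $(i, 5i+4)$ example) only relocates the gap, since you still need to produce that extremal example and arrange for it to have lattice vertices where you want to cut.

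For comparison, the paper's execution differs from your outline in two ways. First, it does not reduce via Scott and Bohnert to the range $2i+8 \le b \le 5i+4$; it simply writes down a single pentagon
\[
P^{10}_{i,b} \;=\; \conv\bigl\{(i,0),\ (0,\tfrac{1}{5}),\ (-\tfrac{3}{2},-1),\ (b-3,-1),\ (i+\tfrac{4}{5}(b-4),-\tfrac{4}{5})\bigr\}
\]
that works uniformly for every $2 \le b \le 5i+4$ (with $b=1$ handled by \cref{exm: PIPs with 1 or 2 boundary points}). Second, the decomposition is \emph{not} the fan of triangles over edges from an interior lattice apex that you propose, imported from the proof of \cref{thm: Infinitely many PRPs}. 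Instead the paper chooses five partially-open triangles $A,B,C,D,E$ so that $A \cup B = \conv\{(i,0),(i,-1),(1,-1)\}$ and $C \cup D \cup E = \conv\{(0,0),(1,0),(-4,1)\}$ are already \emph{integral} triangles before any shearing; the shears then rearrange these pieces and a residual integral quadrilateral. It is not clear that your fan-decomposition would yield integer shearing matrices on every edge of a pentagon, so even as a plan the paper's decomposition is doing real work that your outline does not yet address.
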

    
The $b = 1$ case of \cref{thm: third main result - restated} follows
from the constructions in \cref{exm: PIPs with 1 or 2 boundary
points}.  For the $b \ge 2$ cases, \cref{thm: third main result -
restated} is a corollary of the following more-precise result.

\begin{theorem}\label{thm: constructions of PIPs with fixed denominators}\mbox{}
    Let $i$ and $b$ be positive integers.
    \begin{itemize}
        \item  
        Let $d \deftobe 3$.  If $2 \le b \le 3i + 5$, then there
        exists a denominator-$d$ polygonal PIP $P$ such that $\intp{P}
        = i$ and $\bndp{P} = b$.
    
        \item  
        Let $d \deftobe 4$.  If $2 \le b \le 4i + 4$, then there
        exists a denominator-$d$ polygonal PIP $P$ such that $\intp{P}
        = i$ and $\bndp{P} = b$.
    
        \item  
        Let $d \deftobe 10$.  If $2 \le b \le 5i + 4$, then there
        exists a denominator-$d$ polygonal PIP $P$ such that $\intp{P}
        = i$ and $\bndp{P} = b$.
    \end{itemize}
\end{theorem}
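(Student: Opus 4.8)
The plan is to prove \cref{thm: constructions of PIPs with fixed denominators} by exhibiting, for each $d \in \{3, 4, 10\}$, an explicit family of denominator-$d$ polygons realizing the required pairs $(\intp{P}, \bndp{P}) = (i, b)$, and to verify pseudointegrality in each case through \cref{thm: PIP criteria}: it is enough to check that $\bndp{tP} = t\,\bndp{P}$ and that $tP$ satisfies Pick's identity for every $t \in \Zp$. Rather than recomputing $\ehr{tP}$ ab initio, I would use the dissection technique from the proof of \cref{thm: Infinitely many PRPs}: dissect $tP$ (or a $\GL_2(\Z)$-image of $tP$ obtained by shearing) into finitely many integral polytopes glued along integral faces, so that $\ehr{tP}$ is visibly polynomial in $t$ by \cref{thm: Ehrhart's theorem for integral polytopes} and inclusion--exclusion. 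The basic gluing principle is that, if two polygons $P_1, P_2$ meet in a common edge $E$, then $\ehr{P_1 \cup P_2}(t) = \ehr{P_1}(t) + \ehr{P_2}(t) - \ehr{E}(t)$, which is a polynomial as soon as $P_1$ and $P_2$ are PIPs and $E$ is an \emph{integral} segment (so that $\ehr{E}(t) = \llength{E}\, t + 1$); when $P_1 \cup P_2$ is convex one also reads off directly that $\bndp{P_1 \cup P_2} = \bndp{P_1} + \bndp{P_2} - 2\llength{E}$ and $\intp{P_1 \cup P_2} = \intp{P_1} + \intp{P_2} + \llength{E} - 1$.

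The first reduction is to construct, for each $d$, a single \emph{extremal} denominator-$d$ polygonal PIP $P^{\max}_{d,i}$ with $\intp{P^{\max}_{d,i}} = i$ and $\bndp{P^{\max}_{d,i}}$ equal to the top of the allowed range ($3i + 5$ for $d = 3$, $4i + 4$ for $d = 4$, $5i + 4$ for $d = 10$), arranged so that $P^{\max}_{d,i}$ is convex with one long integral edge lying on a lattice line and with enough unimodular corners. Every pair with the same $i$ and smaller $b$ is then obtained by repeated \emph{trimming}: excising from a polygon a unimodular lattice triangle at a vertex $v$ (with the two primitive edge vectors at $v$ forming a lattice basis), which changes the Ehrhart function by subtracting $\tfrac12 t^2 + \tfrac12 t$ (so that the PIP property is preserved) and changes the counts by $(\intp{}, \bndp{}) \mapsto (\intp{}, \bndp{} - 1)$, since only the vertex $v$ is removed from the lattice-point set. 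Iterating, one sweeps $\bndp{}$ down through the whole interval $2 \le b \le \ell_d(i)$; the endpoints $b = 2, 3$ can alternatively be covered by (trimmed or dilated variants of) the families in \cref{exm: PIPs with 1 or 2 boundary points} if the supply of unimodular corners is exhausted first. Since the $b = 1$ case of \cref{thm: third main result - restated} is already \cref{exm: PIPs with 1 or 2 boundary points}, the entire content is concentrated in the extremal families.

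For the extremal families I would look for thin, ``spiky'' convex polygons hugging a long integral segment, so that many lattice points accumulate along the outer boundary while only $i$ of them lie in the interior --- this is the mechanism that pushes the boundary-to-interior ratio past Scott's value $2$ (\cref{thm: Scott's inequality}). For $d = 3$ and $d = 4$ I expect a single non-integral sliver (a denominator-$3$, respectively denominator-$4$, analogue of the reflexive/Fibonacci triangles of \cref{exm: Fibonacci PRPs} and of Bohnert's half-integral polygons) to realize slope $3$, respectively $4$, in the relation $b \approx d \cdot i$. For $d = 10$ the target slope is $5$; here I would take $P^{\max}_{10,i}$ to be the union along a common long integral segment of a denominator-$2$ sliver (in the spirit of Bohnert's construction) contributing asymptotically $2i$ boundary points and a denominator-$5$ sliver contributing $3i$, so that by the gluing formula above the union has denominator $\lcm(2,5) = 10$ and $\bndp{} \approx 5i$. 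In all cases the exact values of $\intp{}$ and $\bndp{}$ are then a routine (if fiddly) count, using \cref{prop: i A and b for triangular PIPs} for the triangular pieces.

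The main obstacle is the verification that the extremal slivers really are PIPs: equivalently, that the non-integral dilates $tP$ (those with $\den(P) \nmid t$) have \emph{exactly} the lattice-point count forced by the polynomial $\area{P}\,t^2 + \tfrac12 \bndp{P}\, t + 1$, with no point lost from the spiky part of the boundary. This is where the denominators and coordinates must be chosen precisely, and where the ``dissection into integral pieces'' argument of \cref{thm: Infinitely many PRPs} must be carried out for each family after a suitable shear. Checking convexity of the slivers and of the final polygons, and the step-by-step effect of each trim on the vertex set, also requires care but is elementary. Once these verifications are complete, \cref{thm: third main result - restated} follows from \cref{thm: constructions of PIPs with fixed denominators} together with the $b = 1$ case noted above, and the claimed lower bounds on the leading coefficients of $\ell_3, \ell_4, \ell_{10}$ are immediate from the extremal families.
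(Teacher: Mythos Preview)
Your outline shares the paper's core device for verifying pseudointegrality---dissect the polygon, apply lattice automorphisms to the pieces, and reassemble into something whose Ehrhart function is visibly polynomial---so that part of the plan is on target. Where you diverge is in how the non-extremal values of $b$ are reached: the paper does \emph{not} build only the extremal polygon and trim. Instead it writes down a single parametrized family $P^{d}_{i,b}$ for every admissible $(i,b)$ at once, and the parameter $b$ enters the dissection proof as the shear $\begin{bsmallmatrix}1 & c-b\\ 0 & 1\end{bsmallmatrix}$ applied to one of the pieces. Thus one decomposition argument handles the whole range simultaneously.

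Your trimming reduction has a concrete obstacle. For the trim $\conv\{v,\,v+e_{1},\,v+e_{2}\}$ to lie inside $P$, the lattice vertex $v$ must have a unimodular corner \emph{and} both incident edges must have lattice length $\ge 1$; otherwise $v+e_{j}$ falls outside $P$ and the cut destroys convexity. But the extremal slivers are necessarily thin, and in the paper's explicit $P^{3}_{i,3i+5}$ the only unimodular lattice vertex is $(3i+1,-1)$, where one adjacent edge (toward $(3i,-\tfrac{2}{3})$) has lattice length $\tfrac{1}{3}$; the other lattice vertex $(-2,-1)$ has corner determinant $2$. The same phenomenon occurs for $d=4$ and $d=10$: the unimodular lattice corners sit next to edges of lattice length $\tfrac{1}{4}$ or $\tfrac{1}{5}$. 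So even the first trim fails, and you would have to exhibit an entirely different extremal family with long edges at a unimodular lattice corner---which you have not done, and which is not obviously possible given that the short edge is precisely what makes the sliver thin enough to push $\bndp{}/\intp{}$ past Scott's ratio. The ``denominator-$2$ plus denominator-$5$'' picture for $d=10$ is also only heuristic; the paper's $P^{10}_{i,b}$ has a single half-integral vertex and does not decompose cleanly in that way. The substantive work, then, is exactly the explicit construction you defer, and the paper's parametrized family is what fills that gap.
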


\subsection*{Definition of \texorpdfstring{$\PIP{d}{i}{b}$}{the polygon}}

Before proving \cref{thm: constructions of PIPs with fixed
denominators}, we first define a convex denom\-inator-$d$ polygon
$\PIP{d}{i}{b}$ containing $i$ interior lattice points and $b$
boundary lattice points for all values of $d$, $i$, and $b$ such that
$d \in \setof{3,4,10}$ and $i$ and $b$ satisfy the corresponding
inequality in \cref{thm: constructions of PIPs with fixed
denominators}.  This theorem will then be proved once we show that the
polygon $\PIP{d}{i}{b}$ is in fact a PIP. The fact that
$\PIP{d}{i}{b}$ is a PIP appears as \cref{lem: the constructions are
PIPs} below.

We first say a few words about the limited range of the denominators
$d$ that we consider.  Roughly speaking, our polygons have a ``right
half'' that is constructed in a uniform way for all $d \in
\setof{3,4,10}$.  This half takes inspiration from Bohnert's
half-integral polygons \cite{Boh2024preprint}.  There is a clear and
natural way to generalize this right half to arbitrary denominators.
However, the ``left half'' of our polygons are \emph{ad hoc} in
certain respects.  We do not know how to generalize this half to
larger denominators.

\subsubsection*{The denominator-\texorpdfstring{$3$}{3} polygon}

\begin{sloppypar}
    Let positive $i, b \in \Z$ be such that ${2 \le b \le 3i + 5}$.  We
    define a convex denominator\nobreakdash-$3$ polygon $\PIP{3}{i}{b}$
    with $i$ interior lattice points and~$b$~boundary lattice points as
    follows:
    \begin{equation*}
        \PIP{3}{i}{b}
            \deftobe
            \conv
            \setof{%
                (i, 0),\,
                (0, \tfrac{1}{3}),\,
                (-2,-1),\,
                (b-4, -1),\,
                \bigparens{i + \tfrac{2}{3}(b-5) , -\tfrac{2}{3}}
            }.%
    \end{equation*}
    When $i = 1$, the point $\bigparens{i + \tfrac{2}{3}(b-5) ,
    -\tfrac{2}{3}}$ is not a vertex.  For all $i \ge 1$, the point $(i,
    0)$ is not a vertex in the extremal case $b = 3i + 5$.
    When $i \ge 3$, new values of $(i,b)$ are attained by $\PIP{3}{i}{b}$
    that are not realized by integral or half-integral PIPs.  Here is the
    $i = 3$, $b = 14$ case:
\end{sloppypar}

\begin{center}
    \newcommand{\intpvalue}{3}
    \newcommand{\bndpvalue}{14}
    \begin{tikzpicture}
        \drawlatticegrid{-2}{10}{-1}{1}{3}
        \drawfivegon{(\intpvalue,0)}{(0,1/3)}{(-2,-1)}{(\bndpvalue-4,-1)}{(\intpvalue+\bndpvalue*2/3-5*2/3,-2/3)}
    \end{tikzpicture}
\end{center}

\subsubsection*{The denominator-\texorpdfstring{$4$}{4} polygon} 

\begin{sloppypar}
    Let positive $i, b \in \Z$ be such that ${2 \le b \le 4i + 4}$.  We
    define a convex denominator\nobreakdash-$4$ polygon $\PIP{4}{i}{b}$
    with $i$ interior lattice points and~$b$~boundary lattice points as
    follows:
    \begin{equation*}
        \PIP{4}{i}{b}
            \deftobe
            \conv
            \setof{%
                (i, 0),\,
                (0, \tfrac{1}{4}),\,
                (-1, -\tfrac{1}{2}),\,
                (-1,-1),\,
                (b-3, -1),\,
                \bigparens{i + \tfrac{3}{4}(b-4) , -\tfrac{3}{4}}
            }.%
        \end{equation*}
    When $i = 1$, the point $\bigparens{i + \tfrac{3}{4}(b-4) ,
    -\tfrac{3}{4}}$ is not a vertex.  For all $i \ge 1$, the point $(i,
    0)$ is not a vertex in the extremal case $b = 4i + 4$.  When $i \ge
    2$, new values of $(i,b)$ are attained by $\PIP{4}{i}{b}$ that are not
    realized by known PIPs of denominator $\le 3$.  Here is the $i = 2$,
    $b = 12$ case (which is extremal):
\end{sloppypar}

\begin{center}
    \newcommand{\intpvalue}{2}
    \newcommand{\bndpvalue}{12}
    \begin{tikzpicture}
        \drawlatticegrid{-1}{9}{-1}{1}{4}
        \drawsixgon{(\intpvalue,0)}{(0,1/4)}{(-1,-1/2)}{(-1,-1)}{(\bndpvalue-3,-1)}{(\intpvalue+3/4*\bndpvalue-3/4*4,-3/4)}
    \end{tikzpicture}
\end{center}

\subsubsection*{The denominator-\texorpdfstring{$10$}{10} polygon}
Let positive $i, b \in \Z$ be such that ${2 \le b \le 5i + 4}$.  We
define a convex denominator\nobreakdash-$10$ polygon $\PIP{10}{i}{b}$
with $i$ interior lattice points and~$b$~boundary lattice points as
follows: 
\begin{equation*}
    \PIP{10}{i}{b}
        \deftobe
        \conv
        \setof{%
            (i, 0),\,
            (0, \tfrac{1}{5}),\,
            (-\tfrac{3}{2}, -1),\,
            (b-3, -1),\,
            \bigparens{i + \tfrac{4}{5}(b-4), -\tfrac{4}{5}}
        }.%
\end{equation*}
When $i = 1$, the point $\bigparens{i + \tfrac{4}{5}(b-4),
-\tfrac{4}{5}}$ is not a vertex.  For all $i \ge 1$, the point $(i,
0)$ is not a vertex in the extremal case $b = 5i + 4$.  Here is the $i
= 2$, $b = 14$ case (which is extremal):

\begin{center}
    \newcommand{\intpvalue}{2}
    \newcommand{\bndpvalue}{14}
    \resizebox{\textwidth}{!}{
        \begin{tikzpicture}
            \drawlatticegrid{-2}{11}{-1}{1}{5}
            \drawfivegon{(\intpvalue,0)}{(0,1/5)}{(-3/2,-1)}{(\bndpvalue-3,-1)}{(\intpvalue+4/5*\bndpvalue-4/5*4,-4/5)}
        \end{tikzpicture}
    }
\end{center}

\subsection*{Proof that \texorpdfstring{$\PIP{d}{i}{b}$}{the defined polygon} is a PIP} 
Let $d \in \setof{3,4,10}$.  It is straightforward to confirm that the
polygon $\PIP{d}{i}{b}$ defined above has denominator $d$ and contains
$i$ interior lattice points and $b$ boundary lattice points, provided
that $i$ and $b$ satisfy the corresponding inequalities.  In order to
prove \cref{thm: constructions of PIPs with fixed denominators} (and
hence \cref{thm: third main result - restated}), it remains only to
show that $\PIP{d}{i}{b}$ is in fact a PIP. We complete this last step
in the proof of \cref{thm: constructions of PIPs with fixed
denominators} with the following lemma.

\begin{lemma}\label{lem: the constructions are PIPs} %
    Let $d \in \setof{3,4,10}$, and let $(i, b) \in \Zp^{2}$ be such 
    that
    \begin{equation*}
        \begin{dcases*}
            2 \le b \le 3i+5 
                & if $d = 3$, \\
            2 \le b \le 4i+4
                & if $d = 4$, \\
            2 \le b \le 5i+4
                & if $d = 10$.
        \end{dcases*}
    \end{equation*}
    Then the polygon $\PIP{d}{i}{b}$ defined above is a polygonal PIP.
\end{lemma}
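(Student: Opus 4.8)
The plan is to show directly that $\ehr{\PIP{d}{i}{b}}$ is a polynomial, using the dissection technique from the proof of \cref{thm: Infinitely many PRPs}. Concretely, I would cut $\PIP{d}{i}{b}$ into finitely many partially-open polygonal pieces, apply to each piece a lattice automorphism composed with a lattice translation, and check that the images tile, with no overlaps and no gaps, a single partially-open \emph{integral} polygon $Q_{d,i,b}$. Since such an affine map preserves the Ehrhart quasipolynomial of any subset of $\R^{2}$, and since the Ehrhart function of a partially-open integral polygon is a polynomial — it is an inclusion--exclusion alternating sum of the Ehrhart polynomials of closed integral polytopes (the closure together with the faces removed from it), each of which is a polynomial by \cref{thm: Ehrhart's theorem for integral polytopes} — we would conclude that $\ehr{\PIP{d}{i}{b}} = \ehr{Q_{d,i,b}}$ is a polynomial. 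Together with the already-noted facts that $\PIP{d}{i}{b}$ has denominator $d$, exactly $i$ interior lattice points, and exactly $b$ boundary lattice points, this proves \cref{thm: constructions of PIPs with fixed denominators}, and hence, with the $b = 1$ cases supplied by \cref{exm: PIPs with 1 or 2 boundary points}, also \cref{thm: third main result - restated}.

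For the dissection I would cone $\PIP{d}{i}{b}$ from the lattice vertex $(i,0)$, producing a fan triangulation into partially-open triangles. (In the extremal case where $b$ attains its maximum allowed value, $(i,0)$ is not a vertex, and one instead cones from the lattice point lying on the corresponding edge; when $i = 1$ one triangle of the fan degenerates and is simply dropped.) Most triangles in this fan already have all three vertices in $\Z^{2}$ — for example, when $d = 3$ the middle triangle $\conv\setof{(i,0), (-2,-1), (b-4,-1)}$ is integral — so these already contribute polynomials. The remaining triangles are those incident to a non-lattice vertex of $\PIP{d}{i}{b}$: the ``apex'' on the $y$-axis ($(0,\tfrac13)$, $(0,\tfrac14)$, or $(0,\tfrac15)$ according as $d = 3, 4, 10$); the rational vertex on the right at height $-\tfrac23$, $-\tfrac34$, or $-\tfrac45$ (the Bohnert-inspired part of the construction); and, when $d = 4$ or $d = 10$, one extra non-lattice vertex on the left. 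For the right-hand triangle, a single integral affine shear (or a short composition thereof) — fixing an appropriate edge and moving the rational vertex onto $\Z^{2}$ — carries it onto a partially-open integral triangle; the coordinates in the definition of $\PIP{d}{i}{b}$ are arranged precisely so that this shear has integer entries, just as the shears $U_{23}$ and $U_{31}$ in the proof of \cref{thm: Infinitely many PRPs} are integral thanks to the divisibility statements in \cref{lem: Infinitely many solutions}.

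The left-hand triangles are where the construction is \emph{ad hoc}, and supplying the correct shears for them is the main obstacle. For each $d \in \setof{3,4,10}$ separately one must exhibit an explicit integral affine shear, or a short composition of such shears, carrying the left-hand triangle(s) — whose vertices are drawn from $(i,0)$, the $y$-axis apex, and the lower-left vertices ($(-2,-1)$ for $d = 3$; $(-1,-\tfrac12)$ and $(-1,-1)$ for $d = 4$; $(-\tfrac32,-1)$ for $d = 10$) — onto partially-open integral triangles abutting the (already straightened) images of the middle and right triangles. The bookkeeping genuinely differs across the three cases: for $d = 4$ there are two left edges to straighten because of the extra vertex $(-1,-\tfrac12)$, and for $d = 10$ the lower-left vertex $(-\tfrac32,-1)$ is itself non-integral, so one first applies a shear making it a lattice point. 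Once all the shears are written down, the verification that their images partition a single partially-open integral polygon $Q_{d,i,b}$ is a finite, if tedious, computation. A useful consistency check throughout is that unimodular maps and the removal of lower-dimensional faces preserve area, so $Q_{d,i,b}$ must have area $i + \tfrac{b}{2} - 1$, in agreement with the Ehrhart polynomial $\bigparens{i + \tfrac{b}{2} - 1}t^{2} + \tfrac{b}{2}t + 1$ forced by \cref{thm: PIP criteria}. This completes the proof.
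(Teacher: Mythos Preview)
Your overall strategy --- dissect, apply a lattice-affine map to each piece, and show that the images tile a partially-open integral polygon --- is precisely the paper's approach.  But there is a genuine gap in your execution: you write that an integral affine shear ``mov[es] the rational vertex onto $\Z^{2}$'', and this is impossible.  Every element of $\GL_{2}(\Z) \ltimes \Z^{2}$ restricts to a bijection of $\Z^{2}$, hence also of $\R^{2} \setminus \Z^{2}$; a non-lattice point can never be sent to a lattice point by such a map.  Consequently no triangle with a non-lattice vertex is carried to an integral triangle, and your fan from $(i,0)$ --- in which each non-lattice vertex of $\PIP{d}{i}{b}$ belongs to exactly one triangle --- cannot work as described.  (The shears $U_{23}$, $U_{31}$ in the proof of \cref{thm: Infinitely many PRPs} do not make the individual pieces $T_{1}', T_{2}'$ integral either; only the final union $B$ is integral.)

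The paper's dissection is not a fan from a vertex of $\PIP{d}{i}{b}$ and runs in the opposite direction.  For $d=3$ it begins with two disjoint \emph{lattice} triangles, $\conv\setof{(i,0),(i,-1),(1,-1)}$ and $\conv\setof{(0,0),(1,0),(-2,1)}$, and cuts each along a segment through a carefully chosen non-lattice point (respectively $(i,-\tfrac{2}{3})$ and $(0,\tfrac{1}{3})$), yielding four partially-open pieces $A,B,C,D$.  Because these cuts are interior to integral triangles, the non-lattice points are \emph{not vertices} of $A\cup B$ or of $C\cup D$, so the Ehrhart function of the disjoint union is already a polynomial.  One then applies a \emph{different} lattice automorphism to each of the four pieces so that the images $A',B'',C,D'$, together with a central lattice quadrilateral, tile $\PIP{3}{i}{b}$; the non-lattice vertices of $\PIP{3}{i}{b}$ appear as images of those internal cut points under the various maps (for instance $(i,-\tfrac{2}{3})$ in $B$ is translated and sheared to $(0,\tfrac{1}{3})$ in $B''$).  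Thus the non-lattice vertices are never straightened individually; they are absorbed by pairing pieces whose preimages share a non-lattice cut inside an integral triangle.  The $d=4$ and $d=10$ cases use the same idea with one additional piece.
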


\begin{proof}
    For the $d = 3$ case, let
    \begin{align*}
        A
            & \deftobe
            \conv\setof{(i,0), (i, -\tfrac{2}{3}), (1, -1)}, \\
        B
            & \deftobe
            \conv\setof{(1, -1), (i, -\tfrac{2}{3}), (i, -1)}
            \setminus
            \conv\setof{(1, -1), (i, -\tfrac{2}{3})}, \\
        C 
            & \deftobe
            \conv\setof{(0,0), (1,0), (0, \tfrac{1}{3})}, \\
        D
            & \deftobe
            \conv\setof{(0,0), (0, \tfrac{1}{3}), (-2, 1)}
            \setminus 
            \conv\setof{(0,0), (0, \tfrac{1}{3})}.
    \end{align*}
    (Observe that, in the $i = 1$ case, $A$ is the line segment
    $\conv\setof{(1,0), (1, -1)}$, and $B = \emptyset$.) As a running 
    example, here is the $i = 3$ case:
    \begin{center}
        \newcommand{\intpvalue}{3}
        \begin{tikzpicture}
            \drawlatticegrid{-2}{4}{-1}{1}{3}
            \drawtriangle{(\intpvalue,0)}{(\intpvalue,-2/3)}{(1,-1)}
            \drawtriangle{(1,-1)}{(\intpvalue,-2/3)}{(\intpvalue,-1)}
            \drawtriangle{(0,0)}{(1,0)}{(0,1/3)}
            \drawtriangle{(0,0)}{(0,1/3)}{(-2, 1)}
        \end{tikzpicture}
    \end{center}
    Thus, $(A \cup B) \sqcup (C \cup D)$ is a disjoint union of
    \emph{lattice} polygons (or of a lattice polygon and a lattice
    line segment, in the $i = 1$ case), so the Ehrhart function of
    this union is a polynomial.  We will now apply affine lattice
    automorphisms to the pairwise disjoint partially-open polygons $A,
    B, C, D$ in such a way that the images of these partially-open
    polygons under their respective affine lattice automorphisms are
    still pairwise disjoint.  It will then follow that the union of
    these images has the same polynomial Ehrhart function as $(A \cup
    B) \sqcup (C \cup D)$ has.
    
    To this end, let $D' \deftobe \rinlinematrix{1}{0}{1}{1} D$, so
    that $D'$ is the result of shearing $D$ parallel to the $y$-axis
    by a lattice automorphism, and let $B' \deftobe B + (0,1)$, so
    that $B'$ is a lattice translate of~$B$.
    \begin{center}
        \newcommand{\intpvalue}{3}
        \begin{tikzpicture}
            \drawlatticegrid{-2}{4}{-1}{1}{3}
            \drawtriangle{(\intpvalue,0)}{(\intpvalue,-2/3)}{(1,-1)}
            \fill[fill=black!25]
                (1,0) -- (\intpvalue,1/3) -- (\intpvalue,0) -- cycle;
            \draw [thick, dashed] (1,0) -- (\intpvalue,1/3);
            \draw [thick] (\intpvalue,1/3) -- (\intpvalue,0);
            \draw [thick] (\intpvalue,0) -- (1,0);
            \drawtriangle{(0,0)}{(1,0)}{(0,1/3)}
            \drawtriangle{(0,0)}{(0,1/3)}{(-2, -1)}
        \end{tikzpicture}
    \end{center}
    Finally, let $B''$ and $A'$ be the respective results of shearing
    $B'$ and $A$ parallel to the $x$-axis as follows: $B'' \deftobe
    \cinlinematrix{1}{-3i}{0}{1} B'$ and $A' \deftobe
    \cinlinematrix{1}{5-b}{0}{1} A$, where, by hypothesis, ${-3i \le 5
    - b \le 3}$.  (For the running example, we now show the $i = 3$,
    $b = 6$ case.)
    \begin{center}
        \newcommand{\intpvalue}{3}
        \begin{tikzpicture}
            \drawlatticegrid{-2}{4}{-1}{1}{3}
            \drawtriangle{(\intpvalue,0)}{(\intpvalue+2/3, -2/3)}{(2,-1)}
            \drawtriangle{(1,0)}{(0,1/3)}{(\intpvalue,0)}
            \drawtriangle{(0,0)}{(1,0)}{(0,1/3)}
            \drawtriangle{(0,0)}{(0,1/3)}{(-2, -1)}
        \end{tikzpicture}
    \end{center}
    (Note that the inequality $5-b \le 3$ ensures that the image of
    $(1,-1)$ in $A'$ does not pass to the left of the point $(-2, -1)$
    in $D'$, while the inequality $-3i \le 5- b$ ensures that the
    image of the segment $\conv\setof{(i,-\tfrac{2}{3}), (i,
    \tfrac{1}{3})}$ is convex about $0$ after the action of the
    piecewise linear map $x \mapsto \cinlinematrix{1}{-3i}{0}{1} x$
    for $x \in B'$, $x \mapsto \cinlinematrix{1}{5-b}{0}{1} x$ for $x
    \in A$.)  Thus, the Ehrhart function of $A' \cup B'' \cup C \cup
    D'$ is a polynomial.  Now, $\PIP{3}{i}{b}$ is the union of $A'
    \cup B'' \cup C \cup D'$ and the lattice polygon
    $\conv\setof{(0,0), (i, 0), (b-4, -1), (-2, -1)}$.
    \begin{center}
        \newcommand{\intpvalue}{3}
        \begin{tikzpicture}
            \drawlatticegrid{-2}{4}{-1}{1}{3}
            \drawtriangle{(\intpvalue,0)}{(\intpvalue+2/3, -2/3)}{(2,-1)}
            \drawtriangle{(1,0)}{(0,1/3)}{(\intpvalue,0)}
            \drawtriangle{(0,0)}{(1,0)}{(0,1/3)}
            \drawtriangle{(0,0)}{(0,1/3)}{(-2, -1)}
            \drawfourgon{(0,0)}{(3,0)}{(2,-1)}{(-2,-1)}
        \end{tikzpicture}
    \end{center}
    Moreover, the intersection of these two regions is a lattice
    polygonal path.  Therefore, the Ehrhart function of
    $\PIP{3}{i}{b}$ is a polynomial, as claimed.
        
    We proceed similarly for the $d = 4$ and $d = 10$ cases.  Our
    running examples will again show the corresponding $i = 3$, $b =
    6$ cases.
    
    For the $d = 4$ case, let
    \begin{align*}
        A
            & \deftobe
            \conv\setof{(i,0), (i, -\tfrac{3}{4}), (1, -1)}, \\
        B
            & \deftobe
            \conv\setof{(1, -1), (i, -\tfrac{3}{4}), (i, -1)}
            \setminus
            \conv\setof{(1, -1), (i, -\tfrac{3}{4})}, \\
        C 
            & \deftobe
            \conv\setof{(0,0), (1,0), (0, \tfrac{1}{4})}, \\
        D
            & \deftobe
            \conv\setof{(0,0), (0, \tfrac{1}{4}), (-1, \tfrac{1}{2})}
            \setminus 
            \conv\setof{(0,0), (0, \tfrac{1}{4})}, \\
        E
            & \deftobe
            \conv\setof{(0,0), (-1, \tfrac{1}{2}), (-3, 1)}
            \setminus 
            \conv\setof{(0,0), (-1, \tfrac{1}{2})}.
    \end{align*}
    \begin{center}
        \newcommand{\intpvalue}{3}
        \begin{tikzpicture}
            \drawlatticegrid{-3}{5}{-1}{1}{4}
            \drawtriangle{(\intpvalue,0)}{(\intpvalue,-3/4)}{(1,-1)}
            \drawtriangle{(1,-1)}{(\intpvalue,-3/4)}{(\intpvalue,-1)}
            \drawtriangle{(0,0)}{(1,0)}{(0,1/4)}
            \drawtriangle{(0,0)}{(0,1/4)}{(-1,1/2)}
            \drawtriangle{(0,0)}{(-1,1/2)}{(-3,1)}
        \end{tikzpicture}
    \end{center}
    Let $E' \deftobe \rinlinematrix{-1}{-4}{1}{3}E$ and let $B'
    \deftobe B + (0,1)$.
    \begin{center}
        \newcommand{\intpvalue}{3}
        \begin{tikzpicture}
            \drawlatticegrid{-3}{5}{-1}{1}{4}
            \drawtriangle{(\intpvalue,0)}{(\intpvalue,-3/4)}{(1,-1)}
            \fill[fill=black!25]
                (1,0) -- (\intpvalue,1/4) -- (\intpvalue,0) -- cycle;
            \draw [thick, dashed] (1,0) -- (\intpvalue,1/4);
            \draw [thick] (\intpvalue,1/4) -- (\intpvalue,0);
            \draw [thick] (\intpvalue,0) -- (1,0);
            \drawtriangle{(0,0)}{(1,0)}{(0,1/4)}
            \drawtriangle{(0,0)}{(0,1/4)}{(-1,1/2)}
            \drawtriangle{(0,0)}{(-1,1/2)}{(-1,0)}
        \end{tikzpicture}
    \end{center}
    Let $F \deftobe \rinlinematrix{1}{0}{1}{1}\parens{D \cup E'}$.
    \begin{center}
        \newcommand{\intpvalue}{3}
        \begin{tikzpicture}
            \drawlatticegrid{-3}{5}{-1}{1}{4}
            \drawtriangle{(\intpvalue,0)}{(\intpvalue,-3/4)}{(1,-1)}
            \fill[fill=black!25]
                (1,0) -- (\intpvalue,1/4) -- (\intpvalue,0) -- cycle;
            \draw [thick, dashed] (1,0) -- (\intpvalue,1/4);
            \draw [thick] (\intpvalue,1/4) -- (\intpvalue,0);
            \draw [thick] (\intpvalue,0) -- (1,0);
            \drawtriangle{(0,0)}{(1,0)}{(0,1/4)}
            \drawtriangle{(0,0)}{(0,1/4)}{(-1,-1/2)}
            \drawtriangle{(0,0)}{(-1,-1/2)}{(-1,-1)}
        \end{tikzpicture}
    \end{center}
    Finally, let $B'' \deftobe \cinlinematrix{1}{-4i}{0}{1} B'$ and
    $A' \deftobe \cinlinematrix{1}{4-b}{0}{1} A$, where, by 
    hypothesis, $-4i \le 4 - b \le 2$.
    \begin{center}
        \newcommand{\intpvalue}{3}
        \begin{tikzpicture}
            \drawlatticegrid{-3}{5}{-1}{1}{4}
            \drawtriangle{(\intpvalue,0)}{(\intpvalue+3/2,-3/4)}{(3,-1)}
            \drawtriangle{(1,0)}{(0,1/4)}{(\intpvalue,0)}
            \drawtriangle{(0,0)}{(1,0)}{(0,1/4)}
            \drawtriangle{(0,0)}{(0,1/4)}{(-1,-1/2)}
            \drawtriangle{(0,0)}{(-1,-1/2)}{(-1,-1)}
        \end{tikzpicture}
    \end{center}
    Then $\PIP{4}{i}{b} = \conv\parens{A' \cup B'' \cup C \cup F}$
    and, as in the previous case, the Ehrhart function of
    $\PIP{4}{i}{b}$ is a polynomial.

    For the $d = 10$ case, let
    \begin{align*}
        A
            & \deftobe
            \conv\setof{(i,0), (i, -\tfrac{4}{5}), (1, -1)}, \\
        B
            & \deftobe
            \conv\setof{(1, -1), (i, -\tfrac{4}{5}), (i, -1)}
            \setminus
            \conv\setof{(1, -1), (i, -\tfrac{4}{5})}, \\
        C 
            & \deftobe
            \conv\setof{(0,0), (1,0), (0, \tfrac{1}{5})}, \\
        D
            & \deftobe
            \conv\setof{(0,0), (0, \tfrac{1}{5}), (-\tfrac{3}{2}, \tfrac{1}{2})}
            \setminus 
            \conv\setof{(0,0), (0, \tfrac{1}{5})}, \\
        E
            & \deftobe
            \conv\setof{(0,0), (-\tfrac{3}{2}, \tfrac{1}{2}), (-4, 1)}
            \setminus 
            \conv\setof{(0,0), (-\tfrac{3}{2}, \tfrac{1}{2})}.
    \end{align*}
    \begin{center}
        \newcommand{\intpvalue}{3}
        \begin{tikzpicture}
            \drawlatticegrid{-4}{5}{-1}{1}{5}
            \drawtriangle{(\intpvalue,0)}{(\intpvalue,-4/5)}{(1,-1)}
            \drawtriangle{(1,-1)}{(\intpvalue,-4/5)}{(\intpvalue,-1)}
            \drawtriangle{(0,0)}{(1,0)}{(0,1/5)}
            \drawtriangle{(0,0)}{(0,1/5)}{(-3/2,1/2)}
            \drawtriangle{(0,0)}{(-3/2,1/2)}{(-4,1)}
        \end{tikzpicture}
    \end{center}
    Let $E' \deftobe \rinlinematrix{-2}{-9}{1}{4}E$ and let $B'
    \deftobe B + (0,1)$.
    \begin{center}
        \newcommand{\intpvalue}{3}
        \begin{tikzpicture}
            \drawlatticegrid{-4}{5}{-1}{1}{5}
            \drawtriangle{(\intpvalue,0)}{(\intpvalue,-4/5)}{(1,-1)}
            \fill[fill=black!25]
                (1,0) -- (\intpvalue,1/5) -- (\intpvalue,0) -- cycle;
            \draw [thick, dashed] (1,0) -- (\intpvalue,1/5);
            \draw [thick] (\intpvalue,1/5) -- (\intpvalue,0);
            \draw [thick] (\intpvalue,0) -- (1,0);
            \drawtriangle{(0,0)}{(1,0)}{(0,1/5)}
            \drawtriangle{(0,0)}{(0,1/5)}{(-3/2,1/2)}
            \drawtriangle{(0,0)}{(-3/2,1/2)}{(-1,0)}
        \end{tikzpicture}
    \end{center}
    Let $F \deftobe \rinlinematrix{1}{0}{1}{1}\parens{D \cup E'}$.
    \begin{center}
        \newcommand{\intpvalue}{3}
        \begin{tikzpicture}
            \drawlatticegrid{-4}{5}{-1}{1}{5}
            \drawtriangle{(\intpvalue,0)}{(\intpvalue,-4/5)}{(1,-1)}
            \fill[fill=black!25]
                (1,0) -- (\intpvalue,1/5) -- (\intpvalue,0) -- cycle;
            \draw [thick, dashed] (1,0) -- (\intpvalue,1/5);
            \draw [thick] (\intpvalue,1/5) -- (\intpvalue,0);
            \draw [thick] (\intpvalue,0) -- (1,0);
            \drawtriangle{(0,0)}{(1,0)}{(0,1/5)}
            \drawtriangle{(0,0)}{(0,1/5)}{(-3/2,-1)}
            \drawtriangle{(0,0)}{(-3/2,-1)}{(-1,-1)}
        \end{tikzpicture}
    \end{center}
    Finally, let $B'' \deftobe \cinlinematrix{1}{-5i}{0}{1} B'$ and
    $A' \deftobe \cinlinematrix{1}{4-b}{0}{1} A$, where, by
    hypothesis, $-5i \le 4-b \le 2$.
    \begin{center}
        \newcommand{\intpvalue}{3}
        \begin{tikzpicture}
            \drawlatticegrid{-4}{5}{-1}{1}{5}
            \drawtriangle{(\intpvalue,0)}{(\intpvalue+8/5,-4/5)}{(3,-1)}
            \drawtriangle{(1,0)}{(0,1/5)}{(\intpvalue,0)}
            \drawtriangle{(0,0)}{(1,0)}{(0,1/5)}
            \drawtriangle{(0,0)}{(0,1/5)}{(-3/2,-1)}
            \drawtriangle{(0,0)}{(-3/2,-1)}{(-1,-1)}
        \end{tikzpicture}
    \end{center}
    As in the previous cases, the Ehrhart function of $\PIP{5}{i}{b} =
    \conv\parens{A' \cup B'' \cup C \cup F}$ is a polynomial.
\end{proof}

\section*{Acknowledgements}

We thank Martin Bohnert for helpful conversations, and Will Jagy for
pointing us to the reference \cite{Hur1963}.

The work of the second author was supported by a grant from the Simons
Foundation (711898, J.S.W.).  Exploratory calculations were done using
the Magma \cite{magma1997} and SageMath \cite{sagemath} computer
algebra systems.

\appendix

\section{A generalization of \texorpdfstring{\cref{lem: number-theory
lemma for triangular PRPs}}{the number-theory lemma}}

\Cref{lem: number-theory lemma for triangular PRPs} is the $n = 3$
case of a generalization that we prove here for all $n \ge 2$.
Unfortunately, for $n \ge 4$, this result does not have the direct
relevance to $n$-gonal PRPs that it had in the $n = 3$ case.  In
particular, this result cannot be used to show that $\bndp{P} \le 9$
for every pseudoreflexive $n$-gon, so far as we know.  Nonetheless,
the result may be of independent interest.

\begin{theorem}\label{thm: number-theory lemma for general n}
    Let $n \ge 2$ be an integer, and let $x_{1}, \dotsc, x_{n} \in
    \Zp$ be such that the ratio
    \begin{equation}\label{eq:1}
        b
        \; \deftobe \;
        \parens{\sum_{i = 1}^{n} x_{i}}^{2}
        \Bigg\slash 
        \prod_{i=1}^{n} x_{i}
    \end{equation}
    is an integer.  Then $b \le n^{2}$.
\end{theorem}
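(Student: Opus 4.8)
The plan is to follow the template of the proof of \cref{lem: number-theory lemma for triangular PRPs - restated}: Vieta-jump down to a ``balanced'' solution and then finish with an elementary estimate. The new ingredient is an estimate that works uniformly in $n$ and needs no case analysis.

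\textbf{Step 1 (Vieta reduction).} I would first observe that the set of tuples $(x_{1}, \dotsc, x_{n}) \in \Zp^{n}$ with $\bigparens{\sum_{i} x_{i}}^{2} = b \prod_{i} x_{i}$ is nonempty (it contains the given tuple) and that each such tuple has coordinate sum at least $n$, so there is a tuple in this set minimizing $\sum_{i} x_{i}$. Fix one and relabel so that $x_{1} \le \dotsb \le x_{n}$, and write $A \deftobe x_{1} + \dotsb + x_{n-1}$ and $P \deftobe x_{1} \dotsm x_{n-1}$. Reading the defining equation as a monic quadratic in $x_{n}$ shows that $x_{n}$ is a root of $t^{2} - (bP - 2A) t + A^{2}$; let $x_{n}'$ be the other root. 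Then $x_{n}' = bP - 2A - x_{n} \in \Z$ (this is the one place $b \in \Z$ is used), and $x_{n} x_{n}' = A^{2} > 0$ forces $x_{n}' \ge 1$; moreover $(A + x_{n}')^{2} = bPx_{n}'$, so $(x_{1}, \dotsc, x_{n-1}, x_{n}')$ is again a tuple of the required form with the same value of $b$. If $x_{n} > A$, then $x_{n}' = A^{2}/x_{n} < A < x_{n}$, so the new tuple has strictly smaller coordinate sum, contradicting minimality. Hence $x_{n} \le A$.

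\textbf{Step 2 (uniform estimate for balanced tuples).} It then suffices to prove: if $(x_{1}, \dotsc, x_{n}) \in \Zp^{n}$ satisfies $x_{1} \le \dotsb \le x_{n} \le A \deftobe x_{1} + \dotsb + x_{n-1}$, then $\bigparens{\sum_{i} x_{i}}^{2} / \prod_{i} x_{i} \le n^{2}$ — note the divisibility hypothesis is not needed here. I would set $\lambda \deftobe x_{n}/A$. Since $x_{n}$ is the largest coordinate, $x_{n} \ge \tfrac{1}{n}\bigparens{A + x_{n}}$, which rearranges to $\lambda \ge \tfrac{1}{n-1}$; and $x_{n} \le A$ gives $\lambda \le 1$. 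The function $h(\lambda) \deftobe (1 + \lambda)^{2}/\lambda = \lambda + 2 + \lambda^{-1}$ is decreasing on $[\tfrac{1}{n-1}, 1]$, so $h(\lambda) \le h\bigparens{\tfrac{1}{n-1}} = (n-1) + 2 + \tfrac{1}{n-1} = \tfrac{n^{2}}{n-1}$. Separately, $\tfrac{A}{P} = \sum_{i=1}^{n-1} \bigparens{\prod_{j \ne i} x_{j}}^{-1} \le n - 1$, since each of the $n-1$ summands is the reciprocal of a product of $n - 2$ positive integers and hence at most $1$. Therefore, writing $\sum_{i} x_{i} = A(1 + \lambda)$ and $\prod_{i} x_{i} = \lambda A P$,
\begin{equation*}
    \frac{\bigparens{\sum_{i} x_{i}}^{2}}{\prod_{i} x_{i}}
        = \frac{A}{P} \cdot \frac{(1 + \lambda)^{2}}{\lambda}
        \le (n-1) \cdot \frac{n^{2}}{n-1}
        = n^{2}.
\end{equation*}
Combining Steps 1 and 2 proves \cref{thm: number-theory lemma for general n}.

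I expect the main subtlety to lie not in the final estimate but in getting the Vieta bookkeeping of Step 1 exactly right: one must verify that the jumped coordinate $x_{n}'$ lands back in $\Zp$ (the sole use of $b \in \Z$) and that $b$ is unchanged, and one should note that taking a sum-minimizer sidesteps any explicit appeal to termination of the descent. The point worth emphasizing is that, unlike the somewhat ad hoc case analysis used for $n = 3$, the bound here drops out of just the two one-line inequalities $\lambda \ge \tfrac{1}{n-1}$ and $\tfrac{A}{P} \le n - 1$; the tuple $(1, \dotsc, 1)$ shows $n^{2}$ is best possible.
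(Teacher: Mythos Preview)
Your proof is correct and follows the same overall strategy as the paper's: Vieta-reduce to a tuple with $x_{n} \le A \deftobe x_{1} + \dotsb + x_{n-1}$, then finish with an elementary bound. The Vieta step is essentially identical (you package it as a sum-minimizer rather than an explicit descent, which is a cosmetic difference).

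Where you genuinely diverge is in Step~2. The paper bounds $b \le 2\,(A/P)\bigparens{\sum_{i} x_{i}/x_{n}}$ and then splits into the cases $x_{n-1} \ge 2$ versus $x_{1} = \dotsb = x_{n-1} = 1$, handling the latter by a separate direct computation. Your argument instead writes $b = (A/P)\cdot h(\lambda)$ with $\lambda = x_{n}/A$ and $h(\lambda) = (1+\lambda)^{2}/\lambda$, and uses the two one-line facts $A/P \le n-1$ and $h(\lambda) \le h\bigparens{\tfrac{1}{n-1}} = \tfrac{n^{2}}{n-1}$ (the latter coming from $\lambda \ge \tfrac{1}{n-1}$ and the monotonicity of $h$ on $(0,1]$). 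This cleanly eliminates the case split; the price is that your bound is exactly $n^{2}$ in all cases, whereas the paper's case analysis squeezes out the slightly sharper $b \le n^{2} - 1$ when $x_{1} = \dotsb = x_{n-1} = 1$ and $x_{n} \ge 2$. Since the theorem only asks for $b \le n^{2}$, your route is the tidier one.
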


\begin{proof} 
    Without loss of generality, $x_{1} \le \dotsb \le x_{n}$.  As in
    the proof of \cref{lem: number-theory lemma for triangular PRPs},
    we first use Vieta jumping to show that we may further assume
    without loss of generality that $x_{n} \le \sum_{i=1}^{n-1}
    x_{i}$.  For, \cref{eq:1} is equivalent to the condition that
    $x_{n}$ is a root of the polynomial $f(t) \in \Z[t]$ defined by
    \begin{equation*}
       f(t)
           \deftobe
           t^{2} 
           - 
           \parens{
               b \prod_{i=1}^{n-1} x_{i} 
               -
               2 \sum_{i=1}^{n-1} x_{i}
           } t 
           + 
           \parens{
               \sum_{i=1}^{n-1} x_{i}
           }^{2}.
    \end{equation*}
    Letting $x_{n}'$ be the other root of $f(t)$, we get from the
    integrality of the linear coefficient that $x_{n}' \in \Z$.
    Furthermore, from the positivity of the constant coefficient of
    $f(t)$, we get that $x_{n}x_{n}' =
    \smash{\bigparens{\sum_{i=1}^{n-1} x_{i}}^{2}} \ge 1$ and hence
    $x_{n}' \ge 1$.  (This is where we use the hypothesis that $n \ge
    2$.)  If $x_{n} \le x_{n}'$, then we already have that $x_{n} \le
    \sum_{i=1}^{n-1} x_{i}$.  Otherwise, we have a solution $(x_{1},
    \dotsc, x_{n-1}, x_{n}')$ to \cref{eq:1} with $1 \le x_{n}' <
    \sum_{i=1}^{n-1} x_{i} < x_{n}$.  In that case, reordering $x_{1},
    \dotsc, x_{n-1}, x_{n}'$ in increasing order, and then reapplying
    this reduction step a finite number of times, we eventually arrive
    at a solution of the desired form.

    Now, given a solution $(x_{1}, \dotsc, x_{n}) \in
    \smash{\Zp^{n}}$ to \cref{eq:1} with $x_{1} \le \dotsb \le x_{n}
    \le \sum_{i=1}^{n-1} x_{i}$, we find that
    \begin{align*}
       b
           &=%
           \frac{
               \bigparens{
                   \sum_{i=1}^{n} 
                   x_{i}
               }^{2}
           }{
               \prod_{i=1}^{n} x_{i}
           } 
           \le%
           2
           \frac{
               \bigparens{
                   \sum_{i=1}^{n-1} 
                   x_{i}
               }
               \bigparens{
                   \sum_{i=1}^{n}
                   x_{i}
               }
           }{
               \prod_{i=1}^{n} x_{i}
           } 
           \le %
           2
           \Biggparens{
               \sum_{i=1}^{n-1}
               \prod_{\substack{1 \le j \le n-1, \\ j \ne i}}
               \frac{
                   1
               }{
                   x_{j}
               }
           }
           \parens{
               \sum_{i=1}^{n} 
               \frac{x_{i}}{x_{n}}
           }.
    \end{align*}
    Thus, if $x_{n-1} \ge 2$, then $b \le 2 \bigparens{(n-2)
    \frac{1}{2} + 1}n = n^{2}$.  We may therefore suppose that $x_{1}
    = \dotsb = x_{n-1} = 1$.  If also $x_{n} = 1$, then the claim to be
    proved is immediate, so we suppose that $2 \le x_{n} \le \sum_{i =
    1}^{n-1} x_{i} = n-1$.  In this last case,
    \begin{equation*}
        b 
            =
            \frac{(n-1 + x_{n})^{2}}{x_{n}}
            = 
            (n-1 + x_{n}) \parens{\frac{n-1}{x_{n}} + 1} 
            \le
            2 (n-1) \parens{\frac{n-1}{2} + 1} 
            =
            n^{2} - 1. \qedhere
    \end{equation*}
\end{proof}


\providecommand{\bysame}{\leavevmode\hbox to3em{\hrulefill}\thinspace}
\providecommand{\MR}{\relax\ifhmode\unskip\space\fi MR }
\providecommand{\MRhref}[2]{%
  \href{http://www.ams.org/mathscinet-getitem?mr=#1}{#2}
}
\providecommand{\href}[2]{#2}

\end{document}